\newtheorem{theorem}{Theorem}[section]
\newtheorem{proposition}[theorem]{Proposition}
\newtheorem{lemma}[theorem]{Lemma}
\newtheorem{corollary}[theorem]{Corollary}
\theoremstyle{definition}
\newtheorem{remark}[theorem]{Remark}
\newtheorem{assumption}[theorem]{Assumption}
\newtheorem{definition}[theorem]{Definition}
\newcommand\restr[2]{{\left.\kern-\nulldelimiterspace #1 \vphantom{\big|} \right|_{#2}}}
\newcommand{\vertiii}[1]{{\left\vert\kern-0.25ex\left\vert\kern-0.25ex\left\vert #1\right\vert\kern-0.25ex\right\vert\kern-0.25ex\right\vert}}
\newcommand\abs[1]{\left|#1\right|}
\newcommand\absb[1]{\Big|#1\Big|}
\let\div\undefined
\DeclareMathOperator{\div}{div}
\DeclareMathOperator{\dom}{dom}
\newcommand{\mres}{\mathbin{\vrule height 1.6ex depth 0pt width
0.13ex\vrule height 0.13ex depth 0pt width 1.3ex}}
\DeclareMathAlphabet{\pazocal}{OMS}{zplm}{m}{n}
\newcommand{\R}{\mathbb{R}}
\newcommand{\Cb}{\pazocal{C}}
\newcommand{\Lb}{\pazocal{L}}
\newcommand{\Fb}{\pazocal{F}}
\newcommand{\Hb}{\pazocal{H}}
\newcommand{\Gb}{\pazocal{G}}
\newcommand{\Vb}{\pazocal{V}}
\newcommand{\Wb}{\pazocal{W}}
\newcommand{\Mp}{\pazocal{M}}
\newcommand{\Mb}{\mathfrak{M}}
\newcommand{\Nb}{\mathfrak{N}}
\DeclareMathOperator{\dd}{d\!}
\newcommand{\me}{\mathrm{e}}
\DeclareMathOperator{\dist}{dist}
\numberwithin{equation}{section}
\definecolor{Mygray}{rgb}{0.85,0.85,0.85}
\definecolor{green2}{rgb}{0,0.7,0.2}	
\definecolor{purple2}{rgb}{0.53,0.2,1}	
\definecolor{brown}{rgb}{0.65,0.25,0.12}
\title{Weak formulations of the nonlinear Poisson-Boltzmann equation in biomolecular electrostatics\footnotetext{2020 Mathematics Subject Classification: 35J61, 35D30, 35R06, 92C05.}}
\author{
  Jos\'{e} A. Iglesias\thanks{Johann Radon Institute for Computational and Applied Mathematics (RICAM), Austrian Academy of Sciences, Altenbergerstra{\ss}e 69, 4040 Linz, Austria (\texttt{jose.iglesias@ricam.oeaw.ac.at}).}
  \and
Svetoslav Nakov\thanks{Institute for Theoretical Physics, Johannes Kepler University, Altenbergerstra{\ss}e 69, 4040 Linz, Austria (\texttt{svetoslav.nakov@jku.at}).}}
\date{}                     %% if you don't need date to appear
\begin{document}
\maketitle
% \begin{frontmatter}
% 
% \title{Weak formulations of the nonlinear Poisson-Boltzmann equation in biomolecular electrostatics\footnotetext{2020 Mathematics Subject Classification: 35J61, 35D30, 35R06, 92C05.}}
% \author[RICAM]{Jos\'{e} A. Iglesias}
% \address[RICAM]{Johann Radon Institute for Computational and Applied Mathematics (RICAM), Austrian Academy of Sciences, Linz, Austria}
% \author[JKU]{Svetoslav Nakov\corref{correspondingauthor}}
% \address[JKU]{Johannes Kepler University, Linz, Austria}
% 
% \cortext[correspondingauthor]{Corresponding author: svetoslav.nakov@jku.at}

% \title{Weak formulations of the nonlinear Poisson-Boltzmann equation in biomolecular electrostatics\footnotetext{2020 Mathematics Subject Classification: 35J61, 35D30, 35R06, 92C05.}}
% \author{
%   Jos\'{e} A. Iglesias\thanks{Johann Radon Institute for Computational and Applied Mathematics (RICAM), Austrian Academy of Sciences, Linz, Austria (\texttt{jose.iglesias@ricam.oeaw.ac.at}).}
%   \and
% Svetoslav Nakov\thanks{Johannes Kepler University, Linz, Austria (\texttt{svetoslav.nakov@jku.at}).}}
% 
% \date{}
% \maketitle
% \thispagestyle{empty}

\begin{abstract}
We consider the nonlinear Poisson-Boltzmann equation in the context of electrostatic models for a biological macromolecule, embedded in a bounded domain containing a solution of an arbitrary number of ionic species which is not necessarily charge neutral. The resulting semilinear elliptic equation combines several difficulties: exponential growth and lack of sign preservation in the nonlinearity accounting for ion mobility, measure data arising from point charges inside the molecule, and discontinuous permittivities across the molecule boundary. Exploiting the modelling assumption that the point sources and the nonlinearity are active on disjoint parts of the domain, one can use a linear decomposition of the potential into regular and singular components. A variational argument can be used for the regular part, but the unbounded nonlinearity makes the corresponding functional not differentiable in Sobolev spaces. By proving boundedness of minimizers, these are related to standard $H^1$ weak formulations for the regular component and in the framework of Boccardo and Gallou\"et for the full potential. Finally, a result of uniqueness of this type of weak solutions for more general semilinear problems with measure data validates the strategy, since the different decompositions and test spaces considered must then lead to the same solution.
%%%%
\end{abstract}

% \begin{keyword}
% Poisson-Boltzmann equation, semilinear elliptic equations, equations with measure data, existence and uniqueness, weak formulation, no sign condition
% \end{keyword}

\textbf{Keywords}:
Poisson-Boltzmann equation, semilinear elliptic equations, equations with measure data, existence and uniqueness, weak formulation, no sign condition

\section{Introduction}
Detailed studies of biomolecular electrostatics provide a tool for the rational design and optimization in diverse fields such as biocatalysis, antibody and nanobody engineering, drug composition and delivery, molecular virology, and nanotechnology \cite{Protein_electrostatics_review_2020}. A commonly accepted and widely used approach is based on solving the nonlinear Poisson-Boltzmann equation (PBE) which provides a mean field description of the electrostatic potential in a system of biological macromolecules immersed in aqueous solution, such as water. In this model, the solvated molecule is represented at an atomic level of detail in a molecule-shaped cavity with a low dielectric permittivity and point partial charges at atomic positions, whereas the water molecules and ions in the solvent are implicitly treated and accounted for by an isotropic dielectric continuum with high dielectric permittivity \cite{Fogolari_Brigo_Molinari_2002}. A further simplification would be to linearize the ionic contributions. However, in that case the resulting description is not accurate enough when the biomolecules are highly charged, as is the case for DNA, RNA and phospholipid membranes such as polylysine \cite{Honig_Nicholls_95}.

For these reasons, there is considerable interest in the nonlinear PBE in the scientific computing and biophysics community. Since it is an equation combining a strong nonlinearity and measure data, the existence and uniqueness of solutions for it, while mostly within the scope of known PDE techniques, is not quite trivial. However in the applications literature these are often assumed as standard folklore or insufficiently justified while possibly using weak formulations that are not rigorously appropriate for equations with measure data. A common approach is to consider the natural energy associated to the problem, which is convex, and applying variational arguments. However, this should also be treated with care, since the nonlinearity of the PBE is strong enough to prevent the energy functional from being differentiable in Sobolev spaces, so the Euler-Lagrange equation does not necessarily hold. 

\subsection*{Contributions}
In this work, we provide a complete treatment for the existence and uniqueness of weak solutions of the nonlinear Poisson-Boltzmann equation when applied to electrostatic models for biological macromolecules, and without assuming that the solvent is necessarily charge neutral. Our methods are centered on the spaces where elliptic PDE with measure right hand side can be rigorously formulated, which are not directly compatible with a variational approach, and on which we prove uniqueness of solutions. For existence, out of the possible approaches to show particular solutions, we focus on the additive solution-splitting techniques which are most explicit and most commonly used in the applications literature. These still involve an energy functional which is not differentiable and discontinuous at every point of its domain (see Section \ref{Sections_remarks_on_J}). However, by proving an a priori estimate for boundedness of minimizers we are able to return to standard $H^1$ weak formulations which can be directly approximated numerically.

The insistence on the molecular setting is not gratuitous: our analysis specifically uses the particularities of this setting, in which the nonlinearity and right hand side are active on disjoint parts of the domain. This separation plays a role in the main existence result in Theorem \ref{Theorem_Existence_for_full_potential_GPBE} by allowing us to decompose the full potential into a regular component that satisfies an elliptic equation with a more regular right hand side in $H^{-1}$, and another term representing the contribution of the point charges through the Newtonian potential. The main assumption to be able to perform this decomposition is that the dielectric permittivity is constant in a neighborhood of the point charges. 

A notable feature of our approach is that we treat weak formulations for the complete nonlinear PBE in the framework of weak solutions for elliptic equations with measure right hand side as defined by Boccardo and Gallou\"et in \cite{Boccardo_Gallouet_1989, Boccardo_Gallouet_Nonlinear_equations_with_RHS_measures_1992} and maintain this unified framework throughout. The other rigorous works that we are aware of treating the biomolecular situation for the PBE use the mentioned type of decompositions as a fixed ansatz, whereas we use it as a way to obtain particular solutions of the general formulation, for which we prove a uniqueness result that will cover any such approach. A prominent such work is \cite{BoLi_paper_2009} using a variational perspective, and where charge neutrality is required (see in particular \cite{BoLiErratum2011}). Since the decomposition is fixed a priori, it uses ad-hoc spaces which can be roughly described as ``$W^{1,1}$ around the point charges, but $H^1$ elsewhere'', whereas we work in the sharp spaces for problems with measure data, or ``in dimension $d$, $W^{1,\frac{d}{d-1}-\varepsilon}$ for all $\varepsilon >0$'', and we assume the interface to be just $C^1$ instead of $C^2$.

The assumption that the interface between the molecular and solvent regions is $C^1$ plays no role in the existence, but is required for our uniqueness result in Theorem \ref{Theorem_uniqueness_for_GPBE}. We will justify below that this assumption is often satisfied for a very common interpretation of the molecular geometries, which cannot be expected to be $C^2$.

Further, for the regular component we treat weak formulations with Sobolev test spaces instead of just minimizers or distributional solutions, and adopt decomposition schemes commonly used in the physical and numerical literature, proving their equivalence through our uniqueness result. The fact that the regular component of a solution obtained by such a decomposition satisfies a weak formulation involving $H^1$ spaces means that this component can be numerically approximated by means of well studied methods, such as standard conforming finite elements. Besides, it also means that the duality approach for error estimation is applicable to obtain both a priori near-best approximation results and to compute guaranteed a posteriori error bounds, as done in \cite{Kraus_Nakov_Repin_PBE1_2018, Kraus_Nakov_Repin_PBE2_2020}. Having a $C^1$ interface also has practical implications, since in this case it is easier to represent exactly with curved elements or isogeometric analysis.

Moreover, a boundedness estimate for the regular component of the potential, as proved in Theorem \ref{Thm_Boundedness_of_solution_to_general_semilinear_elliptic_problem}, has physical implications in its own right. A growing body of literature, starting with \cite{Borukhov_Andelman_Orland_1997}, treats modifications of the Poisson-Boltzmann model with more tame nonlinearities (reflecting finite size ions) on the grounds that the original PBE model may produce unphysically high ion concentrations and potentials. A boundedness estimate for the original implicit-solvent PBE puts some theoretical limits to these concerns, on the level of potentials outside the molecule.

\subsection*{Organization of this paper}
After introducing the general PBE and its linearized version in their physical context, in Section \ref{Section_Setting} we introduce the notion of weak solutions we will work with and provide an existence and uniqueness result for them in the linearized setting. In Section \ref{Section_Splitting} we review two natural linear splittings of solutions, either of which can be used to decouple the contributions of the nonlinearity and of the measure data. Section \ref{Section_Existence_and_uniqueness_of_solution} contains the main results: in Section \ref{Section_Existence_GPBE} we prove existence of weak solutions through a variational argument and boundedness estimate, and Section \ref{Section_uniqueness_for_GPBE} treats uniqueness.

\subsection{Physical formulation}\label{Section_Physics}
We study an interface problem modelling a biological system consisting of a (macro) molecule embedded in an aqueous solution, e.g., saline water. These are embedded in a bounded computational domain $\Omega\subset \mathbb R^d$ with $d\in \{2, 3\}$. The part of it  containing the molecule is denoted by $\Omega_m\Subset \Omega\subset\mathbb R^d$ (see Figure \ref{All_regions_2D_and_3D}) and the one containing the solution with the moving ions is denoted by $\Omega_s$ and defined by $\Omega_s=\Omega\setminus\overline{\Omega_m}$. The interface of $\Omega_m$ and $\Omega_s$ is denoted by $\Gamma=\overline {\Omega_m}\cap\overline{\Omega_s}=\partial\Omega_m$, and the outward (with respect to $\Omega_m$) unit normal vector on $\partial \Omega_m$ by $\bm n_\Gamma$. Usually, the molecular region $\Omega_m$ is prescribed a low dielectric coefficient $\epsilon_m\approx 2$, whereas the solvent region $\Omega_s$ is prescribed high dielectric coefficient $\epsilon_s\approx 80$. We will assume that the function $\epsilon$,  describing the dielectric coefficient in $\Omega$, is constant in the molecule region $\Omega_m$ and Lipschitz continuous in the solvent region $\overline{\Omega_s}$ with a possible jump discontinuity across the interface $\Gamma$, i.e.,
\begin{equation} \label{definition_of_piecewise_constant_epsilon_LPBE}
\epsilon(x)=\left\{
\begin{aligned}
&\epsilon_m, &x&\in \Omega_m,\\
&\epsilon_s(x), &x&\in\Omega_s.
\end{aligned}
\right.
\end{equation} 
We note that in presence of moving ions, more refined models include the so-called ion exclusion layer (IEL). This is a region in which no ions can penetrate and which surrounds the bio-molecules. It is denoted by $\Omega_{IEL}$ and the part of $\Omega_s$ accessible for ions is denoted by $\Omega_{ions}=\Omega_s\setminus\overline{\Omega_{IEL}}$. With this notation, we have $\Omega_s=\big(\overline{\Omega_{IEL}}\setminus\Gamma\big)\cup\Omega_{ions}$ (see Figure \ref{All_regions_2D_and_3D}). We remark that considering this region is optional (indeed many works  do not use it), in which case one may think of only two regions $\Omega_m$ and $\Omega_{ions}=\Omega_s$ and the notation in some of our results below would be simpler. We give precise mathematical definitions for these sets in Section \ref{Section_Molecular_Surface}.
\begin{figure}[!ht]
    \centering
      \includegraphics[width=0.47\linewidth, valign=m]{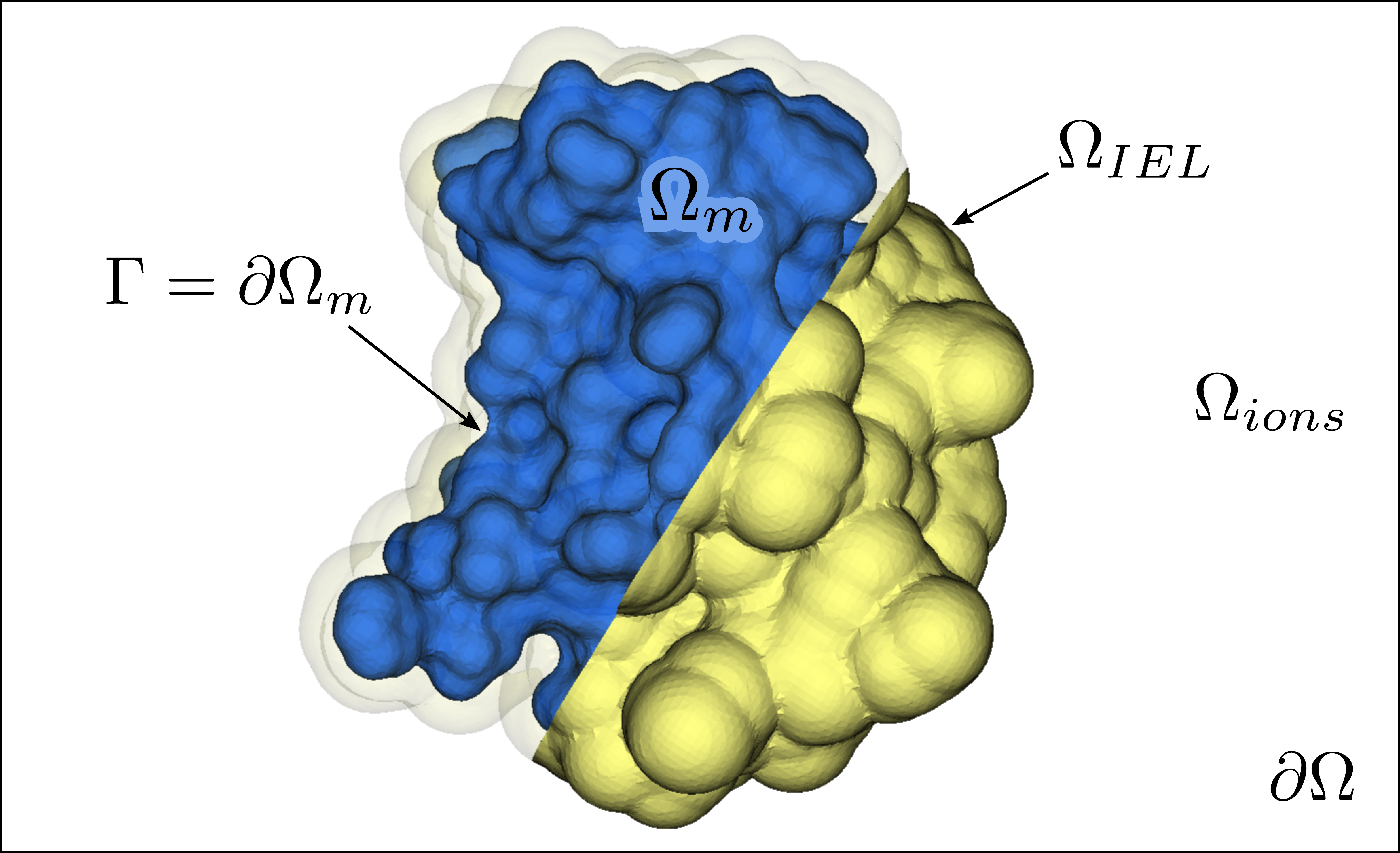}
\caption{Computational domain $\Omega$ with molecular domain $\Omega_m$ in blue, ion exclusion layer $\Omega_{IEL}$ in yellow and see-through, and ionic domain $\Omega_{ions}$. These domains were constructed from an insulin protein with the procedures described in Section \ref{Section_Molecular_Surface}.
  \label{All_regions_2D_and_3D}}
\end{figure}

The electrostatic potential $\hat{\phi}$ is governed by the Poisson equation  which is derived from Gauss's law of electrostatics. In CGS (centimeter-gram-second) units, the Poisson equation reads
\begin{subequations}\label{general_form_PBE}
\begin{equation} \label{general_PBE_1_classical_form}
-\nabla\cdot\big(\epsilon\nabla \hat{\phi}\big)=4\pi \rho  \quad\text{ in } \Omega_m \cup \Omega_s.
\end{equation} 

Here, $\rho:=\chi_{\Omega_m}\rho_m+\left(\chi_{\Omega_{IEL}}\rho_{IEL}+\chi_{\Omega_{ions}}\rho_{ions}\right)$ denotes the charge density in $\Omega$, where $\rho_m$, $\rho_{IEL}$, and $\rho_{ions}$ are the charge densities\footnote{Since $\Omega_s=\left(\overline{\Omega_{IEL}}\setminus\Gamma\right)\cup \Omega_{ions}$, then $\chi_{\Omega_{IEL}}\rho_{IEL}+\chi_{\Omega_{ions}}\rho_{ions}$ gives the charge density in $\Omega_s$.} in $\Omega_m$, $\Omega_{IEL}$, and $\Omega_{ions}$, respectively, and $\chi_U$ denotes the characteristic function of the set $U$, defined by $\chi_U(x)=1$ if $x\in U$ and $\chi_U(x)=0$ elsewhere. In the molecular region $\Omega_m$, there are only fixed partial charges so the charge density is 
\[
\rho_m=\sum_{i=1}^{N_m}{z_ie_0\delta_{x_i}},
\]
where $N_m$ is the number of fixed partial charges, $z_i$ is the valency of the $i$-th partial charge, $x_i \in \Omega_m$ its position, and $e_0$ is the elementary charge, and $\delta_{x_i}$ denotes the delta function centered at $x_i$.  In the region $\Omega_{IEL}$ there are no fixed partial charges, nor moving ions and therefore the charge density there is $\rho_{IEL}=0$. 
In the region $\Omega_{ions}$, there are moving ions whose charge density is assumed to follow a Boltzmann distribution and is given by
\[
\rho_{ions}=\sum_{j=1}^{N_{ions}}{M_j\xi_j e_0\me^{-\frac{\xi_je_0\widehat{\phi}}{k_BT}}},
\]
where $N_{ions}$ is the number of different ion species in the solvent, $\xi_j$ is the valency of the $j$-th ion species, $M_j$ is its average concentration in $\Omega_{ions}$ measured in ${\# \textrm{ions}}/{\textrm{cm}^3}$,  $k_B$ is the Boltzmann constant, and $T$ is the absolute temperature. For more information on the physical constants used in the text see Table~\ref{Table_physical_constants_in_CGS}.

\begin{table}[ht]
%\small
\centering
%\vspace{1ex}
\begin{tabular}{ |c|c|c| }
\hline
% & &  \\
abbreviation &  name & value in CGS derived units\\
\hline \hline
$N_A$		& Avogadro's number		&$6.022140857 \times 10^{23}$ \\ 
$e_0$		& elementary charge			&$4.8032424\times 10^{-10}$ esu	\\
$k_B$		& Boltzmann's constant		&$1.38064852\times 10^{-16}$erg $\text{K}^{-1}$\\
\hline			
\end{tabular}
\captionof{table}{Physical constants used in this section. Here K denotes Kelvin as a unit of temperature, esu is the statcoulomb unit of electric charge, and erg the unit of energy which equals $10^{-7}$ joules.}\label{Table_physical_constants_in_CGS} 
\end{table}

The physical problem requires that the potential $\hat{\phi}$ and the normal component of the displacement field $\epsilon\nabla \hat{\phi}$ are continuous across the interface $\Gamma$. Thus, the equation \eqref{general_PBE_1_classical_form} is supplemented with the following continuity conditions
\begin{align}
\big[\hat{\phi}\big]_\Gamma&=0,\label{general_PBE_2_classical_form}\\
\big[\epsilon\nabla \hat{\phi}\cdot \bm n_\Gamma\big]_\Gamma&=0,\label{general_PBE_3_classical_form}
\end{align}
where $\left[\cdot\right]_\Gamma$ denotes the jump across the interface $\Gamma$ of the enclosed quantity.  Finally, the system \eqref{general_PBE_1_classical_form}, \eqref{general_PBE_2_classical_form}, \eqref{general_PBE_3_classical_form} is complemented with the boundary condition
\begin{equation} \label{general_PBE_4_classical_form}
	\hat{\phi}=\hat{g}_\Omega \quad\text{ on } \partial\Omega.
\end{equation} 
We notice that in fact the physical problem prescribes a vanishing potential at infinity, that is $\hat\phi(x)\to 0$ as $\abs{x}\to\infty$. However in most practical situations one uses a bounded computational domain and imposes the Dirichlet boundary condition \eqref{general_PBE_4_classical_form} instead. In this case, the function $\hat{g}_\Omega$ can be prescribed using the exact solution of a simpler problem in the full $\R^d$ for the linearized equation with constant solvent permittivity, which can be expressed explicitly through Green functions (see Eq. (5) of \cite{Bond_2009} or \cite{Zhou_Payne_Vasquez_Kuhn_Levitt_1996}, for example).
\end{subequations}

By introducing the new functions $\phi=(e_0\hat{\phi})/(k_B T)$ and $g_\Omega=(e_0 \hat{g}_\Omega)/(k_B T)$ equations \eqref{general_PBE_1_classical_form}--\eqref{general_PBE_4_classical_form} can be written in distributional sense in terms of the dimensionless potential $\phi$:
\begin{equation} \label{GPBE_dimensionless}
\begin{aligned}
-\nabla\cdot\left(\epsilon\nabla \phi\right)+b(x,\phi)&=\Fb \quad \text{ in } \Omega,\\
\phi&=g_\Omega\quad\text{ on } \partial\Omega,
\end{aligned}
\end{equation} 
where $b(x,t):\Omega\times \mathbb R\to\mathbb R$ is defined by
\begin{equation} \label{definition_b_GPBE}
b(x,t):=-\frac{4\pi e_0^2}{k_BT} \sum_{j=1}^{N_{ions}}{\overline M_j(x)\xi_j \me^{-\xi_j t}} \text{ for all } x\in\Omega,\,t\in\mathbb R
\end{equation} 
with $\overline M_j(x):=\chi_{\Omega_{ions}}M_j$ and 
\begin{equation}\label{definition_F} 
\Fb:=\frac{4\pi e_0^2}{k_BT}\sum_{i=1}^{N_m}{z_i\delta_{x_i}}.
\end{equation} 
Observe that if the condition
\begin{equation} \label{charge_neutrality_condition}
\sum_{j=1}^{N_{ions}}{M_j\xi_j}=0
\end{equation} 
holds, then the solvent is electroneutral and we refer to this as the {\it charge neutrality condition}. Obviously, in this case we have that $b(x,0)=0$ for all $x\in\Omega$. This is a quite standard assumption, which we do \emph{not} enforce for our analytical results. In nearly all biophysics models involving the PBE the solvent is charge neutral, but there are also some exceptions. One such is the so-called cell model (see Eq. (17) in \cite{Sharp_Honig_1990}) in which the macromolecule possesses a net charge, and exactly enough counterions of only one species are present to keep the volume to which all the charges are confined globally electrically neutral.
 
We notice as well that $b(x_i,t)=0$ at the positions $x_i$ of the point charges in the molecular region. More precisely, $b(x,t)=0$ for a.e. $x\in \Omega_m\cup\Omega_{IEL}$. This observation will be crucial to our analysis below.
%%%%%%%%%%%%%%%%%%%%%%%%%%%%%%%%%%%%%%%%%%%%%%%%%%%%%%%%

Under the assumption that there are only two ion species in the solution with the same concentration $M_1=M_2=M$, which are univalent but with opposite charge, i.e $\xi_j=(-1)^j,\,j=1,2$, we obtain the equation
\begin{equation} \label{PBE_dimensionless}
\begin{aligned}
-\nabla\cdot\left(\epsilon \nabla \phi\right)+\overline{k}^2 \sinh\left(\phi \right)&=\Fb \quad\text{ in } \Omega,\\
	%\left[\phi\right]_\Gamma&=&0,\label{PBE_dimensionless_2}\\
	%\left[\epsilon\nabla \phi\cdot \bm n_\Gamma\right]_\Gamma&=&0,\label{PBE_dimensionless_3}\\
	\phi&=g_\Omega \quad\text{ on } \partial\Omega .
	\end{aligned}
\end{equation} 
The coefficient $\overline{k}$ is defined by
\begin{equation} \label{definition_of_piecewise_constant_k}
\overline{k}^2(x)=\left\{
\begin{aligned}
&0, &x&\in \Omega_m\cup\Omega_{IEL},\\
&\overline{k}_{ions}^2=\frac{8\pi N_A e_0^2I_s}{1000k_BT}, &x&\in \Omega_{ions},
\end{aligned}
\right.
\end{equation} 
where $N_A$ is Avogadro's number and the ionic strength $I_s$, measured in moles per liter (molar), is given by
\[
I_s=\frac{1}{2}\sum_{j=1}^{2}{c_i \xi_j^2}=\frac{1000M}{N_A}
\]
with $c_1=c_2=\frac{1000M}{N_A}$, the average molar concentration of each ion (see \cite{Niedermeier_Schulten_1992, Bashford_2004}). 

Equation \eqref{PBE_dimensionless} is often referred to as the Poisson-Boltzmann equation \cite{Sharp_Honig_1990, Oberoi_Allewell_1993,Holst2012}. On the other hand, we will refer to \eqref{GPBE_dimensionless} as the General Poisson-Boltzmann equation (GPBE).
The GPBE \eqref{GPBE_dimensionless} can be linearized by expanding $b(x,\cdot)$ in Maclaurin series. We obtain the linearized GPBE (LGPBE) equation for the dimensionless potential $\phi$:
\begin{equation} \label{LGPBE_dimensionless}
\begin{aligned}
-\nabla\cdot\left(\epsilon\nabla \phi\right)+\overline{m}^2\phi &=\Fb + \ell \quad\text{ in } \Omega,\\
	%\left[\phi\right]_\Gamma&=&0,\label{LGPBE_dimensionless_2}\\
	%\left[\epsilon\nabla \phi\cdot \bm n_\Gamma\right]_\Gamma&=&0,\label{LGPBE_dimensionless_3}\\
	\phi&=g_\Omega \quad\text{ on } \partial\Omega ,
	\end{aligned}
\end{equation} 
	where 
	\begin{equation} \label{definition_overline_m_and_l}
	\overline m^2(x):=\sum_{j=1}^{N_{ions}}{\overline M_j(x)\xi_j^2} \quad\text{ and }\quad \ell(x):=\sum_{j=1}^{N_{ions}}{\overline M_j(x) \xi_j}.
	\end{equation} 
	
\subsection{The molecular surface}\label{Section_Molecular_Surface}
As we are working with a mesoscopic model the subdomains appearing have to be precisely defined, and there is not necessarily only one way to do so; in fact, even the representation with sharp cutoffs is a modelling assumption. A common starting point is to consider the molecule as occupying the van der Waals set $\Vb$ (with $\partial \Vb$ the corresponding surface) which is given as a union of spheres centered at the positions where the atoms can be located and with radius the van der Waals radius of each element.

Since we are interested in the interaction with an ionic solution, our molecule boundary should be even larger than the van der Waals set, and account for the regions that cannot be accessed by the solvent. This is known as the solvent excluded surface (SES) which is formed by rolling a solvent probe modelled as a sphere on the van der Waals surface (see \cite{Lee_Richards_1971, Richards_SES_1977, Greer_Bush_1978}). A popular precise definition  is that of Connolly \cite{Connolly_Analytic_Description_1983} in which the SES is taken to be the surface traced by the boundary of the solvent sphere, which we now describe mathematically.

\begin{figure}[!ht]
    \centering    
      \includegraphics[width=0.38\linewidth, valign=m]{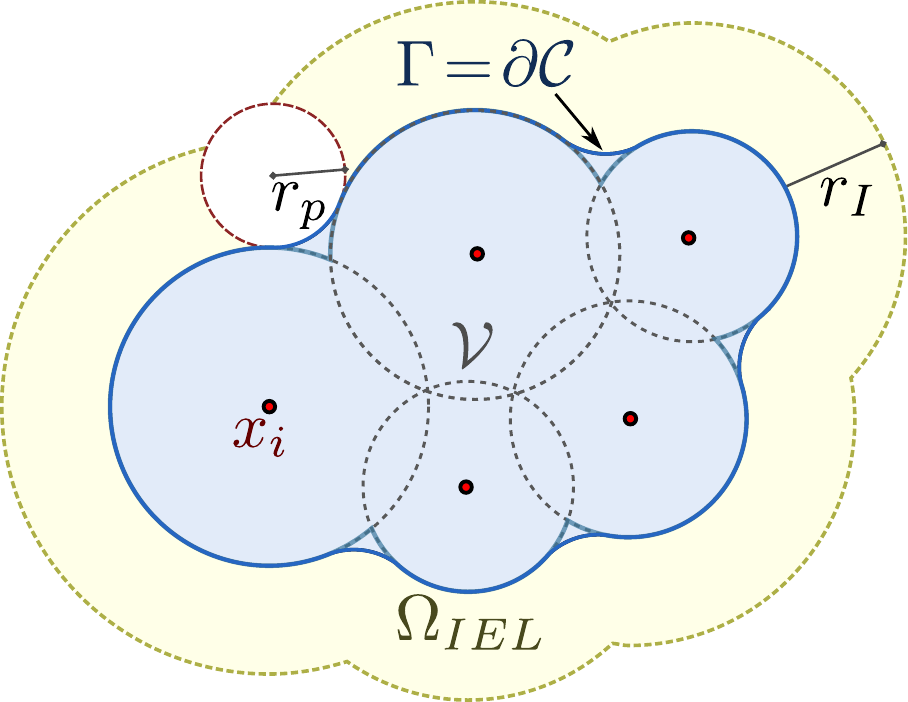} 
\caption{Construction of the Connolly surface $\partial \Cb$ from the van der Waals set $\Vb$ (a union of balls centered at the positions of the charges $x_i$) by rolling a spherical probe of radius $r_p$, and construction of $\Omega_{IEL}$ enlarging $\Vb$ by $r_I$.
  \label{Molecular_Surfaces}}
\end{figure}

Given an open set $\Sigma \subset \R^d$ and radius $r>0$ we can define the set generated by ``rolling a ball'' of radius $r$ inside it or outside it as
\[[\Sigma]_r := \bigcup_{B(x,r) \subset \Sigma} B(x,r)\ \text{ and }\ [\Sigma]^r := \mathrm{int}\left( \R^d \setminus \big[\R^d \setminus \Sigma\big]_r\right),\]
where $\mathrm{int}$ denotes the interior so that both remain open. Moreover, whenever $[\Sigma]_r = \Sigma$ or $[\Sigma]^r = \Sigma$ we say that a ball of radius $r$ can roll freely inside it or outside it, respectively.

With this, given a van der Waals set $\Vb$ and the van der Waals radius $r_p$ of a probe solvent molecule we can define the Connolly set $\Cb := [\Vb]^{r_p}$ with $\partial \Cb$ the Connolly surface (see Figure \ref{Molecular_Surfaces}). Now even if $\Vb$ is a union of balls, so clearly we can roll a ball inside it with the smallest radius and $[\Vb]_{r_{\Vb}}=\Vb$ for some $r_{\Vb}$, this is not necessarily the case for $\partial \Cb$ (see Section 3 in \cite{Walther_1999} for a counterexample). However if additionally $[\Cb]_{r_0} = [\Cb]$ for some $r_0$, then we have that \[[\Cb]_{\min(r_0, r_p)} = [\Cb]^{\min(r_0, r_p)}=\Cb\]
and in this situation we can apply Theorem 1 of \cite{Walther_1999} to conclude that $\partial \Cb \in C^{1,1}$. Intuitively, the condition $[\Cb]_{r_0} = [\Cb]$ will be satisfied when neither $\Vb$ nor $\R^d \setminus \Vb$ contain passages which are thinner than $2r_0$.

Under these assumptions, if one chooses $\Omega_m = \Cb$ then $\partial \Omega_m$ is in particular $C^1$, a condition which will be needed for all our uniqueness results below (cf. the proofs of Theorem~\ref{Theorem_uniqueness_of_linear_elliptic_problems_with_diffusion_and_measure_rhs} and Theorem~\ref{Theorem_uniqueness_for_GPBE}), but for none of the existence ones. We also remark that the Connolly surface we have just described is never $C^2$: since it is a union of pieces of spheres, the curvature of $\partial C$ jumps along their intersections.

Above we have introduced the ion exclusion layer around the molecular region which no ions can penetrate. A commonly used definition is to enlarge every ball of the van der Waals set $\Vb$ by the van der Waals radius $r_I$ of a probe ion molecule. If $r_I>r_p$, we can write (see Figure~\ref{All_regions_2D_and_3D})
\begin{equation}
\Omega_{IEL} = \left\{ x \in \Omega \, :\, \dist(x\, , \, \Vb) < r_I\right\}\setminus \overline{\Omega_m}.\end{equation}
The regularity of the outer boundary $\partial\Omega_{IEL}\setminus\Gamma$ of $\Omega_{IEL}$ plays no role in our analysis below.

\section{Functional analytic setting}\label{Section_Setting}
Our first goal is to give a meaningful notion of a weak solution to the problems
GPBE \eqref{GPBE_dimensionless} and the LGPBE \eqref{LGPBE_dimensionless}, which ultimately will ensure uniqueness. The semilinear elliptic equation \eqref{GPBE_dimensionless} combines several features that significantly complicate its treatment: a discontinuous dielectric coefficient $\epsilon$, a measure right hand side $\Fb$ defined in \eqref{definition_F}, and an unbounded nonlinearity $b(x,\cdot)$ defined in \eqref{definition_b_GPBE}. Before we get into the solution theory of \eqref{LGPBE_dimensionless} and \eqref{GPBE_dimensionless}, we introduce some notation concerning the function spaces that will be used.

\begin{assumption}[Domain and permittivity]\label{Assumption_Domain_permittivity}
The domain $\Omega$ is assumed to be open bounded and with Lipschitz boundary $\partial \Omega$, whose outward unit normal vector exists almost everywhere (with respect to the area measure) and is denoted by $\bm n_{\partial\Omega}$. The molecule subdomain $\Omega_m$ is strictly inside $\Omega$, i.e., $\overline \Omega_m \subset \Omega$, and the interface $\Gamma=\partial \Omega_m$ is assumed to be $C^1$. We assume that the boundary data $g_\Omega$ is globally Lipschitz on the boundary, that is $g_\Omega\in C^{0,1}(\partial\Omega)$. Moreover, the dielectric permittivity $\epsilon$ is assumed\footnote{Treating anisotropic permittivities (when $\epsilon$ is a symmetric matrix) is straightforward with the same methods used, as long as all these assumptions are satisfied.} constant in $\Omega_m$, equal to $\epsilon_m$, variable in $\Omega_s$ such that $\epsilon_s\in C^{0,1}(\overline{\Omega_s})$, and is allowed to have a jump discontinuity across the interface $\Gamma$. %Recall that the nonlinearity $b$ and the right-hand side $\Fb$ are defined by \eqref{definition_b_GPBE} and \eqref{definition_F}, respectively.
\end{assumption}

For $1\leq p < \infty$, the standard Sobolev space $W^{1,p}(\Omega)$ consists of functions which together with their first order weak partial derivatives lie in the Lebesgue space $L^p(\Omega)$. The subspace $W_0^{1,p}(\Omega)$ for $1\leq p<\infty$ denotes the closure of all smooth functions with compact support, $C_c^\infty(\Omega)$, in $\Omega$ with respect to the strong topology of $W^{1,p}(\Omega)$. Given $g \in W^{1-1/p, p}(\partial \Omega)$ and recalling (see Theorem 18.34 in \cite{Leoni_2017}) that the trace operator of $W^{1,p}(\Omega)$ denoted by $\gamma_p$ is surjective onto this space we define the set
\[W_g^{1,p}(\Omega):=\{v\in W^{1,p}(\Omega) \,:\,  \gamma_p(v)=g\}.\]
By $\langle \cdot , \cdot \rangle$ we denote the duality pairing in $W^{-1,p'}(\Omega)\times W_0^{1,p}(\Omega)$ for some $1\leq p<\infty$, where $W^{-1,p'}(\Omega)$ denotes the dual space of $W_0^{1,p}(\Omega)$. In particular, we will also use this notation for the action of measures considered as elements $W^{-1,p'}(\Omega)$ for $p>d$, taking into account the Sobolev embedding into continuous functions.

Whenever $p=2$ we also use the standard notation $H^1(\Omega)=W^{1,2}(\Omega)$ and analogously for $H^1_0(\Omega)$, the trace space $H^{1/2}(\partial \Omega)$ and its dual $H^{-1/2}(\partial \Omega)$. For a vector valued function $\bm\psi\in \left[C^\infty\big(\overline\Omega\big)\right]^d$, the evaluation of the normal trace operator $\gamma_n$ is defined almost everywhere on $\partial\Omega$ as the restriction of $\bm\psi\cdot \bm n_{\partial\Omega} \text{ to }\partial\Omega$. It is well known that the mapping $\gamma_n$,  can be extended by continuity to a continuous linear operator from $H(\div;\Omega)$ onto $H^{-1/2}(\partial\Omega)$, which we still denote by $\gamma_n$ (see, e.g., Theorem 2 in Section 1.3 of \cite{Dautray_Lions_Volume_6}).

We would like to handle elliptic equations with measures as right hand side. In view of the Riesz representation theorem (see Theorem B.111 in \cite{Leoni_2017}) that tells us that the Banach space of bounded signed Radon measures is given as 
\begin{equation} \label{definition_C_0_overline_Omega}
\Mp(\Omega):=\left(C_0(\overline\Omega)\right)^*\ \text{ with }\ C_0(\overline \Omega):=\{v\in C(\overline \Omega)\,:\, v=0 \text{ on } \partial\Omega\},
\end{equation} 
we should test weak formulations for such equations only with continuous functions. The Morrey-Sobolev inequality then suggests that it is natural to introduce 
\begin{equation}\label{Definition_M_and_N}
\Mb:=\bigcap_{p<\frac{d}{d-1}}{W^{1,p}(\Omega)} \quad\text{ and }\quad \Nb:=\bigcup_{q>d}{W_0^{1,q}(\Omega)},
\end{equation} 
where we note that not only $\Mb$ but also $\Nb$ is a linear space, since $\Omega$ is bounded and the spaces $W_0^{1,q}(\Omega)$ are nested. The following lemma is easy to check (see Exercises 9.12 and 11.50 in \cite{Leoni_2017}).
\begin{lemma}\label{Lemma_extension_pf_Lipschitz}
Let $g\in C^{0,1}(\partial\Omega)$. Then there exists an extension $u_g\in C^{0,1}(\Omega)$ such that $\restr{u_g}{\partial\Omega}=g$. Moreover, $u_g\in W^{1,\infty}(\Omega)$.
\end{lemma}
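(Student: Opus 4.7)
The plan is to produce the extension explicitly via the classical McShane--Whitney construction, and then deduce $W^{1,\infty}$ regularity from the Lipschitz property together with the Lipschitz regularity of the underlying domain $\Omega$.

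First, since $g\in C^{0,1}(\partial\Omega)$, it has a finite Lipschitz constant $L$ with respect to the Euclidean distance inherited from $\R^d$. I would then define, for $x\in\R^d$,
\[
u_g(x) := \inf_{y\in\partial\Omega}\bigl(g(y) + L\,\abs{x-y}\bigr).
\]
Because $\partial\Omega$ is bounded and $g$ is continuous (hence bounded) on it, the infimum is finite at every $x$. A direct verification from the definition shows that $u_g$ is $L$-Lipschitz on $\R^d$: for any $x_1,x_2$ and any candidate $y\in\partial\Omega$, the triangle inequality gives $g(y)+L|x_1-y|\leq g(y)+L|x_2-y|+L|x_1-x_2|$, so taking the infimum over $y$ yields $u_g(x_1)\leq u_g(x_2)+L|x_1-x_2|$, and symmetry closes the argument. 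Moreover, for $x\in\partial\Omega$ the $L$-Lipschitz bound on $g$ yields $g(x)\leq g(y)+L|x-y|$ for every $y\in\partial\Omega$, while the choice $y=x$ attains equality; hence $u_g(x)=g(x)$ on $\partial\Omega$. Restricting to $\Omega$ therefore provides the desired element of $C^{0,1}(\Omega)$.

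For the second assertion, I would invoke Rademacher's theorem: since $u_g$ is Lipschitz on $\Omega\subset\R^d$, it is differentiable almost everywhere, and its classical gradient is bounded a.e. by $L$. The classical gradient then coincides with the weak gradient of $u_g$, so $u_g\in L^\infty(\Omega)$ (using boundedness of $\Omega$ and boundedness of $u_g$, inherited from boundedness of $g$) with $\nabla u_g\in \bigl[L^\infty(\Omega)\bigr]^d$, giving $u_g\in W^{1,\infty}(\Omega)$ as claimed.

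There is no real obstacle here; the construction is completely explicit, and the only point demanding any care is the passage from pointwise Lipschitz continuity to membership in $W^{1,\infty}(\Omega)$, which is handled by Rademacher's theorem (and is, in any case, immediate on the Lipschitz domain $\Omega$ of Assumption~\ref{Assumption_Domain_permittivity}, for which $W^{1,\infty}(\Omega)=C^{0,1}(\overline\Omega)$). This is precisely why the lemma is stated as a straightforward consequence of the exercises cited in \cite{Leoni_2017}.
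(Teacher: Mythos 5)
Your proof is correct and takes essentially the same route the paper intends: it does not prove the lemma itself but refers to Exercises 9.12 and 11.50 of \cite{Leoni_2017}, which develop precisely the McShane--Whitney extension formula you use and the identification of Lipschitz functions with elements of $W^{1,\infty}$ via Rademacher's theorem. Your argument is complete and rigorous; the only cosmetic remark is that invoking the Lipschitz regularity of $\partial\Omega$ in the last paragraph is unnecessary for the direction you need (Lipschitz $\Rightarrow W^{1,\infty}$ holds on any bounded open set), though it is harmless.
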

Therefore, for a given Lipschitz function $g\in C^{0,1}(\partial\Omega)$, we have that $g$ is in all trace spaces $W^{1-1/p, p}(\partial \Omega)$ and we denote 
\begin{equation}\label{Definition_Mg}
\Mb_{g}:=\bigcap_{p<\frac{d}{d-1}}{W_{g}^{1,p}(\Omega)}.
\end{equation} 

To understand the interplay between the differential operator $\phi\mapsto -\div(\epsilon(x)\nabla \phi)$, the measure $\Fb$ and the nonlinearity $b(x,\cdot)$ in \eqref{GPBE_dimensionless}, we start by discussing the linearized problem \eqref{LGPBE_dimensionless}.

\subsection{Linear elliptic equations with measure right hand side}
First we notice that \eqref{LGPBE_dimensionless} falls in the more general class of linear elliptic problems of the form
\begin{equation} \label{general_form_linear_problem_with_measure_RHS}
\begin{aligned}
-\div\left(\bm A \nabla \phi\right)+c\phi&=\mu \quad\text{ in }\Omega,\\
\phi&=g \quad\text{ on } \partial\Omega,
\end{aligned}
\end{equation} 
where $\bm A$ is a symmetric matrix with entries in $L^\infty(\Omega)$, which satisfies the usual uniform ellipticity condition
\begin{equation} \label{uniform_ellipticity_of_A}
\underline \alpha\abs{\xi}^2\leq \bm A(x)\xi\cdot \xi\text{ for some }\underline{\alpha}>0,\text{ all } \xi=(\xi_1,\ldots,\xi_d)\in\mathbb R^d\text{ and }a.e.\, x\in \Omega,
\end{equation} 
$c\in L^\infty(\Omega)$, and $\mu \in \Mp(\Omega)$. There are different notions of solution to \eqref{general_form_linear_problem_with_measure_RHS}. Here we mention two approaches in the case $g=0$. The first one is due to Stampacchia \cite{Stampacchia_1965}, where he introduced a notion of a solution to \eqref{general_form_linear_problem_with_measure_RHS} defined by duality using the adjoint of the complete second order operator. The second one is due to Boccardo and Gallou\"et and first appearing in \cite{Boccardo_Gallouet_1989}, where they defined weak solutions of \eqref{general_form_linear_problem_with_measure_RHS} to be those satisfying
\begin{equation} \label{Distributional_solution_defined_by_approximation_Boccardo_Gallouet}
\begin{aligned}
 &\phi\in \Mb_0 \,\text{ and }\,\int_{\Omega}{\bm A\nabla \phi\cdot\nabla v \dd x} + \int_{\Omega}{c\phi v \dd x}=\int_{\Omega}{v \dd\mu}\quad \text{ for all } v\in \Nb,
\end{aligned}
\end{equation} 
and whose existence is proved passing to the limit in the solutions for more regular data $\mu$.

The solution $\phi$ defined by duality in the framework of Stampacchia is unique and can be shown to satisfy the weak formulation \eqref{Distributional_solution_defined_by_approximation_Boccardo_Gallouet} above as well. However applying this approach is not always possible, and for the nonlinear GPBE \eqref{GPBE_dimensionless} it is not clear how to do so since we have discontinuous space dependent coefficients in the principal part. On the other hand the approximation approach of Boccardo and Gallou\"et can be extended for relatively general nonlinear elliptic problems (see Theorems 1 and 3 in \cite{Boccardo_Gallouet_1989}, Theorem 1 in \cite{Boccardo_Gallouet_Nonlinear_equations_with_RHS_measures_1992}). Since the weak formulation for the latter notion of solutions only involves integrating by parts once, it is a problem which can be immediately posed for \eqref{GPBE_dimensionless} as in \eqref{weak_formulation_General_PBE_W1p_spaces} below, so in this work we focus on this type of weak solutions. Some works further discussing the relations between these notions of solution are \cite{Prignet_1995, Meyer_Panizzi_Schiela_2011}.

A difficulty in adopting this notion is is that in dimension $d\ge 3$ and for a general diffusion coefficient matrix $\bm A$ which is in $L^\infty(\Omega)$ and satisfies the uniform ellipticity condition \eqref{uniform_ellipticity_of_A}, the weak formulation \eqref{Distributional_solution_defined_by_approximation_Boccardo_Gallouet} could nevertheless exhibit nonuniqueness, as shown by a counterexample due to Serrin \cite{Serrin_Pathological_Solutions_Elliptic_PDEs,Prignet_1995}. However, under some assumptions on the regularity of the coefficient matrix $\bm A$, one can still show the uniqueness of a weak solution to \eqref{general_form_linear_problem_with_measure_RHS} in the sense of \eqref{Distributional_solution_defined_by_approximation_Boccardo_Gallouet} by employing an adjoint problem with a more regular right-hand side. One such applicable result is a classical one due to Meyers (see \cite{Meyers_Lp_estimates_1964}, Theorem 4.1, Theorem 4.2 in \cite{Bensoussan_Lions_Papanicolaou_book_1978}, or \cite{Gallouet_Monier_On_the_regularity_of_solutions_Meyers_Theorem_1999} for Lipschitz domains), covering general $\bm A$ but only $d=2$, as used for uniqueness in Theorem 2 in  \cite{Gallouet_Herbin_Existence_of_a_solution_to_coupled_elliptic_systems_1994}. For higher dimensions, restrictions on $\bm A$ are necessary, and for example the case $\bm A = \bm I$ and $d=3$ is treated in Theorem 2.1 of \cite{Droniou_Gallouet_Herbin_2003} by using a regularity result of Grisvard \cite{Grisvard_Elliptic_Problems_in_Nonsmooth_Domains}. 

Since these do not apply for our case, in Theorem~\ref{Theorem_uniqueness_of_linear_elliptic_problems_with_diffusion_and_measure_rhs} below we instead apply a more recent optimal regularity result for elliptic interface problems proved in \cite{Optimal_regularity_for_elliptic_transmission_problems}, which still requires the interface $\partial\Omega_m$ to be quite smooth.

\begin{theorem}\label{Theorem_uniqueness_of_linear_elliptic_problems_with_diffusion_and_measure_rhs}
Assume that $\Omega\subset \mathbb R^d$ with $d\in \{2, 3\}$ is a bounded Lipschitz domain and let $\Omega_m\subset\Omega$ be another domain with a $C^1$ boundary and $\partial \Omega_m \cap \partial \Omega = \emptyset$. Let $\bm A$ be a function on $\Omega$ with values in the set of real, symmetric $d\times d$ matrices which is uniformly continuous on both $\Omega_m$ and $\Omega\setminus \overline{\Omega_m}$. Additionally, $\bm A$ is supposed to satisfy the ellipticity condition \eqref{uniform_ellipticity_of_A}. Further, let $g\in H^{1/2}(\partial\Omega)$, $\mu\in\Mp(\Omega)$ be a bounded Radon measure, and $c\in L^\infty(\Omega)$ is such that $c(x)\ge 0$ a.e. in $\Omega$. Then the problem
\begin{equation}\label{General_linear_problem_with_measure_RHS}
\begin{aligned}
&\text{Find }\varphi\in \Mb_g \text{ such that }\\
&\int_{\Omega}{{\bm A}\nabla \varphi\cdot\nabla v \dd x}+\int_{\Omega}{c\,\varphi v \dd x}=\int_{\Omega}{v \dd\mu}\quad \text{ for all } v\in \Nb
\end{aligned}
\end{equation} 
has a unique solution.
\end{theorem}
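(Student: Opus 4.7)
\emph{Plan.} Existence follows from a standard approximation procedure, while the crux of uniqueness is a duality argument backed by a higher integrability regularity result for the adjoint problem.

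For existence, I would approximate $\mu$ by a sequence $f_n\in L^2(\Omega)$, uniformly bounded in total variation and converging weakly-$\ast$ to $\mu$ in $\Mp(\Omega)$, and extend $g\in H^{1/2}(\partial\Omega)$ to some $u_g\in H^1(\Omega)$. The regularized problem can be solved by Lax-Milgram, delivering $\varphi_n\in u_g+H^1_0(\Omega)$. Boccardo-Gallou\"et truncation estimates applied to $\varphi_n-u_g$ then yield uniform bounds in $W^{1,p}(\Omega)$ for every $p<d/(d-1)$, so a diagonal extraction produces a weak limit $\varphi\in\Mb_g$. Passage to the limit in the weak formulation is justified by H\"older inequalities together with the Morrey-type embedding $\Nb\hookrightarrow C_0(\overline{\Omega})$, which in particular handles the convergence of the right-hand side $\int_\Omega v\,f_n\,\dd x\to\int_\Omega v\,\dd \mu$ for each fixed $v\in\Nb$.

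For uniqueness, let $w=\varphi_1-\varphi_2\in\Mb_0$, which satisfies the homogeneous equation tested against every $v\in\Nb$. Given arbitrary $f\in L^\infty(\Omega)$, I would consider the adjoint problem
\[
-\div(\bm A\nabla z)+c\,z=f\text{ in }\Omega,\qquad z=0\text{ on }\partial\Omega,
\]
which admits a unique weak solution $z\in H^1_0(\Omega)$ by Lax-Milgram since $\bm A$ is symmetric and $c\ge 0$. The essential step, and the main obstacle, is promoting this to $z\in\Nb$, i.e., $z\in W^{1,q}_0(\Omega)$ for some $q>d$. Classical Meyers-type estimates only suffice in dimension two or when $\bm A$ is continuous; here, due to the jump of $\bm A$ across $\Gamma$, I would invoke the optimal regularity result for elliptic transmission problems in \cite{Optimal_regularity_for_elliptic_transmission_problems}. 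Its hypotheses are exactly matched by the standing assumptions ($\Omega$ Lipschitz, $\partial\Omega_m\in C^1$ disjoint from $\partial\Omega$, $\bm A$ uniformly continuous on each side of $\Gamma$, and $f\in L^\infty$), and in both $d=2$ and $d=3$ it yields the required higher integrability of $\nabla z$ up to the boundary.

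With $z\in\Nb$ in hand, testing the homogeneous equation for $w$ against $z$ produces
\[
\int_\Omega\bm A\nabla w\cdot\nabla z\,\dd x+\int_\Omega c\,w\,z\,\dd x=0.
\]
To exploit the equation for $z$ one needs to test it against $w\in\Mb_0$, which is not directly admissible since the weak formulation for $z$ requires $H^1_0$ test functions. I would approximate $w$ by $w_n\in C^\infty_c(\Omega)$ in the $W^{1,p}_0$ topology for some $p$ satisfying $q/(q-1)\le p<d/(d-1)$ (possible since $q>d$), test against $w_n$, and pass to the limit using H\"older together with $z\in L^\infty(\Omega)$ by Morrey, obtaining
\[
\int_\Omega\bm A\nabla z\cdot\nabla w\,\dd x+\int_\Omega c\,z\,w\,\dd x=\int_\Omega f\,w\,\dd x.
\]
Symmetry of $\bm A$ then forces $\int_\Omega f\,w\,\dd x=0$ for every $f\in L^\infty(\Omega)$, from which $w=0$ follows.
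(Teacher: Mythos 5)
Your proof is correct and follows essentially the same route as the paper: approximation by regularized right-hand sides for existence, and a duality argument for uniqueness whose key ingredient is the optimal regularity result of \cite{Optimal_regularity_for_elliptic_transmission_problems} for elliptic interface problems applied to an adjoint equation, yielding a test function in $\Nb$. The one small step you elided is that the cited regularity theorem concerns the operator $-\nabla\cdot\bm A\nabla+\lambda$ with \emph{constant} $\lambda$, so in the adjoint problem the variable zeroth-order term $c\,z$ must first be absorbed into the right-hand side before invoking it; this is legitimate because $z\in H_0^1(\Omega)\hookrightarrow L^6(\Omega)$ for $d\in\{2,3\}$, so $-c\,z+f\in W^{-1,q}(\Omega)$ for some $q>d$, exactly as done in the paper.
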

\begin{proof}
\textbf{Existence:} The existence of a solution $\varphi$ of problem \eqref{General_linear_problem_with_measure_RHS} in the case of homogeneous Dirichlet boundary condition, i.e., $g=0$ on $\partial\Omega$ follows from Theorem~3 in \cite{Boccardo_Gallouet_1989} where a solution is obtained as the limit of the solution to problems with regular right-hand sides. In the case where $g$ is not identically zero on $\partial\Omega$ one can find a solution of \eqref{General_linear_problem_with_measure_RHS} using the linearity. We split $\varphi$ into two components $\varphi_D$ and $\varphi_0$ such that $\varphi_D$ satisfies a linear problem with nonhomogeneous Dirichlet boundary condition and zero right-hand side, i.e.,
\begin{equation} \label{linear_problem_for_uD_W1p_spaces}
\begin{aligned}
&\varphi_D\in \Mb_g \text{ such that }\\
&\int_{\Omega}{{\bm A}\nabla \varphi_D\cdot\nabla v \dd x}+\int_{\Omega}{c\,\varphi_Dv \dd x}=0\quad \text{ for all } v\in \Nb
\end{aligned}
\end{equation} 
and $\varphi_0$ satisfies a linear problem with homogeneous Dirichlet boundary condition and measure right-hand side.
\begin{equation} \label{linear_problem_for_u0_W1p_spaces}
\begin{aligned}
&\varphi_0\in \Mb_0 \text{ such that }\\
&\int_{\Omega}{{\bm A}\nabla \varphi_0\cdot\nabla v \dd x}+\int_{\Omega}{c\,\varphi_0v \dd x}=\int_{\Omega}{v d\mu}\quad \text{ for all } v\in \Nb.
\end{aligned}
\end{equation} 
Clearly, if we replace the solution space in \eqref{linear_problem_for_uD_W1p_spaces} with $H_g^1(\Omega)$ and the test space with $H_0^1(\Omega)$, there exists a unique solution $\varphi_D\in H_g^1(\Omega)$ (by the Lax-Milgram theorem). Since $H_g^1(\Omega)\subset \bigcap_{p<\frac{d}{d-1}}{W_g^{1,p}(\Omega)}$ and $\bigcup_{q>d}{W_0^{1,q}(\Omega)}\subset H_0^1(\Omega)$ it is clear that this $\varphi_D$ also solves \eqref{linear_problem_for_uD_W1p_spaces}. By Theorem~3 in \cite{Boccardo_Gallouet_1989}, problem \eqref{linear_problem_for_u0_W1p_spaces} also possesses a solution $\varphi_0$ obtained by approximation as the weak (even strong) limit in $W_0^{1,q}(\Omega)$ for every fixed $q<\frac{d}{d-1}$ of a sequence of solutions $\{\varphi_{0,n}\}_{n \in \mathbb{N}}$ of $H^1$ weak formulations with regularized right-hand sides. Now it is clear that $\varphi=\varphi_D+\varphi_0$ solves \eqref{General_linear_problem_with_measure_RHS}, and in fact the functions $\varphi_D+\varphi_{0,n}$ provide the same kind of approximation, since they satisfy $H^1$ weak formulations of linear problems with the same regularized right-hand sides and nonhomogeneous boundary condition given by $g$ on $\partial\Omega$.

\textbf{Uniqueness:} It is enough to show that if $\varphi$ satisfies the homogeneous problem \eqref{General_linear_problem_with_measure_RHS} with $\mu=0$ then $\varphi=0$. For a fixed $\theta\in L^\infty(\Omega)$, we consider the auxiliary problem 
\begin{equation} \label{Auxiliary_problem_for_uniqueness_of_General_linear_elliptic_problem_with_measure_RHS}
\begin{aligned}
&\text{Find }z\in H_0^1(\Omega) \text{ such that }\\
&\int_{\Omega}{{\bm A}\nabla z\cdot\nabla v \dd x}+\int_{\Omega}{c\,zv \dd x}=\int_{\Omega}{\theta v \dd x},\,\mathrm{for}\ \mathrm{all}\  v\in H_0^1(\Omega).
\end{aligned}
\end{equation} 
By the Lax-Milgram Theorem, this problem has a unique solution $z\in H_0^1(\Omega)$. In view of the Sobolev embedding theorem, for $d=3$, $H^1(\Omega)\hookrightarrow L^6(\Omega)$ and for $d=2$, $H^1(\Omega)\hookrightarrow L^r(\Omega)$ for all $1\leq r<\infty$. Therefore, in both cases $z\in L^6(\Omega)$ and consequently $(-cz+\theta)\in L^6(\Omega)$. Since $v \mapsto \int_{\Omega}{\left(-cz+\theta\right) v \dd x}$ defines a bounded linear functional in $W^{-1,p'}$ for all $p'\in (d,6]$ (for all $\frac{6}{5}\leq p<\frac{d}{d-1}$) and since $\Gamma \in C^1$, by applying Theorem~\ref{Thm_Optimal_Regularity_for_Elliptic_Interface_Problems}, it follows that $z\in W_0^{1,q_0}(\Omega)$ for some $q_0\in(d,6]$. By a density argument we see that \eqref{Auxiliary_problem_for_uniqueness_of_General_linear_elliptic_problem_with_measure_RHS} holds for all test functions $v\in W_0^{1,q_0'}(\Omega)$ with $1/q_0+1/q_0'=1$. Thus, we can use $z$ as a test function in \eqref{General_linear_problem_with_measure_RHS} ($\mu=0$, $g=0$) and $\varphi$ as a test function in \eqref{Auxiliary_problem_for_uniqueness_of_General_linear_elliptic_problem_with_measure_RHS}. Thus, we obtain
\begin{equation} 
0=\int_{\Omega}{{\bm A}\nabla z\cdot\nabla \varphi \dd x}+\int_{\Omega}{cz \varphi \dd x}=\int_{\Omega}{\theta \varphi \dd x}.
\end{equation} 
Since $\theta$ was an arbitrary function in $L^\infty(\Omega)$, it follows that $\varphi=0$ a.e. in $\Omega$.
\end{proof}

In light of this existence and uniqueness result, this notion of weak solution is indeed applicable for the LGPBE:

\begin{definition}\label{definition_weak_solution_LGPBE_W1p_spaces}
A measurable function $\phi$ is called a weak solution of \eqref{LGPBE_dimensionless} if it satisfies
\begin{equation} \label{LGPBE_dimensionless_Weak_formulation_W1p_spaces}
\begin{aligned}
\phi \in \Mb_{g_\Omega}\ \text{ and }\ \int_{\Omega}{\epsilon \nabla \phi\cdot \nabla v \dd x}+\int_{\Omega}{\overline{m}^2\phi v\dd x}=\langle \Fb,v\rangle + \int_{\Omega}{\ell v\dd x} \text{ for all } v\in \Nb.
\end{aligned}\tag{wLGPBE}
\end{equation} 
\end{definition}

\subsection{Semilinear elliptic equations with measure right hand side}

A natural way to extend the weak formulation \eqref{LGPBE_dimensionless_Weak_formulation_W1p_spaces} to the semilinear case of the GPBE is as follows:
\begin{definition}\label{definition_weak_formulation_full_GPBE_dimensionless}
We call $\phi$ a weak solution of problem \eqref{GPBE_dimensionless} if
\begin{equation}\label{weak_formulation_General_PBE_W1p_spaces}
\begin{aligned}
&\phi \in \Mb_{g_\Omega}, \ \ {b(x,\phi)v\in L^1(\Omega)}\text{ for all }v\in \Nb, \text{ and}\\
&\int_{\Omega}{\epsilon\nabla \phi\cdot\nabla v \dd x}+\int_{\Omega}{b(x,\phi)v \dd x}=\langle \Fb, v\rangle \, \text{ for all } \,v\in \Nb.
\end{aligned}\tag{wGPBE}
\end{equation} 
\end{definition}

The approximation schemes used for existence of this type of solutions in Theorems 2 and 3 of \cite{Boccardo_Gallouet_1989} treat $L^1$ data and with no growth condition on the semilinear term, or measure data but with growth conditions on it. In our case however, the nonlinearity $b$ is not bounded onto any $L^q$ space so its not quite clear how to implement such an approximation scheme. We instead treat existence in Section \ref{Section_Existence_GPBE} by a variational approach that exploits the particular structure of the biomolecular geometry introduced in the previous section. Since the right hand side is supported on $\Omega_m$ but $b$ vanishes on it, so the solution can be split additively reflecting these different contributions, as done in Section \ref{Section_Splitting} below. The energy formally associated to \eqref{definition_weak_formulation_full_GPBE_dimensionless} with $\Fb=0$ is convex and to apply the direct method of the calculus of variations no growth bounds on $b$ are needed, but their absence means that this energy functional is not differentiable, so to go back to the weak formulation we will prove an a priori $L^\infty$ estimate for the minimizers.

The question of existence and uniqueness for more general linear and nonlinear elliptic problems involving measure data is studied in many further works, some notable ones being \cite{Droniou_Gallouet_Herbin_2003,
Boccardo_Murat_property_of_nonlinear_elliptic_PDEs_with_measure_1994,
Bartolucci_Leoni_Orsina_Ponce_exp_nonlinearity_and_measure_2005,
Orsina_Prignet_strong_stability_results_for_elliptic_PDEs_with_measure_2002,
Ponce_book_2016,
Brezis_Marcus_Ponce_Nonlinear_Elliptic_With_Measures_Revisited, Brezis_Nonlinear_Elliptic_Equations_Involving_Measures,
Benilan_Brezis_Thomas_Fermi_Equation_2003}. There are many nontrivial cases, for example in \cite{Brezis_Nonlinear_Elliptic_Equations_Involving_Measures} it is shown that even the simple equation $-\Delta u+\abs{u}^{p-1}u=\delta_a$ with $u=0$ on $\partial\Omega$ and $a\in\Omega$ does not have a solution in $L_{loc}^p(\Omega)$ for any $p\ge \frac{d}{d-2}$ when $d\ge 3$.

\section{Electrostatics of point charges and solution splittings}\label{Section_Splitting}
As we have already mentioned, we aim to use the particular geometry and coefficients of the biomolecular setting to show existence of solutions for \eqref{GPBE_dimensionless} in the weak sense of \eqref{weak_formulation_General_PBE_W1p_spaces}. This is done by an additive splitting of the solutions based on the Green function for the Poisson equation on the full space, or in more physical terms, the Coulomb potential for electrostatics in a uniform dielectric. This procedure is common in the applications literature, so we orient ourselves to the same kind of decompositions done there, which we explain in this section.

To this end, define the function $G: \Omega \to \overline{\R}$ by
\begin{equation} \label{expression_for_G_2d_and_3d}
\begin{aligned}
G(x)&=\sum_{i=1}^{N_m}{G_i(x)}=-\frac{2 e_0^2}{\epsilon_m k_BT}\sum_{i=1}^{N_m}{z_i \ln{|x-x_i|}}\quad\text{ if }d=2,\\
G(x)&=\sum_{i=1}^{N_m}{G_i(x)}=\frac{e_0^2}{\epsilon_m k_BT}\sum_{i=1}^{N_m}{\frac{z_i}{|x-x_i|}}\quad\text{ if }d=3.
\end{aligned}
\end{equation} 
This function describes the singular or Coulomb part of the potential due to the point charges $\{z_ie_0\}_{i=1}^{N_m}$ in a uniform dielectric medium with a dielectric constant $\epsilon_m$. It satisfies
\begin{equation} \label{PDE_for_G}
-\nabla\cdot(\epsilon_m \nabla G)=\Fb\quad\text{in }\mathbb R^d,\, d\in\{2,3\},
\end{equation}  
in the sense of distributions, that is
\begin{equation} \label{Distributional_Laplacian_for_G}
-\int_{\mathbb R^d}{\epsilon_m G \Delta v \dd x}=\langle \Fb,v\rangle\quad\text{for all }v\in C_c^\infty(\mathbb R^d).
\end{equation} 
In particular, \eqref{Distributional_Laplacian_for_G} is valid for all $v\in C_c^\infty(\Omega)$. Note that $G$ and $\nabla G$ are in $L^p(\Omega)$ for all $p<\frac{d}{d-1}$ and thus $G\in \Mb_G=\bigcap_{p<\frac{d}{d-1}}{W_G^{1,p}(\Omega)}$. This means that we can integrate by parts on \eqref{Distributional_Laplacian_for_G} to obtain
\begin{equation}\label{Weak_formulation_for_G_smooth_test_functions}
\int_{\Omega}{\epsilon_m \nabla G \cdot \nabla v \dd x}=\langle \Fb,v\rangle\quad\text{for all }v\in C_c^\infty(\Omega).
\end{equation} 
For a fixed $q>d$, owing to the Sobolev embedding $W_0^{1,q}(\Omega)\hookrightarrow C_0(\overline\Omega)$ (see Theorem 4.12 in \cite{Adams_Fournier_Sobolev_Spaces}), 
$\Fb$ is bounded on $W_0^{1,q}(\Omega)$, i.e.,
\begin{equation*}
\abs{\langle \Fb,v\rangle}=\Bigg|\frac{4\pi e_0^2}{k_BT}\sum_{i=1}^{N_m}{z_i v(x_i)}\Bigg|\leq \frac{4\pi e_0^2}{k_BT}\sum_{i=1}^{N_m}{\abs{z_i}}\|v\|_{L^\infty(\Omega)}\leq C_E \frac{4\pi e_0^2}{k_BT}\sum_{i=1}^{N_m}{\abs{z_i}}\|v\|_{W^{1,q}(\Omega)},
\end{equation*}
where $C_E$ is the constant in the inequality $\|v\|_{L^\infty(\Omega)}\leq C_E\|v\|_{W^{1,q}(\Omega)}$.
 Since $C_c^\infty(\Omega)$ is dense in $W_0^{1,q}(\Omega)$, we see that \eqref{Weak_formulation_for_G_smooth_test_functions} is valid for all $v\in W_0^{1,q}(\Omega)$, and consequently, for all $v\in \Nb=\bigcup_{q>d}{W_0^{1,q}(\Omega)}$. Hence, the electrostatic potential $G$ generated by the charges $\{e_0z_i\}_{i=1}^{N_m}$ in a uniform dielectric with the dielectric coefficient $\epsilon_m$ belongs to  $\Mb_{G}$ and satisfies the integral relation
\begin{equation}\label{Weak_formulation_for_G}
\int_{\Omega}{\epsilon_m \nabla G \cdot \nabla v \dd x}=\langle \Fb,v\rangle\quad\text{for all }v\in \Nb,
\end{equation} 
indicating that subtracting $G$ from a weak solution $\phi$ satisfying either \eqref{LGPBE_dimensionless_Weak_formulation_W1p_spaces} or \eqref{weak_formulation_General_PBE_W1p_spaces} allows us to remain within the same weak solution framework. In fact \eqref{Weak_formulation_for_G} can be seen as motivation for this notion of solution, since general measure data cannot be more singular than a point charge.

This observation leads to the definition of linear 2-term and 3-term splittings of~$\phi$ based on $G$, which we describe below. In Section \ref{Section_Existence_GPBE} we will use these splittings to obtain the existence of a solution to \eqref{LGPBE_dimensionless_Weak_formulation_W1p_spaces} and \eqref{weak_formulation_General_PBE_W1p_spaces} without having to deal with the measure data $\Fb$ directly. Moreover, if $\phi$ is unique as proved under mild assumptions in Section \ref{Section_uniqueness_for_GPBE}, then there is no difference between the particular solutions found by 2-term and 3-term splitting, providing justification for these commonly used strategies.

\subsection{2-term splitting}\label{Subsection_2-term_splitting_GPBE}
As anticipated above and also commonly used in practice (see, e.g. \cite{Chen2006b,Zhou_Payne_Vasquez_Kuhn_Levitt_1996,Niedermeier_Schulten_1992}) we can split the full potential of \eqref{weak_formulation_General_PBE_W1p_spaces} as $\phi=u+G$, where $u$ is a well behaved regular component and $G$ is defined by \eqref{expression_for_G_2d_and_3d}. In this case $u$ is usually called the reaction field potential, which accounts for the forces acting on a biomolecule due to the presence of the solvent (see \cite{Niedermeier_Schulten_1992,Rogers_Sternberg_1984}). Taking into account \eqref{Weak_formulation_for_G}, we obtain the following integral identity for $u$:
\begin{equation}\label{Weak_formulation_for_uregular_General_PBE_W1p_spaces}
\begin{aligned}
&\text{Find }u\in \Mb_{g_\Omega-G} \text{ such that } b(x,u+G)v \in L^1(\Omega)\,\text{ for all }v\in \Nb\,\text{ and }\\
&\int_{\Omega}{\epsilon \nabla u\cdot \nabla v \dd x}+\int_{\Omega}{b(x,u+G)v \dd x}=\int_{\Omega_s}{(\epsilon_m-\epsilon_s)\nabla 
G\cdot\nabla v \dd x}=:\langle \Gb_2,v\rangle \, \text{ for all } \,v\in \Nb.
\end{aligned}
\end{equation}
The advantage of this formulation is that in contrast to the situation in \eqref{weak_formulation_General_PBE_W1p_spaces} the right hand side $\Gb_2$ belongs to $H^{-1}(\Omega)$ and is supported on $\Gamma$ if $\epsilon_s$ is constant (see Remark~\ref{Remark_RHS_2-term_splitting_rewritten} below). Noticing that $H_{g_\Omega-G}^1(\Omega)\subset \Mb_{g_\Omega-G}$ we can consider the weak formulation with $H^1$ trial space
\begin{equation}\label{3_weak_formulations_2_term_splitting_uregular}
\begin{aligned}
&\text{Find } u\in H_{g_\Omega-G}^1(\Omega)\text{ such that }  b(x,u+G)v\in L^1(\Omega) \text{ for all } v\in \Wb \,\text{ and }\\ %H_0^1(\Omega)\cap L^\infty(\Omega) \text{ and }\\
&\int_{\Omega}{\epsilon \nabla u\cdot \nabla v \dd x}+\int_{\Omega}{b(x,u+G)v\dd x}=\langle \Gb_2,v\rangle \,\text{ for all }v\in \Wb.
\end{aligned}
\end{equation} 
In \eqref{3_weak_formulations_2_term_splitting_uregular} we don't fix the testing space $\Wb$ yet, since proving existence of such a $u$ will be nontrivial. In any case we remark that we can go back to a solution $\phi$ of \eqref{weak_formulation_General_PBE_W1p_spaces} as soon as $\Nb \subset \Wb$. For example, using $\Wb=H_0^1(\Omega)$ or $\Wb=H_0^1(\Omega)\cap L^\infty(\Omega)$ (for which finding $u$ is clearly easier) would be enough, since functions in $\Nb$ are bounded by the Sobolev embedding $W^{1,q}(\Omega)\subset L^\infty(\Omega)$ for $q>d$.
\begin{remark}\label{Remark_RHS_2-term_splitting_rewritten}
Recalling that $\epsilon_s\in C^{0,1}(\overline \Omega_s)$ (and therefore $\epsilon_s\in W^{1,\infty}(\Omega_s)$) and that $G$ is harmonic in a neighborhood of $\Omega_s$, we see that $\bm \psi:=(\epsilon_m-\epsilon_s)\nabla G\in \left[H^1(\Omega_s)\cap L^\infty(\Omega_s)\right]^d$ and that its weak divergence is given by $\div(\bm \psi)=\nabla(-\epsilon_s)\cdot\nabla G+(\epsilon_m-\epsilon_s) \Delta G$ (see Proposition 9.4 in \cite{Brezis_FA}). Thus, we can rewrite the term $\langle\Gb_2,v\rangle$ on the right-hand side of \eqref{3_weak_formulations_2_term_splitting_uregular} by applying the integration by parts formula:
\begin{equation}\label{RHS_2-term_splitting_rewritten}
\begin{aligned}
\int_{\Omega_s}{(\epsilon_m-\epsilon_s)\nabla G\cdot\nabla v \dd x}=&-\int_{\Gamma}{(\epsilon_m-\epsilon_s)\nabla G\cdot \bm n_\Gamma\, v \dd s} + \int_{\partial\Omega}{(\epsilon_m-\epsilon_s)\nabla G\cdot \bm n_{\partial\Omega}\, v \dd s}\\
&\qquad -\int_{\Omega_s}{\left(\nabla(-\epsilon_s)\cdot\nabla G+(\epsilon_m-\epsilon_s) \Delta G\right)v \dd x}\\
=&-\int_{\Gamma}{(\epsilon_m-\epsilon_s)\nabla G\cdot \bm n_\Gamma\, v \dd s}+\int_{\Omega_s}{\nabla \epsilon_s \cdot\nabla G\, v \dd x},
\end{aligned}
\end{equation} 
where the appearances of $v$ should be interpreted as traces if necessary. Now, it is seen that \eqref{3_weak_formulations_2_term_splitting_uregular} is the weak formulation of a nonlinear elliptic interface problem with a jump condition on the normal flux i.e., $\left[\epsilon\nabla u\cdot \bm n_\Gamma\right]_\Gamma=-(\epsilon_m-\epsilon_s)\nabla G\cdot \bm n_\Gamma=-\left[\epsilon\nabla G\cdot \bm n_\Gamma\right]_\Gamma$. Moreover, using the equality \eqref{RHS_2-term_splitting_rewritten} which is valid for $v \in \Nb$ we can go back to \eqref{Weak_formulation_for_uregular_General_PBE_W1p_spaces} and obtain a weak formulation of the reaction field potential analogous to the one for the full potential in \eqref{weak_formulation_General_PBE_W1p_spaces} but with right hand side the measure 
\[\Fb_\Gamma:=\left((\epsilon_m-\epsilon_s)\nabla G\cdot \bm n_\Gamma\right) \Hb^{d-1} \mres\Gamma+\left(\nabla \epsilon_s \cdot\nabla G\right)\,\Lb^d \mres \Omega_s \, \in \Mp(\Omega),\] 
where $\Hb^{d-1} \mres \Gamma$ is the $(d-1)$-dimensional Hausdorff measure restricted to $\Gamma$ and $\Lb^d \mres \Omega_s$ is the Lebesgue measure restricted to $\Omega_s$. That is, formally we have
\begin{equation}\label{Elliptic_equation_for_2-term_u}
\begin{aligned}
-\nabla\cdot\left(\epsilon\nabla u\right)+b(x,u+G)&=\Fb_\Gamma \quad &\text{ in } \Omega,\\
\phi&=g_\Omega-G\quad &\text{ on } \partial\Omega.
\end{aligned}
\end{equation}
By doing this manipulation through the potential $G$ obtained from the Newtonian kernel we have replaced the measure $\Fb$ which seen as a distribution does not belong to $H^{-1}(\Omega)$ with another irregular distribution $\Fb_\Gamma$, which this time is in fact in $H^{-1}(\Omega)$ (acting through the trace on $\Gamma$, which is $C^1$), making it suitable for $H^1$ weak formulations.
\end{remark}
The above considerations imply that in numerical computations, which are one of the main motivations to introduce this splitting, the full potential $\phi$ can be obtained without needing to approximate the singularities that arise at the positions $x_i$ of the fixed partial charges. Some other problems appear however, motivating the introduction of a further splitting in three terms that we discuss in Section \ref{Subsection_3-term_splitting_GPBE} below. One such problem arises when $u$ has almost the same magnitude as $G$ but opposite sign and $\abs{\phi}=\abs{G+u}\ll \abs{u}$ (see e.g.~\cite{Holst2012}). This typically happens in the solvent region $\Omega_s$ and under the conditions that the ratio $\epsilon_m / \epsilon_s$ is much smaller than $1$ and the ionic strength $I_s$ is nonzero. In this case a small relative error in $u$ generates a substantial relative error in $\phi = G+u$. However, the 2-term splitting remains useful in practice, since it allows to directly compute the electrostatic contribution to the solvation free energy through the reaction field potential as $\frac{1}{2}\sum_{i=1}^{N_m}{z_ie_0u(x_i)}.$

\subsection{3-term splitting}
\label{Subsection_3-term_splitting_GPBE}
Although the 2-term splitting we just introduced would suffice to obtain existence of solutions, we describe now another commonly used splitting with the aim of providing some justification for it through our uniqueness results. In it one considers three components $\phi=G+u^H+u$, where $\phi=u$ in $\Omega_s$ and $u^H$ is such that $u^H=-G$ in $\Omega_s$ (see \cite{Chern_Liu_Wang_Electrostatics_for_Macromolecules_2003,Holst2012}). By substituting this expression for $\phi$ into \eqref{weak_formulation_General_PBE_W1p_spaces},  using \eqref{Weak_formulation_for_G}, the fact that $u^H=-G$ in $\Omega_s$, and assuming that $u^H\in \Mb$, we obtain the following weak formulation\footnote{Notice that $\phi,u^H\in \Mb$ implies $u\in\Mb$. In particular, the integral $\int_{\Omega_m}{\epsilon_m\nabla u^H\cdot\nabla v \dd x}$ is well defined.} for $u$:
\begin{equation} \label{Weak_formulation_for_uregular_3-term_splitting_General_PBE_W1p_spaces}
\begin{aligned}
&\text{Find }u\in \Mb_{g_\Omega} \text{ such that } b(x,u)v \in L^1(\Omega)\,\text{ for all }v\in \Nb\,\text{ and }\\
&\int_{\Omega}{\epsilon \nabla u\cdot \nabla v \dd x}+\int_{\Omega}{b(x,u)v \dd x}\\
&\qquad\qquad=-\int_{\Omega_m}{\epsilon_m\nabla u^H\cdot\nabla v \dd x}+\int_{\Omega_s}{\epsilon_m\nabla G\cdot\nabla v \dd x}=:\langle \Gb_3,v\rangle \,\text{ for all } \, v\in \Nb.
\end{aligned}
\end{equation} 
To define $u^H$ in $\Omega_m = \Omega \setminus \overline{\Omega_s}$ we must satisfy the condition $u^H\in \Mb$, which holds in particular if $u^H\in H^1(\Omega)$. Again if $\Nb\subset \Wb$ and since $H_{g_\Omega}^1(\Omega)\subset \Mb_{g_\Omega}$, we can find a particular solution $u$ of \eqref{Weak_formulation_for_uregular_3-term_splitting_General_PBE_W1p_spaces} by considering yet another $H^1$ weak formulation:
\begin{equation}\label{Weak_formulation_for_uregular_3-term_splitting_PBE_H1}
\begin{aligned}
&\text{Find }u\in H_{g_\Omega}^1(\Omega) \text{ such that } b(x,u)v\in L^1(\Omega)\, \text{ for all } v\in  \Wb\, \text{ and } \\%H_0^1(\Omega)\cap L^\infty(\Omega)\text{ and } \\
&\int_{\Omega}{\epsilon \nabla u\cdot \nabla v \dd x}+\int_{\Omega}{b(x,u)v \dd x}=\langle \Gb_3,v\rangle \,\text{ for all } v\in \Wb. % H_0^1(\Omega)\cap L^\infty(\Omega),
\end{aligned}
\end{equation} 
where again $\Gb_3 \in H^{-1}(\Omega)$ since we have chosen $u^H$  in $H^1(\Omega)$. Testing \eqref{Weak_formulation_for_uregular_3-term_splitting_PBE_H1} with functions $v$ supported in $\Omega_m$ and such that $v\in H_0^1(\Omega_m)$ we obtain\footnote{Note that one can immediately test \eqref{Weak_formulation_for_uregular_3-term_splitting_PBE_H1} with $v\in H_0^1(\Omega_m)\cap L^\infty(\Omega_m)$. Now, since in this case we obtain a linear problem in $\Omega_m$, by a standard density argument one sees that this linear problem can always be tested with $v\in H_0^1(\Omega_m)$.}
\[
\int_{\Omega_m}{\epsilon_m \nabla u\cdot \nabla v \dd x}=-\int_{\Omega_m}{\epsilon_m\nabla u^H\cdot\nabla v \dd x}\, \text{ for all }v\in H_0^1(\Omega_m).
\]
It is particularly convenient (for example in a posteriori error analysis, see \cite{Kraus_Nakov_Repin_PBE2_2020}) to impose that $u^H$ is weakly harmonic in $\Omega_m$, that is
\begin{equation} \label{definition_of_uH_3-term_splitting}
\begin{aligned}
&u^H\in H_{-G}^1(\Omega_m)\quad\text{ and }\quad\int_{\Omega_m}{\nabla u^H\cdot\nabla v \dd x}=0 \text{ for all } v\in H_0^1(\Omega_m),
\end{aligned}
\end{equation} 		
where the Dirichlet boundary condition $u^H=-G$ on $\partial\Omega$ ensures that $u^H$ has the same trace on $\Gamma$ from both sides and therefore $u^H\in H^1(\Omega)$.	

\begin{remark}\label{Remark_right_hand_side_functional_3-term_splitting}
In this case the right-hand side of equation \eqref{Weak_formulation_for_uregular_3-term_splitting_PBE_H1} depends on the solution of \eqref{definition_of_uH_3-term_splitting}, meaning in particular that for numerical approximations two concatenated elliptic problems  have to be solved. Moreover, by the divergence theorem (see, e.g., Theorem 2 in Section 1.3 in \cite{Dautray_Lions_Volume_6}, Theorem 3.24 in \cite{Peter_Monk}) we can compute
\begin{equation}\label{RHS_3-term_splitting_rewritten}
\begin{aligned}
\left\langle \Gb_3,v\right\rangle=&-\int_{\Omega_m}{\epsilon_m\nabla u^H\cdot\nabla v \dd x}+\int_{\Omega_s}{\epsilon_m\nabla G\cdot\nabla v \dd x}\\
=&-\big\langle \gamma_{\bm n_\Gamma,\Omega_m}\big(\epsilon_m\nabla u^H\big),\gamma_{2,\Gamma}(v)\big\rangle_{H^{-1/2}(\Gamma)\times H^{1/2}(\Gamma)}\\
&\qquad+\langle \gamma_{\bm n_\Gamma,\Omega_s}\left(\epsilon_m\nabla G\right),\gamma_{2,\Gamma}(v)\rangle_{H^{-1/2}(\Gamma)\times H^{1/2}(\Gamma)},
\end{aligned}
\end{equation} 
where we used that $\epsilon_m$ is constant, $\nabla u^H\in H(\div;\Omega_m)$ (see \eqref{definition_of_uH_3-term_splitting}), and that $G$ is harmonic in a neighborhood of $\Omega_s$. In \eqref{RHS_3-term_splitting_rewritten}, $\gamma_{\bm n_\Gamma,\Omega_m}$ and $\gamma_{\bm n_\Gamma,\Omega_s}$ are the normal trace operators in $H(\div;\Omega_m)$ and $H(\div;\Omega_s)$, respectively, and $\gamma_{2,\Gamma}$ is the trace of $v$ on $\Gamma$. This computation tells us that $\Gb_3 \in H^{-1}(\Omega)$ but in contrast to the situation in Remark \ref{Remark_RHS_2-term_splitting_rewritten}, it does not allow us to immediately conclude that the action of $\Gb_3$ can be interpreted as a measure. This is only possible when $\nabla u^H \cdot \bm n_\Gamma$ is regular enough to be defined pointwise, as is the case when $\Gamma$ is $C^3$ so that we can use regularity estimates up to the boundary (see for example Theorem 9.25 in \cite{Brezis_FA}) that would provide $u^H \in H^3(\Omega_m)$. In this smoother situation \eqref{RHS_3-term_splitting_rewritten} can be reformulated as
\begin{equation} \label{RHS_3-term_splitting_rewritten_nabla_u^H_with_L2_trace_on_Gamma}
\langle \Gb_3,v\rangle=\int_{\Gamma}{-\epsilon_m\nabla\big( u^H+G\big)\cdot \bm n_\Gamma\, \gamma_{2,\Gamma}(v) \dd s},\,\mathrm{for}\ \mathrm{all}\  v\in H_0^1(\Omega),
\end{equation} 
and in this case $\Gb_3$ can be thought of as a measure for a problem analogous to \eqref{Elliptic_equation_for_2-term_u} and again represents a jump condition on the normal component of $\epsilon\nabla u$. That is, if the function $u$ is smooth in $\Omega_m$ and $\Omega_s$ it should satisfy the jump condition $\left[\epsilon\nabla u\cdot \bm n_\Gamma\right]_\Gamma=-\epsilon_m \nabla\left(u^H+G\right)\cdot \bm n_\Gamma$.
\end{remark}

Within our context  we can obtain some milder regularity of $u^H$ without additional assumptions on $\Gamma$:

\begin{proposition}\label{proposition_u^H_is_in_W1s_s_ge_d}
If $u^H$ is defined as in Section~\ref{Subsection_3-term_splitting_GPBE}, i.e., $u^H\in H^1(\Omega)$ with $u^H=-G$ in $\Omega_s$ and satisfies \eqref{definition_of_uH_3-term_splitting}, then $u^H\in W^{1,\bar q}(\Omega)$ for some $\bar q>d$.
\end{proposition}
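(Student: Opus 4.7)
The idea is to analyse $u^H$ separately on the two subdomains $\Omega_m$ and $\Omega_s$ and then glue, using that $u^H \in H^1(\Omega)$ forces the traces on $\Gamma$ to agree. On the solvent side the result is immediate: every point charge $x_i$ lies in the interior of $\Omega_m$, so $G$ from \eqref{expression_for_G_2d_and_3d} is real-analytic on an open neighbourhood of $\overline{\Omega_s}$ and $u^H|_{\overline{\Omega_s}} = -G|_{\overline{\Omega_s}}$ is $C^\infty$; in particular $u^H \in W^{1,\infty}(\Omega_s)$. The substantive work is therefore on $\Omega_m$, where $u^H$ is the weak harmonic extension of $-G|_\Gamma$ into a domain whose boundary is only $C^1$.

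I would first reduce the problem on $\Omega_m$ to a homogeneous Dirichlet problem. Pick $\eta \in C_c^\infty(\R^d)$ equal to $1$ on an open neighbourhood of $\overline{\Omega_s}$ and vanishing near every charge position $x_i$, and set $F_0 := -\eta G$. Since $\eta = 0$ near the singularities of $G$, $F_0$ is $C^\infty$ on $\overline{\Omega}$ and $F_0 = -G$ on $\overline{\Omega_s}$. Consequently $\tilde u := u^H - F_0$ vanishes identically on $\overline{\Omega_s}$; by $H^1$-trace continuity across $\Gamma$ its trace on $\Gamma$ from $\Omega_m$ is also zero, so $\tilde u|_{\Omega_m} \in H_0^1(\Omega_m)$, and using \eqref{definition_of_uH_3-term_splitting} it satisfies weakly
\[
-\Delta \tilde u \;=\; \Delta F_0 \quad \text{in } \Omega_m, \qquad \tilde u = 0 \text{ on } \Gamma,
\]
with right-hand side $\Delta F_0 \in L^\infty(\Omega_m)$ (the cutoff removes the Dirac parts of $\Delta G$).

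The main step is then to upgrade $\tilde u$ from $H_0^1(\Omega_m)$ to $W_0^{1,\bar q}(\Omega_m)$ for some $\bar q > d$. Since $\Gamma \in C^1$ and $\Delta F_0 \in L^\infty(\Omega_m)$ embeds into $W^{-1,p}(\Omega_m)$ for every $p > d$, I plan to invoke the regularity result of \cite{Optimal_regularity_for_elliptic_transmission_problems} already used in the proof of Theorem~\ref{Theorem_uniqueness_of_linear_elliptic_problems_with_diffusion_and_measure_rhs}, applied here with the constant coefficient $\bm A = \bm I$ so that the transmission hypothesis is trivially satisfied and the statement reduces to $W^{1,\bar q}$ regularity for the Dirichlet Laplacian on a $C^1$ domain. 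Adding $F_0 \in W^{1,\infty}(\Omega)$ back then yields $u^H|_{\Omega_m} \in W^{1,\bar q}(\Omega_m)$, and combined with $u^H|_{\Omega_s} \in W^{1,\infty}(\Omega_s)$ and the matching of traces on $\Gamma$ forced by $u^H \in H^1(\Omega)$ it gives the desired $u^H \in W^{1,\bar q}(\Omega)$. The key obstacle is precisely this last regularity step: crossing the Sobolev threshold $\bar q > d$ is what really exploits the $C^1$ assumption on $\Gamma$, since on a merely Lipschitz interface Meyers-type higher integrability would only secure exponents slightly above $2$ with no guarantee of exceeding~$d$.
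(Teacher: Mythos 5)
Your proof follows essentially the same route as the paper: subtract a smooth lifting (a cutoff of $-G$ vanishing near the point charges) to reduce to a homogeneous Dirichlet problem on $\Omega_m$ with regular right-hand side, then invoke the optimal regularity result of Theorem~\ref{Thm_Optimal_Regularity_for_Elliptic_Interface_Problems} on the $C^1$ domain $\Omega_m$ with $\bm A = \bm I$ to reach $W^{1,\bar q}(\Omega_m)$ for some $\bar q > d$, and finally glue with the smooth restriction to $\Omega_s$. The paper's proof is written a bit more compactly (keeping the right-hand side as $-\int_{\Omega_m}\nabla u_{-G}^H\cdot\nabla v\,\dd x$ rather than integrating by parts to $\Delta F_0$, and leaving the gluing across $\Gamma$ implicit), but the underlying argument is the same.
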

\begin{proof}
Since $\Gamma=\partial\Omega_m$ is Lipschitz (it is even $C^1$ by assumption), we can apply Theorem~\ref{Thm_Optimal_Regularity_for_Elliptic_Interface_Problems} on $\Omega_m$ to the homogenized version of \eqref{definition_of_uH_3-term_splitting}:
\begin{equation} \label{definition_of_uH_3-term_splitting_homogenized}
\begin{aligned}
&\text{Find } u_0^H\in H_0^1(\Omega_m)\text{ such that }\\
&\int_{\Omega_m}{\nabla u_0^H\cdot\nabla v \dd x}=-\int_{\Omega_m}{\nabla u_{-G}^H\cdot\nabla v \dd x} \text{ for all } v\in H_0^1(\Omega_m),
\end{aligned}
\end{equation} 	
where $u_{-G}^H\in H^1(\Omega_m)$ and $\gamma_2\left(u_{-G}^H\right)=-G$ on $\Gamma$. We can choose $u_{-G}^H$ to be in the space $W^{1,\infty}(\Omega)$ by noting that $G\in C^{0,1}(\Gamma)$ and using a Lipschitz extension (see Lemma~\ref{Lemma_extension_pf_Lipschitz}). We can even choose $u_{-G}^H$ to be smooth in $\Omega_m$. To see this, let $r>0$ be so small that all balls $B(x_i,r)$ centered at $x_i,\,i=1,\ldots, N_m$ (the locations of the point charges, as defined after \eqref{general_PBE_1_classical_form}) and with radius $r$ are strictly contained in $\Omega_m$. Then, we define the function $u_{-G}^H:=\restr{(\psi G)}{\overline\Omega_m}\in C^\infty(\overline{\Omega_m})$, where $\psi\in C_c^\infty(\mathbb R^d)$ is  such that it is equal to $1$ in a neighborhood of $\Gamma$ and with support in $\mathbb R^d\setminus \bigcup_{i=1}^{N_m}{B(x_i,r)}$. It follows that the right-hand side of \eqref{definition_of_uH_3-term_splitting_homogenized} defines a bounded linear functional over $W_0^{1,p}(\Omega_m)$ for all $1\leq p<\infty$ and by Theorem~\ref{Thm_Optimal_Regularity_for_Elliptic_Interface_Problems} we conclude that $u_0^H\in W^{1,\overline{q}}(\Omega_m)$ for some $\overline{q}>d$. Now, $u^H=u_{-G}^H+u_0^H\in W^{1,\overline{q}}(\Omega_m)$.
\end{proof}	

\subsection{Splitting for the linearized GPBE}\label{Section_The_LGPBE}	
The 2- and 3-term splittings can also be applied in the case of the linearized GPBE, as done routinely in numerical works \cite{Niedermeier_Schulten_1992, Zhou_Payne_Vasquez_Kuhn_Levitt_1996, Bond_2009}. After substituting the expressions $\phi=G+u$ and $\phi=G+u^H+u$ into \eqref{LGPBE_dimensionless_Weak_formulation_W1p_spaces} we obtain the respective weak formulations which the regular component $u$ has to satisfy in each case. Those formulations can be written in one common form:
\begin{equation} \label{common_form_weak_formulation_u_W1p_spaces_LGPBE}
\begin{aligned}
&\text{Find }u\in \Mb_{\overline g} \text{ such that }\\
&\int_{\Omega}{\epsilon \nabla u\cdot \nabla v \dd x}+\int_{\Omega}{\overline{m}^2uv \dd x}=\int_{\Omega}{\bm f\cdot\nabla v \dd x} + \int_{\Omega}{f_0v \dd x}\,\text{ for all } v\in \Nb,
\end{aligned}
\end{equation} 
where in the case of the 2-term splitting we have
\begin{equation} \label{f0_barg_f_expressions_2-term_splitting}
f_0=-\overline m^2G+\ell, \quad {\bm f}={\bm f}_{\Gb_2}:=\chi_{\Omega_s}(\epsilon_m-\epsilon_s)\nabla G,\quad \text{ and }\quad\overline{g}=g_\Omega-G \text{ on }\partial\Omega,
\end{equation} 
 whereas in the case of the 3-term splitting we have
 \begin{equation} \label{f0_barg_f_expressions_3-term_splitting}
f_0=\ell, \quad {\bm f}={\bm f}_{\Gb_3}:=-\chi_{\Omega_m}\epsilon_m\nabla u^H+\chi_{\Omega_s}\epsilon_m\nabla G,\quad \text{ and }\quad \overline{g}=g_\Omega \text{ on }\partial\Omega.
\end{equation} 
We recall that $\ell$ and $\overline m^2$ as defined in \eqref{definition_overline_m_and_l} are zero in $\Omega_m\cup\Omega_{IEL}$ and  constants in $\Omega_{ions}$, $G$ is harmonic in a neighborhood of $\Omega_s$, and $u^H\in H^1(\Omega)\subset \Mb$. Therefore, all integrals in \eqref{common_form_weak_formulation_u_W1p_spaces_LGPBE} are well defined.
By observing that $\Nb\subset H^1(\Omega)$ and $H_{\overline g}^1(\Omega)\subset \Mb_{\overline g}$ we can find a particular solution $u$ of problem \eqref{common_form_weak_formulation_u_W1p_spaces_LGPBE} by posing a standard $H^1$ weak formulation for $u$: the trial space in \eqref{common_form_weak_formulation_u_W1p_spaces_LGPBE} is swapped with $H_{\overline g}^1(\Omega)$ and the test space is exchanged for $H_0^1(\Omega)$.

An application of Theorem~\ref{Theorem_uniqueness_of_linear_elliptic_problems_with_diffusion_and_measure_rhs} provides us with existence of a solution $\phi$ to \eqref{LGPBE_dimensionless_Weak_formulation_W1p_spaces} by the approximation strategy of \cite{Boccardo_Gallouet_1989}, which is also unique because $\Gamma\in C^1$ by Assumption~\ref{Assumption_Domain_permittivity}.
% The existence and uniqueness of a solution $\phi$ to \eqref{LGPBE_dimensionless_Weak_formulation_W1p_spaces} can be obtained by applying Theorem~\ref{Theorem_uniqueness_of_linear_elliptic_problems_with_diffusion_and_measure_rhs}, that is by the approximation strategy of \cite{Boccardo_Gallouet_1989}.
\begin{theorem}\label{Theorem_well_posedness_for_full_potential_LPBE}
 The unique weak solution $\phi$ of equation \eqref{LGPBE_dimensionless_Weak_formulation_W1p_spaces} can be given either in the form $\phi=G+u$ or in the form $\phi=G+u^H+u$, where $u\in H_{\overline g}^1(\Omega)$ is the unique solution of the problem
 \begin{equation} \label{common_form_weak_formulation_u_H1_spaces_LGPBE}
\begin{aligned}
&\text{Find }u\in H_{\overline g}^1(\Omega) \text{ such that }\\
&\int_{\Omega}{\epsilon \nabla u\cdot \nabla v \dd x}+\int_{\Omega}{\overline{m}^2uv \dd x}=\int_{\Omega}{\bm f\cdot\nabla v \dd x} + \int_{\Omega}{f_0v \dd x}\,\text{ for all } v\in H_0^1(\Omega)
\end{aligned}
\end{equation} 
with $f_0$, $\bm f$, $\overline g$ defined by either \eqref{f0_barg_f_expressions_2-term_splitting} or \eqref{f0_barg_f_expressions_3-term_splitting} for the 2- or 3-term splittings, respectively.
 
%  There exists a weak solution $\phi$ of equation \eqref{LGPBE_dimensionless} satisfying \eqref{LGPBE_dimensionless_Weak_formulation_W1p_spaces}. A~particular $\phi$ satisfying \eqref{LGPBE_dimensionless_Weak_formulation_W1p_spaces} can be given either in the form $\phi=G+u$ or in the form $\phi=G+u^H+u$, where $u\in H_{\overline g}^1(\Omega)$ is the unique solution of the problem
%  \begin{equation} \label{common_form_weak_formulation_u_H1_spaces_LGPBE}
% \begin{aligned}
% &\text{Find }u\in H_{\overline g}^1(\Omega) \text{ such that }\\
% &\int_{\Omega}{\epsilon \nabla u\cdot \nabla v \dd x}+\int_{\Omega}{\overline{m}^2uv \dd x}=\int_{\Omega}{\bm f\cdot\nabla v \dd x} + \int_{\Omega}{f_0v \dd x}\,\text{ for all } v\in H_0^1(\Omega)
% \end{aligned}
% \end{equation} 
% with $f_0$, $\bm f$, $\overline g$ defined by either \eqref{f0_barg_f_expressions_2-term_splitting} or \eqref{f0_barg_f_expressions_3-term_splitting} for the 2- or 3-term splittings, respectively. Moreover, since $\Gamma\in C^1$, then $\phi$ is unique.
\end{theorem}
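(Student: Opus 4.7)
The plan is to invoke Lax--Milgram to solve the $H^1$ problem \eqref{common_form_weak_formulation_u_H1_spaces_LGPBE} for the regular part $u$, then use the key identity \eqref{Weak_formulation_for_G} for the Coulomb potential $G$ to lift $u$ to a full potential $\phi$ satisfying \eqref{LGPBE_dimensionless_Weak_formulation_W1p_spaces}, and finally appeal to Theorem~\ref{Theorem_uniqueness_of_linear_elliptic_problems_with_diffusion_and_measure_rhs} to obtain uniqueness at the level of $\phi$. The uniqueness step will automatically identify the potentials produced by the two splittings.

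First I verify well-posedness of \eqref{common_form_weak_formulation_u_H1_spaces_LGPBE}. In both splittings the data satisfy $\bm f\in [L^2(\Omega)]^d$ and $f_0\in L^2(\Omega)$: in the 2-term case because $G$ is harmonic on a neighborhood of $\overline{\Omega_s}$ so $\nabla G$ is bounded there, and in the 3-term case because $u^H\in H^1(\Omega)$ by construction; the factors $\overline m^2$, $\epsilon$, $\ell$ are bounded. The boundary datum $\overline g$ lies in $H^{1/2}(\partial\Omega)$ since $g_\Omega\in C^{0,1}(\partial\Omega)$ and, for the 2-term case, $G$ is smooth near $\partial\Omega\subset\Omega_s$. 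Subtracting an $H^1$-extension of $\overline g$ produces a homogeneous problem whose bilinear form is continuous and coercive on $H^1_0(\Omega)$ thanks to \eqref{uniform_ellipticity_of_A}, Poincar\'e, and the nonnegativity of $\overline m^2$, so Lax--Milgram yields a unique $u\in H^1_{\overline g}(\Omega)$.

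Next I check that the candidate $\phi$ --- equal to $G+u$ in the 2-term case and $G+u^H+u$ in the 3-term case --- satisfies \eqref{LGPBE_dimensionless_Weak_formulation_W1p_spaces}. Since $G\in\Mb_G$ and $u^H\in H^1(\Omega)\subset\Mb$, one has $\phi\in\Mb_{g_\Omega}$. For $v\in\Nb\subset H^1_0(\Omega)$ I test \eqref{common_form_weak_formulation_u_H1_spaces_LGPBE} with $v$ and add to both sides the quantities $\int_\Omega\epsilon\nabla G\cdot\nabla v\dd x+\int_\Omega\overline m^2 Gv\dd x$ (and, in the 3-term case, also $\int_\Omega\epsilon\nabla u^H\cdot\nabla v\dd x$). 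The principal-part contribution of $G$ splits as $\int_\Omega\epsilon_m\nabla G\cdot\nabla v\dd x - \int_{\Omega_s}(\epsilon_m-\epsilon_s)\nabla G\cdot\nabla v\dd x$, whose first piece becomes $\langle\Fb,v\rangle$ by \eqref{Weak_formulation_for_G}. The remaining terms cancel exactly against the entries of $(\bm f,f_0)$ from \eqref{f0_barg_f_expressions_2-term_splitting} (resp.\ \eqref{f0_barg_f_expressions_3-term_splitting}, where one additionally uses $u^H=-G$ on $\Omega_s$ to rewrite the $\Omega_s$ part of $\int\epsilon\nabla u^H\cdot\nabla v$), leaving precisely \eqref{LGPBE_dimensionless_Weak_formulation_W1p_spaces}.

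Finally, uniqueness of $\phi$ will follow from Theorem~\ref{Theorem_uniqueness_of_linear_elliptic_problems_with_diffusion_and_measure_rhs} applied with $\bm A=\epsilon\bm I$, which is piecewise uniformly continuous across the $C^1$ interface $\Gamma$ by Assumption~\ref{Assumption_Domain_permittivity}; with $c=\overline m^2\ge 0$ in $L^\infty(\Omega)$; with measure data $\mu:=\Fb+\ell\,\Lb^d\in\Mp(\Omega)$, bounded since $\Omega$ is bounded and $\ell\in L^\infty$; and with boundary datum $g_\Omega\in C^{0,1}(\partial\Omega)\subset H^{1/2}(\partial\Omega)$. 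This uniqueness forces the two representations of $\phi$ just constructed to coincide. The only mildly delicate point, rather than a real obstacle, is the bookkeeping in the previous paragraph: keeping track of which pieces of $\epsilon\nabla G$ and $\epsilon\nabla u^H$ are absorbed by $(\bm f,f_0)$ and which recombine through \eqref{Weak_formulation_for_G} to reproduce the distributional $\Fb$.
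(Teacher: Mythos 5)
Your proof follows the same route as the paper: Lax--Milgram for the regular component $u$ in \eqref{common_form_weak_formulation_u_H1_spaces_LGPBE} (after homogenising via a Lipschitz extension of $\overline g$), lifting to the full potential by adding the Coulomb identity \eqref{Weak_formulation_for_G} and checking that the right-hand sides recombine to $\langle\Fb,v\rangle+\int_\Omega\ell v\dd x$, and invoking Theorem~\ref{Theorem_uniqueness_of_linear_elliptic_problems_with_diffusion_and_measure_rhs} (valid here since $\Gamma\in C^1$, $\bm A=\epsilon\bm I$, $c=\overline m^2\ge 0$) to identify the two particular solutions. Your accounting of where the terms of $(\bm f, f_0)$ cancel against the split of $\int_\Omega\epsilon\nabla G\cdot\nabla v\dd x$ (and, in the 3-term case, of $\int_\Omega\epsilon\nabla u^H\cdot\nabla v\dd x$ via $u^H=-G$ on $\Omega_s$) is exactly the computation the paper summarises by ``adding together \eqref{Weak_formulation_for_G} and \eqref{common_form_weak_formulation_u_H1_spaces_LGPBE}'', so the proof is correct and essentially identical.
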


\begin{proof}
We will only show the existence of a solution $\phi$ of \eqref{LGPBE_dimensionless_Weak_formulation_W1p_spaces} by using the 2-term splitting where  $f_0,\,\bm f,\,\overline g$ are defined by \eqref{f0_barg_f_expressions_2-term_splitting}, the case of the 3-term splitting is similar. 

By using an extension of $\overline g$ (see Lemma~\ref{Lemma_extension_pf_Lipschitz}) and linearity we can reduce to homogeneous boundary conditions and use the Lax-Milgram Theorem to obtain a unique solution $u\in H_{\overline g}^1(\Omega) = H_{g_\Omega-G}^1(\Omega)$ of \eqref{common_form_weak_formulation_u_H1_spaces_LGPBE}. It is clear that $u$ is also in $\Mb_{g_\Omega-G}$ since $p<\frac{d}{d-1}\leq2$ and $W^{1,2}(\Omega)\equiv H^1(\Omega)$. Therefore, $G+u\in \Mb_{g_\Omega}$. Moreover, $H_0^1(\Omega)\supset \Nb$, and therefore \eqref{common_form_weak_formulation_u_H1_spaces_LGPBE} is valid for all test functions $v\in \Nb$. By adding together \eqref{Weak_formulation_for_G} and \eqref{common_form_weak_formulation_u_H1_spaces_LGPBE} we conclude that $\phi=G+u$ satisfies the weak formulation \eqref{LGPBE_dimensionless_Weak_formulation_W1p_spaces}. 
% For uniqueness, it is enough to show that if $\phi$ satisfies the homogeneous problem \eqref{LGPBE_dimensionless_Weak_formulation_W1p_spaces} with $\Fb=0$ and $\ell=0$ then $\phi=0$. This is proved in Theorem \ref{Theorem_uniqueness_of_linear_elliptic_problems_with_diffusion_and_measure_rhs}.
\end{proof}
% Let us note that the regularity assumption $\Gamma\in C^1$ is only needed for the uniqueness of $\phi$, but not for the existence. 
Let us note that even without the regularity assumption $\Gamma \in C^1$ one would still get particular solutions $\phi$ given by the two splittings above. However, it would not be clear if these are equal.

\section{Existence and uniqueness for the nonlinear GPBE}\label{Section_Existence_and_uniqueness_of_solution}
\label{Section_The_GPBE}		
For existence, our strategy is to consider either the 2-term or 3-term splitting to separate the effect of the singular right hand side. For the regular components of these, since $H^1 \subset \Mb$ and $\Nb\subset H_0^1$ it is enough to consider an $H^1$ formulation, of which we give a complete treatment. This treatment still requires some care. Since the nonlinearity $b$ of \eqref{weak_formulation_General_PBE_W1p_spaces} has exponential growth, the functional in the minimization problem corresponding to this $H^1$ formulation is not differentiable, so its minimizers do not automatically satisfy the formulation with $H^1_0$ test functions. To conclude that they do, we need an a priori $L^\infty$ estimate for them, which we prove in a slightly more general situation than the one of the GPBE. For uniqueness, we work directly on the original formulation \eqref{weak_formulation_General_PBE_W1p_spaces}, which is the best possible scenario.

\subsection{Existence of a full potential \texorpdfstring{$\phi$}{phi}}\label{Section_Existence_GPBE}
Equations \eqref{Weak_formulation_for_uregular_General_PBE_W1p_spaces} and \eqref{Weak_formulation_for_uregular_3-term_splitting_General_PBE_W1p_spaces} for the regular component $u$ can be written in one common form:
\begin{equation} \label{general_form_of_the_weak_formulation_for_u_2_and_3_term_splitting_W1p_spaces}
\begin{aligned}
&\text{Find }u\in \Mb_{\overline g} \text{ such that } b(x,u+w)v\in L^1(\Omega) \,\text{ for all } v\in \Nb \text{ and }\\
&a(u,v)+\int_{\Omega}{b(x,u+w)v \dd x}=\int_{\Omega}{{\bm f}\cdot\nabla v \dd x}\, \text{ for all } v\in \Nb,
\end{aligned}
\end{equation} 
where $\Mb_{\overline g},\, \Nb$ are as defined in \eqref{Definition_Mg} and \eqref{Definition_M_and_N}, denoting $a(u,v):=\int_{\Omega}{\epsilon\nabla u\cdot\nabla v \dd x}$, $w\in L^\infty(\Omega_{ions})$, ${\bm f}=(f_1,f_2,\ldots, f_d)\in \left[L^s(\Omega)\right]^d$ with\footnote{For the 2-term splitting, ${\bm f}$ is obviously in $\left[L^s(\Omega)\right]^d$ for some $s>d$ since $G$ is smooth in $\overline \Omega_s$ and $\epsilon_s\in C^{0,1}(\overline \Omega_s)$. In the case of the 3-term splitting, from Proposition~\ref{proposition_u^H_is_in_W1s_s_ge_d} it follows that $\nabla u^H\in \left[L^s(\Omega_m)\right]^d$ for some $s>d$ since $\Gamma \in C^1$.} $s>d$, and $\overline g$ specifies a Dirichlet boundary condition for $u$ on $\partial\Omega$. In the case of the 2-term splitting we have
\begin{equation} \label{w_barg_f_expressions_2-term_splitting}
w=G, \quad {\bm f}={\bm f}_{\Gb_2}:=\chi_{\Omega_s}(\epsilon_m-\epsilon_s)\nabla G,\quad \text{ and }\quad\overline{g}=g_\Omega-G \text{ on }\partial\Omega,
\end{equation} 
 whereas in the case of the 3-term splitting we have
 \begin{equation} \label{w_barg_f_expressions_3-term_splitting}
 w=0, \quad {\bm f}={\bm f}_{\Gb_3}:=-\chi_{\Omega_m}\epsilon_m\nabla u^H+\chi_{\Omega_s}\epsilon_m\nabla G,\quad \text{ and }\quad \overline{g}=g_\Omega \text{ on }\partial\Omega.
 \end{equation} 

Similarly, equations \eqref{3_weak_formulations_2_term_splitting_uregular} and \eqref{Weak_formulation_for_uregular_3-term_splitting_PBE_H1}, which determine \emph{particular} representatives for the regular component $u$, can also be written in one common form:
\begin{equation}\label{general_form_of_the_weak_formulation_for_u_2_and_3_term_splitting_H1_spaces}
\begin{aligned}
&\text{Find }u\in H_{\overline g}^1(\Omega) \text{ such that } b(x,u+w)v\in L^1(\Omega) \,\text{ for all } v\in \Wb \text{ and }\\
&a(u,v)+\int_{\Omega}{b(x,u+w)v \dd x}=\int_{\Omega}{{\bm f}\cdot\nabla v \dd x}\, \text{ for all } v\in \Wb,
\end{aligned}\tag{RCH1}
\end{equation} 
where we will consider the three test spaces 
\begin{equation}\label{what_is_Wb}
\Wb=H_0^1(\Omega), \ \ \Wb=H_0^1(\Omega)\cap L^\infty(\Omega)\ \text{ and }\ \Wb=C_c^\infty(\Omega).
\end{equation}
Of course, the larger the test space $\Wb$ the harder it will be to prove existence, and the other way around for uniqueness. For the first two, we have the inclusion $\Nb\subset \Wb$, which combined with $H_{\overline g}^1(\Omega)\subset \Mb_{\overline g}$ makes it clear that if $u\in H_{\overline g}^1(\Omega)$ solves \eqref{general_form_of_the_weak_formulation_for_u_2_and_3_term_splitting_H1_spaces} with either $\Wb=H_0^1(\Omega)$ or $\Wb=H_0^1(\Omega)\cap L^\infty(\Omega)$, then $u$ also solves \eqref{general_form_of_the_weak_formulation_for_u_2_and_3_term_splitting_W1p_spaces}. Consequently, we obtain a particular solution $\phi$ of \eqref{weak_formulation_General_PBE_W1p_spaces} through the formula $\phi=G+u$ in the case where $w, {\bm f}, \overline g$ are given by \eqref{w_barg_f_expressions_2-term_splitting} and through the formula $\phi=G+u^H+u$ in the case where $w, {\bm f}, \overline g$ are given by \eqref{w_barg_f_expressions_3-term_splitting}.

Thus, our goal in this section is to show existence and uniqueness of a solution $u$ to the weak formulation \eqref{general_form_of_the_weak_formulation_for_u_2_and_3_term_splitting_H1_spaces}. We will mainly work with the first two spaces in \eqref{what_is_Wb}, while the third represents distributional solutions where we will see that uniqueness can still be obtained.

\begin{remark}\label{Remark_exponents_of_H01_functions_in_d_2_and_3}
If $d=2$ then by the Moser-Trudinger inequality $\me^{u_0}\in L^2(\Omega)$ for all $u_0\in H_0^1(\Omega)$ (see \cite{Trudinger_1967,Best_constants_in_some_exponential_Sobolev_inequalities}) and, therefore, $e^{u_{\overline g}+u_0}\in L^2(\Omega)$ with $u_{\overline g}$ an extension of $\overline g$ as in Lemma~\ref{Lemma_extension_pf_Lipschitz}. Consequently, for $d=2$, $b(x,u+w)\in L^2(\Omega)$ for all $u\in H_{\overline g}^1(\Omega)$ and the weak formulations \eqref{general_form_of_the_weak_formulation_for_u_2_and_3_term_splitting_H1_spaces} with $\Wb=H_0^1(\Omega)\cap L^\infty(\Omega)$ and with $\Wb=H_0^1(\Omega)$ are equivalent by a density argument. For $d\ge 3$ the situation is more complicated: consider for example $u=\ln{\frac{1}{|x|^d}}\in H_0^1(B(0,1))$ on the unit ball $B(0,1) \subset \mathbb R^d$, for which $\me^u\notin L^1(B(0,1))$. This also means that the condition $b(x,u+w)v \in L^1(\Omega)$ for all $v\in \Wb$ used in \eqref{general_form_of_the_weak_formulation_for_u_2_and_3_term_splitting_H1_spaces} is not superfluous.
\end{remark}

We prove existence by considering the natural associated convex energy, whose minimizers directly provide solutions for \eqref{general_form_of_the_weak_formulation_for_u_2_and_3_term_splitting_H1_spaces} with $\Wb=H_0^1(\Omega)\cap L^\infty(\Omega)$. To pass to the larger test space $\Wb=H_0^1(\Omega)$ we will prove boundedness of these minimizers in Section \ref{Section_A_priori_Linfty_estimate}.

Let us consider some basic properties of the nonlinearity $b$. Since $\frac{d}{dt}b(x,t)\ge0$ for every $x\in\Omega$ it follows that $b(x,\cdot)$ is monotone increasing. This in particular implies that 
\begin{equation} \label{monotonicity_of_b}
\big(b(x,t_1)-b(x,t_2)\big)\left(t_1-t_2\right)\ge 0,\,\mathrm{for}\ \mathrm{all}\  t_1,t_2\in\mathbb R\ \mathrm{and}\  x\in\Omega.
\end{equation}
For semilinear equations, in addition to monotonicity a sign condition (ensuring that the nonlinearity always has the same sign as the solution) is often assumed in the literature. Since $b(x,0)=-\frac{4\pi e_0^2}{k_BT} \sum_{j=1}^{N_{ions}}{\overline M_j(x)\xi_j}$, when the charge neutrality condition \eqref{charge_neutrality_condition} is satisfied it follows that $b(x,0)=0$ for all $x\in\Omega$. If additionally one uses the 3-term splitting so that $w=0$, we would have such a sign condition for \eqref{general_form_of_the_weak_formulation_for_u_2_and_3_term_splitting_H1_spaces}. However as mentioned in previous sections, we do not impose charge neutrality from the outset and would like to treat both splitting schemes simultaneously, so the sign condition may fail. This poses some difficulties for the boundedness estimates, the context of which is discussed at the start of Section \ref{Section_A_priori_Linfty_estimate}.

An important remark is that truncation methods as used in \cite{Webb_1980} (an easy calculation shows that the assumptions G1 and G2 postulated there are satisfied for the nonlinearity $b$) would also provide existence of solutions for \eqref{general_form_of_the_weak_formulation_for_u_2_and_3_term_splitting_H1_spaces} directly and consequently for \eqref{general_form_of_the_weak_formulation_for_u_2_and_3_term_splitting_W1p_spaces}. Our main focus is therefore on the fact that we may obtain bounded solutions, which on the one hand makes the uniqueness results in Section \ref{Section_uniqueness_for_GPBE} applicable, and on the other leads to weak formulations tested with $\Wb = H^1_0(\Omega)$. These are important in practical applications such as the reliable numerical solution of this equation through duality methods.

\subsubsection{Uniqueness of solutions of \eqref{general_form_of_the_weak_formulation_for_u_2_and_3_term_splitting_H1_spaces} for all test spaces}\label{Section_Uniqueness_in_H1}
First, we prove uniqueness of a solution to \eqref{general_form_of_the_weak_formulation_for_u_2_and_3_term_splitting_H1_spaces} for all three choices of the test space $\Wb$ in \eqref{what_is_Wb}. Suppose that $u_1$ and $u_2$ are two solutions of \eqref{general_form_of_the_weak_formulation_for_u_2_and_3_term_splitting_H1_spaces}. Then, we have
\begin{equation} \label{weak_form_2_and_3_term_splitting_difference_of_two_solutions_test_space_V}
a(u_1-u_2,v)+\int_{\Omega}{\left(b(x,u_1+w)-b(x,u_2+w)\right) v \dd x}=0\,\mathrm{for}\ \mathrm{all}\  v\in \Wb.
\end{equation} 
In the case of $\Wb=H_0^1(\Omega)$, $u_1-u_2\in \Wb$ and thus we can test \eqref{weak_form_2_and_3_term_splitting_difference_of_two_solutions_test_space_V} with $v:=u_1-u_2$ to obtain
\begin{equation*}
a(u_1-u_2,u_1-u_2)+\int_{\Omega}{\left(b(x,u_1+w)-b(x,u_2+w)\right)(u_1-u_2) \dd x}=0.
\end{equation*}
Since $a(\cdot,\cdot)$ is coercive and $b(x,\cdot)$ is monotone increasing, we obtain $u_1-u_2=0$.

In the case when $\Wb=H_0^1(\Omega)\cap L^\infty(\Omega)$ we can test with the (truncated) test functions $T_k(u_1-u_2)\in H_0^1(\Omega)\cap L^\infty(\Omega),\,k\ge 0$, where $T_k(s):=\max\{-k,\min\{k,s\}\}$ and use the monotonicity of $b(x,
\cdot)$ and the coercivity of $a(
\cdot,\cdot)$ to obtain $u_1-u_2=0$. This method and the method that we mentioned for the case $\Wb=H_0^1(\Omega)$ do not work when $\Wb=C_c^\infty(\Omega)$ because neither the difference $u_1-u_2$ of two weak solutions nor its truncations $T_k(u_1-u_2)$ are  necessarily in $C_c^\infty(\Omega)$. We overcome this difficulty by applying Theorem~\ref{Thm_Brezis_Browder_for_the_extension_of_bdd_linear_functional_1978}. 
For this we consider two solutions $u_1$ and $u_2$, so that
\begin{equation} \label{weak_form_2_and_3_term_splitting_difference_of_two_solutions_test_space_C0_infty}
a(u_1-u_2,v)+\int_{\Omega}{\left(b(x,u_1+w)-b(x,u_2+w)\right) v \dd x}=0\ \mathrm{for}\ \mathrm{all}\  v\in C_c^\infty(\Omega).
\end{equation} 
Since $a(u_1-u_2,\cdot)$ defines a bounded linear functional over $H_0^1(\Omega)$, the functional $T_b$ defined by the formula $\langle T_b,v\rangle:=\int_{\Omega}{\left(b(x,u_1+w)-b(x,u_2+w)\right)v \dd x}$ for all $v\in C_c^\infty(\Omega)$ satisfies the condition $T_b\in H^{-1}(\Omega)\cap L_{loc}^1(\Omega)$ in Theorem~\ref{Thm_Brezis_Browder_for_the_extension_of_bdd_linear_functional_1978}. By using the monotonicity of $b(x,\cdot)$ we see that $\left(b(x,u_1+w)-b(x,u_2+w)\right) (u_1-u_2)\ge 0=:f(x)\in L^1(\Omega)$. Therefore by Theorem~\ref{Thm_Brezis_Browder_for_the_extension_of_bdd_linear_functional_1978} (see also Remark \ref{remark_on_the_theorem_of_Brezis_and_Browder_for_the_extension_of_bdd_linear_functional_1978}) it follows that \[\left(b(x,u_1+w)-b(x,u_2+w)\right)(u_1-u_2)\in L^1(\Omega)\] and the duality product ${\langle T_b, u_1-u_2\rangle_{H^{-1}(\Omega)\times H_0^1(\Omega)}}$ coincides with 
\[
\int_{\Omega}{\left(b(x,u_1+w)-b(x,u_2+w)\right) (u_1-u_2) \dd x}. 
\]
 This means that 
\begin{equation} \label{weak_form_2_and_3_term_splitting_difference_of_two_solutions_tested_with_u1_minus_u2}
a(u_1-u_2,u_1-u_2)+\int_{\Omega}{\left(b(x,u_1+w)-b(x,u_2+w)\right) (u_1-u_2) \dd x}=0,
\end{equation} 
which implies $u_1-u_2=0$. Of course, this approach can also be applied to show uniqueness when $\Wb=H_0^1(\Omega)\cap L^\infty(\Omega)$ instead of using the truncations $T_k(u_1-u_2)$. The uniqueness of a solution to \eqref{general_form_of_the_weak_formulation_for_u_2_and_3_term_splitting_H1_spaces} with all three choices of the test space $\Wb$ is now clear.

\subsubsection{Existence of a solution of \eqref{general_form_of_the_weak_formulation_for_u_2_and_3_term_splitting_H1_spaces} with the test spaces $H^1_0 \cap L^\infty$ and $C^\infty_c$}\label{Section_Existence}
We consider the variational problem: 
\begin{equation} \label{Variational_problem_for_J}
\text{Find } u_{\rm{min}}\in H_{\overline{g}}^1(\Omega)\text{ such that } J(u_{\rm{min}})=\min_{v\in H_{\overline{g}}^1(\Omega)}{J(v)},
\end{equation} 
where the functional $J:H_{\overline g}^1(\Omega)\to\mathbb R\cup \{+\infty\}$ is defined by 
\begin{equation} \label{definition_of_J_general_form_General_PBE}
J(v):=\left\{
\begin{aligned}
&\frac{1}{2}a(v,v)+\int_{\Omega}{B(x,v+w)\dd x}-\int_{\Omega}{{\bm f}\cdot \nabla v \dd x},\,\text{ if }  B(x,v+w)\in L^1(\Omega),\\
&+\infty, \text{ if } B(x,v+w) \notin L^1(\Omega)
\end{aligned}
\right.
\end{equation} 
with $B(x,\cdot)$ denoting an antiderivative of the monotone nonlinearity $b(x,\cdot)$ of the GPBE defined in \eqref{definition_b_GPBE}, given by
\begin{equation} \label{nonlinearity_B}
B(x,t):=\frac{4\pi e_0^2}{k_BT} \sum_{j=1}^{N_{ions}}{\overline M_j(x) \me^{-\xi_jt}}\ge 0,\,\mathrm{for}\ \mathrm{all}\  x\in\Omega\ \mathrm{and}\ t\in\mathbb R
\end{equation}
which is clearly convex in $t$, and $w$, ${\bm f}$, $\overline g$ defined either in \eqref{w_barg_f_expressions_2-term_splitting} or \eqref{w_barg_f_expressions_3-term_splitting}. 

We have seen in Remark \ref{Remark_exponents_of_H01_functions_in_d_2_and_3} that if $d\leq 2$ then $\me^v\in L^2(\Omega)$ for all $v\in H_0^1(\Omega)$ and therefore (for $\overline g=0$) $\dom(J)=\left\{v\in H_0^1(\Omega)\text{ such that } J(v)<\infty\right\}  = H_0^1(\Omega)$. However, in dimension $d=3$, $\dom(J)$ is only a convex set and not a linear space (see Section~\ref{Sections_remarks_on_J}). In fact, ${\rm dom}(J)$ is also not closed, since it contains $C_c^\infty(\Omega)$ which is dense in $H_0^1(\Omega)$. If ${\rm dom}(J)$ were closed, it would coincide with $H_0^1(\Omega)$ and we know by Remark \ref{Remark_exponents_of_H01_functions_in_d_2_and_3} that this is not true in dimension $d\ge 3$.

\begin{theorem}\label{Thm_existence_and_uniqueness_of_minimizer_of_J_General_PBE}
Problem \eqref{Variational_problem_for_J} has a unique solution $u_{\rm{min}}\in H_{\overline{g}}^1(\Omega)$.
\end{theorem}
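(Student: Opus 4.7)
The plan is to apply the direct method of the calculus of variations, exploiting the convexity of $J$. The four points to verify are: $J$ is proper, coercive on $H^1_{\overline g}(\Omega)$, (sequentially) weakly lower semicontinuous, and strictly convex.

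\textbf{Properness.} Let $u_{\overline g}\in W^{1,\infty}(\Omega)$ be the Lipschitz extension of $\overline g$ from Lemma~\ref{Lemma_extension_pf_Lipschitz}. Since $\overline M_j = \chi_{\Omega_{ions}} M_j$ is supported in $\Omega_{ions}$, which is strictly separated from the point charges $\{x_i\}$, the function $w$ (either $G$ or $0$) is bounded on the support of $\overline M_j$. Therefore $B(x,u_{\overline g}+w)\in L^\infty(\Omega)\subset L^1(\Omega)$, so $J(u_{\overline g})<\infty$ and $\mathrm{dom}(J)\neq\emptyset$.

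\textbf{Coercivity.} Writing any $v\in H^1_{\overline g}(\Omega)$ as $v=u_{\overline g}+v_0$ with $v_0\in H^1_0(\Omega)$, the coercivity \eqref{uniform_ellipticity_of_A} of $\epsilon$ gives $a(v,v)\ge \underline\alpha\|\nabla v_0\|_{L^2}^2 - C_1\|\nabla v_0\|_{L^2} - C_2$ after Young's inequality on the cross term $2a(u_{\overline g},v_0)$. Since $\bm f\in [L^s(\Omega)]^d\subset [L^2(\Omega)]^d$, Cauchy--Schwarz gives $\bigl|\int_\Omega \bm f\cdot\nabla v\,\dd x\bigr|\le C_3\|\nabla v_0\|_{L^2}+C_4$. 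Using $B\ge 0$,
\begin{equation}
J(v)\ge \frac{\underline\alpha}{2}\|\nabla v_0\|_{L^2}^2 - C\|\nabla v_0\|_{L^2} - C',
\end{equation}
which, together with the Poincar\'e inequality on $H^1_0(\Omega)$, forces $J(v)\to +\infty$ as $\|v\|_{H^1(\Omega)}\to\infty$.

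\textbf{Weak lower semicontinuity and strict convexity.} The map $v\mapsto \frac{1}{2}a(v,v)$ is quadratic with a coercive form, hence continuous and strictly convex on $H^1_{\overline g}(\Omega)$. The map $v\mapsto -\int_\Omega \bm f\cdot\nabla v\,\dd x$ is linear and bounded, hence weakly continuous. For the nonlinear term, $t\mapsto B(x,t)$ is convex and nonnegative; given any sequence $v_n\rightharpoonup v$ in $H^1(\Omega)$, Rellich--Kondrachov gives $v_n\to v$ in $L^2(\Omega)$, and extracting a further subsequence we get pointwise a.e.\ convergence of $v_n+w$ to $v+w$. Fatou's lemma then yields $\int_\Omega B(x,v+w)\,\dd x \le \liminf_n \int_\Omega B(x,v_n+w)\,\dd x$. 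Summing, $J$ is sequentially weakly lower semicontinuous on $H^1_{\overline g}(\Omega)$, and strictly convex since the quadratic part $\frac{1}{2}a(\cdot,\cdot)$ is already strictly convex and the other two summands are convex.

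\textbf{Conclusion.} Take a minimizing sequence $\{v_n\}\subset H^1_{\overline g}(\Omega)$. Coercivity bounds it in $H^1(\Omega)$, so (passing to a subsequence) $v_n\rightharpoonup u_{\min}\in H^1_{\overline g}(\Omega)$; weak closedness of the affine set $H^1_{\overline g}(\Omega)$ (continuity of the trace) secures the boundary condition. Weak lower semicontinuity gives $J(u_{\min})\le \liminf_n J(v_n) = \inf_{H^1_{\overline g}} J$, so $u_{\min}$ is a minimizer. Strict convexity of $J$ gives uniqueness. The main subtlety is ensuring that $J$ is proper and that the nonlinear term behaves well under weak convergence — both are resolved by the geometric observation that $\mathrm{supp}(\overline M_j)=\Omega_{ions}$ is disjoint from $\{x_i\}$, so $w$ is effectively bounded wherever $B$ is active.
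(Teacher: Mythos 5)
Your proof is correct and follows essentially the same route as the paper: the direct method, with weak lower semicontinuity of the nonlinear term obtained via Rellich--Kondrachov compactness, pointwise a.e.\ convergence along a subsequence, and Fatou's lemma, together with coercivity via Poincar\'e and uniqueness from strict convexity of the Dirichlet form. The one place you are arguably tighter than the paper is properness: you explicitly exhibit the Lipschitz extension $u_{\overline g}\in W^{1,\infty}(\Omega)$ and note that $w$ is bounded on $\Omega_{ions}$ (where $\overline M_j$ is supported), so $B(x,u_{\overline g}+w)\in L^\infty(\Omega)$, whereas the paper simply remarks that ``$J(0)$ is finite,'' which is slightly loose when $\overline g\neq 0$.
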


\begin{proof}
Since $\dom(J)$ is convex and $J$ is also convex over $\dom(J)$ it follows that $J$ is convex over $H_{\overline g}^1(\Omega)$. To show existence of a minimizer of $J$ over the set $H_{\overline g}^1(\Omega)$ it is enough to verify the following assertions:
\begin{itemize}
\item[(1)] $H_{\overline g}^1(\Omega)$ is a closed convex set in $H^1(\Omega)$;
\item[(2)] $J$ is proper, i.e., $J$ is not identically equal to $+\infty$ and does not take the value $-\infty$;
\item[(3)] $J$ is sequentially weakly lower semicontinuous (s.w.l.s.c.), i.e., if $\{v_n\}_{n=1}^{\infty}\subset H_{\overline g}^1(\Omega)$ and $v_n\rightharpoonup v$ (weakly in $H_{\overline g}^1(\Omega)$) then $J(v)\leq \liminf_{n\to\infty}{J(v_n)}$;
\item [(4)] $J$ is coercive, i.e., $\lim_{n\to\infty}{J(v_n)}=+\infty$ whenever $\|v_n\|_{H^1(\Omega)}\to \infty$.
\end{itemize}
That  $H_{\overline{g}}^1(\Omega)$ is norm closed in $H^1(\Omega)$ and convex follows easily by the linearity and boundedness of the trace operator $\gamma_2$. 
Assertion (2) is obvious since $\int_{\Omega}{B(x,u+w)\dd x}\ge 0$ and $J(0)$ is finite. To see that (3) is fulfilled, notice that $J$ is the sum of the functionals $v\mapsto A(v):=\frac{1}{2}a(v,v)$, $v\mapsto \int_{\Omega}{B(x,v+w) \dd x}$, and $v\mapsto -\int_{\Omega}{\bm f\cdot \nabla v \dd x}$. The first one is convex and Gateaux differentiable, and therefore s.w.l.s.c. (for the proof of this implication, see, e.g. Corollary VII.2.4 in \cite{Showalter}).

However, for $d=3$, the functional $v\mapsto \int_{\Omega}{B(x,v+w) \dd x}$ is not Gateaux differentiable or even continuous (see Section~\ref{Sections_remarks_on_J}). Nevertheless, one can show that this functional is s.w.l.s.c. using Fatou's lemma and the compact embedding of $H^1(\Omega)$ into $L^2(\Omega)$ as follows. Let $\{v_n\}_{n=1}^{\infty}\subset H^1(\Omega)$ be a sequence which converges weakly in $H^1(\Omega)$ to an element $v\in H^1(\Omega)$, i.e., $v_n\rightharpoonup v$. Since the embedding $H^1(\Omega)\hookrightarrow L^2(\Omega)$ is compact it follows that $v_n\to v$ (strongly) in $L^2(\Omega)$, and therefore we can extract a pointwise almost everywhere convergent subsequence $v_{n_m}(x)\to v(x)$ (see Theorem 4.9 in \cite{Brezis_FA}). Since $B(x,\cdot)$ is a continuous function for any $x\in\Omega$ and $x\mapsto B(x,t)$ is measurable for any $t\in\mathbb R$ it means that $B$ is a Carath\'eodory function and as a consequence the function $x\mapsto B(x,v_{n_m}(x)+w(x))$ is measurable for all $k\in\mathbb N$ (see Proposition 3.7 in \cite{Dacorogna}).
By noting that $B(x,z(x)+w(x))\ge 0$ for all $z\in H^1(\Omega)$ and using the fact that $B(x,\cdot)$ is a continuous function for any $x\in\Omega$, from Fatou's lemma we obtain
\begin{equation}\label{Applying_Fatous_Lemma}
\begin{aligned}
\liminf_{m\to \infty}{\int_{\Omega}{B(x,v_{n_m}(x)+w(x))}\dd x}&\ge \int_{\Omega}{\liminf_{m\to \infty}{B(x,v_{n_m}(x)+w(x))}\dd x}\\
&=\int_{\Omega}{B(x,v(x)+w(x))\dd x}.
\end{aligned}
\end{equation} 
Now it is clear that if $\{v_{n_m}\}_{m=1}^\infty$ is an arbitrary subsequence of $\{v_n\}_{n=1}^\infty$, then there exists a further subsequence $\{v_{n_{m_s}}\}_{s=1}^{\infty}$ for which \eqref{Applying_Fatous_Lemma} is satisfied. This means that in fact \eqref{Applying_Fatous_Lemma} is also satisfied for the whole sequence $\{v_n\}_{n=1}^\infty$, and hence $v\mapsto \int_{\Omega}{B(x,v+w) \dd x}$ is s.l.w.s.c.

It is left to see that $J$ is coercive over $H_{\overline{g}}^1(\Omega)$. Let $u_{\overline{g}}\in H^1(\Omega)$ be such that $\gamma_2(u_{\overline{g}})=\overline{g}$ on $\partial\Omega$. For any $v\in H_{\overline{g}}^1(\Omega)$, we have $\gamma_2(v-u_{\overline{g}})=0$. Since $\Omega$ is a bounded Lipschitz domain, it follows that $v-u_{\overline{g}}\in H_0^1(\Omega)$. By applying Poincar{\'e}'s inequality we obtain
\begin{equation} \label{triangle_inequalities_Hg1_space}
\begin{aligned}
\abs{\|v\|_{H^1(\Omega)}-\|u_{\overline{g}}\|_{H^1(\Omega)}}&\leq \|v-u_{\overline{g}}\|_{H^1(\Omega)}\leq \sqrt{1+C_P^2}\|\nabla (v-u_{\overline{g}})\|_{L^2(\Omega)}\\
&\leq \sqrt{1+C_P^2}\left(\|\nabla v\|_{L^2(\Omega)}+\|\nabla u_{\overline{g}}\|_{L^2(\Omega)}\right).
\end{aligned}
\end{equation} 
After squaring both sides of \eqref{triangle_inequalities_Hg1_space} and using the inequality $2ab\leq a^2+b^2$ for $a,b\in\mathbb R$ we obtain the estimate
\begin{equation}\label{lower_bound_H1_seminorm_in_Hg1_space}
\|v\|_{H^1(\Omega)}^2-2\|v\|_{H^1(\Omega)}\|u_{\overline{g}}\|_{H^1(\Omega)}+\|u_{\overline{g}}\|_{H^1(\Omega)}\leq 2\left(1+C_P^2\right)\left(\|\nabla v\|_{L^2(\Omega)}^2+\|\nabla u_{\overline{g}}\|_{L^2(\Omega)}^2\right).
\end{equation} 
Now, coercivity of $J$ follows by recalling that $B(x,t)\ge 0$ for all $x\in\Omega$ and $t\in\mathbb R$ and using \eqref{lower_bound_H1_seminorm_in_Hg1_space}:  
\begin{equation}\label{coercivity_of_J_General_PBE}
\begin{aligned}
J(v)&=\frac{1}{2}a(v,v)+\int_{\Omega}{B(x,v+w)\dd x}-\int_{\Omega}{{\bm f}\cdot \nabla v \dd x}\ge\frac{\epsilon_{\rm{min}}}{2}\|\nabla v\|_{L^2(\Omega)}^2-\|{\bm f}\|_{L^2(\Omega)}\|\nabla v\|_{L^2(\Omega)}\\
&\ge \frac{\epsilon_{\min}}{4\left(1+C_P^2\right)}\left(\|v\|_{H^1(\Omega)}^2-2\|v\|_{H^1(\Omega)}\|u_{\overline{g}}\|_{H^1(\Omega)}+\|u_{\overline{g}}\|_{H^1(\Omega)}\right)\\
&\qquad-\frac{\epsilon_{\min}}{2}\|\nabla u_{\overline{g}}\|_{H^1(\Omega)}^2-\|{\bm f}\|_{L^2(\Omega)}\|v\|_{H^1(\Omega)}\to +\infty \text{ whenever } \|v\|_{H^1(\Omega)}\to \infty,
\end{aligned}
\end{equation} 
where $\epsilon_{\min} = \inf_{x \in \Omega} \epsilon(x) > 0$. We have proved the existence of a minimizer $u_{\rm{min}}$ of $J$ over the set $H_{\overline g}^1(\Omega)$. Moreover, since $a(v,v)$ is a strictly convex functional it follows that $J$ is also strictly convex, and therefore this minimizer is unique.
\end{proof}

Now we show that the minimizer $u_{\rm{min}}$ is a solution to the weak formulation \eqref{general_form_of_the_weak_formulation_for_u_2_and_3_term_splitting_H1_spaces}, which is not immediate since $J$ is not Gateaux differentiable at any element of $H_{\overline g}^1(\Omega) \cap L^\infty(\Omega)$, see Section~\ref{Sections_remarks_on_J} below.

\begin{proposition} The unique minimizer $u_{\rm{min}}$ of $J$ over $H_{\overline g}^1(\Omega)$ satisfies \eqref{general_form_of_the_weak_formulation_for_u_2_and_3_term_splitting_H1_spaces} for $\Wb=C_c^\infty(\Omega)$ and ${\Wb=H_0^1(\Omega)\cap L^\infty(\Omega)}$.\end{proposition}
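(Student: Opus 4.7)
The plan is to derive the weak formulation as a first-order optimality condition for $J$, despite $J$ not being Gateaux differentiable. Fix $\varphi \in H_0^1(\Omega) \cap L^\infty(\Omega)$ and observe that $u_{\min} + t\varphi \in H_{\overline g}^1(\Omega)$ for every $t \in \mathbb{R}$. The boundedness of $\varphi$ yields the crucial exponential estimate
\[
B(x, u_{\min}+w+t\varphi) \;\leq\; \exp\!\big(|t|\,\|\varphi\|_{L^\infty}\,\max_j |\xi_j|\big)\, B(x, u_{\min}+w),
\]
and since $B(x, u_{\min}+w) \in L^1(\Omega)$ (as $u_{\min} \in \dom(J)$, which is nonempty because $J(0) < \infty$ in both splittings), the scalar function $\Phi(t) := J(u_{\min}+t\varphi)$ is finite for every $t \in \mathbb{R}$. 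Moreover $\Phi$ is convex as a sum of convex functions and is minimized at $t=0$, so its one-sided derivatives $\Phi'_\pm(0)$ exist in $\R\cup\{\pm\infty\}$ and satisfy $\Phi'_-(0) \leq 0 \leq \Phi'_+(0)$.

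The main task is to evaluate these one-sided derivatives explicitly. The quadratic and linear parts contribute the two-sided limits $a(u_{\min},\varphi)$ and $-\int_\Omega {\bm f}\cdot\nabla \varphi \dd x$, respectively. For the $B$-term, set $y := u_{\min}+w$. Convexity of $B(x,\cdot)$ ensures that the difference quotient $t \mapsto \frac{B(x, y+t\varphi)-B(x,y)}{t}$ is monotone nondecreasing and converges pointwise to $b(x,y)\varphi$ as $t \to 0$. The same monotonicity yields, for $0 < |t| \leq 1$, the two-sided bound
\[
B(x,y)-B(x,y-\varphi) \;\leq\; \frac{B(x, y+t\varphi)-B(x,y)}{t} \;\leq\; B(x, y+\varphi)-B(x,y),
\]
and both bounds lie in $L^1(\Omega)$ by the exponential estimate above. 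The dominated convergence theorem then yields simultaneously that $b(x,y)\varphi \in L^1(\Omega)$ and
\[
\lim_{t \to 0} \int_\Omega \frac{B(x, y+t\varphi)-B(x,y)}{t} \dd x \;=\; \int_\Omega b(x,y)\varphi \dd x.
\]

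Combining the three contributions, both one-sided derivatives of $\Phi$ at $0$ coincide and equal $a(u_{\min},\varphi) + \int_\Omega b(x, u_{\min}+w)\varphi \dd x - \int_\Omega {\bm f}\cdot \nabla \varphi \dd x$. The sandwich $\Phi'_-(0) \leq 0 \leq \Phi'_+(0)$ forces this quantity to vanish, establishing \eqref{general_form_of_the_weak_formulation_for_u_2_and_3_term_splitting_H1_spaces} for $\Wb = H_0^1(\Omega) \cap L^\infty(\Omega)$; the case $\Wb = C_c^\infty(\Omega)$ follows immediately by restriction. The main obstacle is precisely to bypass the non-differentiability of $J$: the $L^\infty$ bound on $\varphi$ is indispensable both for keeping $u_{\min}+t\varphi$ inside $\dom(J)$ and for supplying the integrable envelopes legitimising the passage to the limit, explaining why this argument cannot be directly extended to arbitrary $\varphi \in H_0^1(\Omega)$ without the additional a priori $L^\infty$ estimate for $u_{\min}$ developed later in Section~\ref{Section_A_priori_Linfty_estimate}.
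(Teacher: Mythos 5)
Your proof is correct and follows essentially the same route as the paper: perturb the minimizer along a bounded direction, handle the quadratic and linear terms directly, and pass to the limit in the difference quotient of the $B$-term via dominated convergence with an integrable majorant supplied by the $L^\infty$ bound on the test function. The only cosmetic differences are that you work with signed perturbations $t\in\mathbb R$ and obtain the $L^1$ envelope from the monotonicity of convex difference quotients, whereas the paper uses one-sided perturbations $\lambda>0$ together with the mean value theorem (and then implicitly replaces $v$ by $-v$ to upgrade the resulting inequality to an equality).
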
 

\begin{proof}We will use the Lebesgue dominated convergence theorem and the fact that at $u_{\rm{min}}$ it holds that $B(x,u_{\rm{min}}+w)\in L^1(\Omega)$. We have that $J(u_{\rm{min}}+\lambda v)-J(u_{\rm{min}})\ge 0$ for all $v\in H_0^1(\Omega)$ and all $\lambda \ge 0$, i.e.,
\begin{equation*}
\begin{aligned}
&\frac{1}{2}a\left(u_{\rm{min}}+\lambda v,u_{\rm{min}}+\lambda v\right)+\int_{\Omega}{B\left(x,u_{\rm{min}}+\lambda v+w\right)\dd x}-\int_{\Omega}{\bm f\cdot \nabla (u_{\rm{min}}+\lambda v)\dd x}\\
&\quad-\frac{1}{2}a\left(u_{\rm{min}},u_{\rm{min}}\right)-\int_{\Omega}{B\left(x,u_{\rm{min}}+w\right)\dd x}+\int_{\Omega}{\bm f\cdot \nabla u_{\rm{min}} \dd x}\ge 0,
\end{aligned}
\end{equation*}
which, by using the symmetry of $a(\cdot,\cdot)$, is equivalent to 
\begin{equation}\label{varying_J_01}
\begin{aligned}
\lambda a\left(u_{\rm{min}},v\right)+\frac{\lambda}{2}a(v,v)+\int_{\Omega}{\left(B\left(x,u_{\rm{min}}+\lambda v+w\right)-B\left(x,u_{\rm{min}}+w\right)\right) \dd x}-\lambda \int_{\Omega}{\bm f\cdot\nabla v \dd x}\ge 0.
\end{aligned}
\end{equation} 
Dividing both sides of the above inequality by $\lambda>0$ and letting $\lambda\to 0^+$ we obtain
\begin{equation} \label{varying_J_02}
a(u_{\rm{min}},v)+\lim_{\lambda \to 0^+}{\frac{1}{\lambda}\int_{\Omega}{B(x,u_{\rm{min}}+\lambda v+w)-B(x,u_{\rm{min}}+w) \dd x}}-\int_{\Omega}{\bm f\cdot\nabla v \dd x}\ge 0.
\end{equation} 
To compute the limit in the second term of \eqref{varying_J_02}, we will apply the Lebesgue dominated convergence theorem. We have
\begin{equation} \label{pointwise_convergence_of_the_variation_of_the_nonlinear_term}
\begin{aligned}
f_\lambda(x)&:=\frac{1}{\lambda}\Big(B\big(x,u_{\rm{min}}(x)+w(x)+\lambda v(x)\big)-B\big(x,u_{\rm{min}}(x)+w(x)\big)\Big)\\
&\xrightarrow{\lambda\to 0^+}b\big(x,u_{\rm{min}}(x)+w(x)\big)v(x)\quad\text{for a.e.}\quad x\in\Omega
\end{aligned}
\end{equation} 
By the mean value theorem we have
\begin{equation*}
f_\lambda(x)=b\big(x,u_{\rm{min}}+w(x)+\Xi(x)\lambda v(x)\big)\,v(x),\ \text{where}\ \Xi(x)\in (0,1)\ \mathrm{for}\ \mathrm{all}\ x\in\Omega
\end{equation*}
and hence, if $v\in L^\infty(\Omega)$, we can obtain the following bound on $f_\lambda$ whenever $\lambda \le 1$:
\begin{equation}\label{Summable_majorant_for_the_nonlinearity_in_General_PBE}
\begin{aligned}
\abs{f_\lambda(x)}&=\Bigg|-\frac{4\pi e_0^2}{k_BT}v(x)\sum_{j=1}^{N_{ions}}{\overline{M}_j(x)\xi_j\me^{-\xi_j\big(u_{\rm{min}}(x)+w(x)+\Xi(x)\lambda v(x)\big)}}\Bigg|\\
&\leq \max_{j}{\abs{\xi_j}}\|v\|_{L^\infty(\Omega)}\frac{4\pi e_0^2}{k_BT}\sum_{j=1}^{N_{ions}}{\overline M_j(x)\me^{-\xi_j\big(u_{\rm{min}}(x)+w(x)\big)-\xi_j\Xi(x)\lambda v(x)}}\\
&\leq \max_{j}{\abs{\xi_j}}\max_{j}{\me^{\abs{\xi_j} \|v\|_{L^\infty(\Omega)}} }\|v\|_{L^\infty(\Omega)}\frac{4\pi e_0^2}{k_BT}\sum_{j=1}^{N_{ions}}{\overline M_j(x)\me^{-\xi_j\big(u_{\rm{min}}(x)+w(x)\big)}}\\
&=\max_{j}{\abs{\xi_j}}\max_{j}{\me^{\abs{\xi_j} \|v\|_{L^\infty(\Omega)}} }\|v\|_{L^\infty(\Omega)}B\big(x,u_{\rm{min}}(x)+w(x)\big)\in L^1(\Omega).
\end{aligned}
\end{equation} 
From the Lebesgue dominated convergence theorem, by using \eqref{pointwise_convergence_of_the_variation_of_the_nonlinear_term} and \eqref{Summable_majorant_for_the_nonlinearity_in_General_PBE}, it follows that the limit in \eqref{varying_J_02} is equal to $\int_{\Omega}{b(x,u_{\rm{min}}+w)v \dd x}$, and therefore we obtain
\begin{equation} \label{Final_ineuqality_after_varying_the_functional_J}
a(u_{\rm{min}},v)+\int_{\Omega}{b(x,u_{\rm{min}}+w)v \dd x}-\int_{\Omega}{\bm f\cdot\nabla v \dd x}\ge 0 \,  \text{ for all }\, v\in H_0^1(\Omega)\cap L^\infty(\Omega).
\end{equation} 
This means that $u=u_{\rm{min}}$ is the unique solution to the weak formulation \eqref{general_form_of_the_weak_formulation_for_u_2_and_3_term_splitting_H1_spaces} for $\Wb=H_0^1(\Omega)\cap L^\infty(\Omega)$ and $\Wb=C_c^\infty(\Omega)$.
\end{proof}

In fact $u=u_{\rm{min}}$ is also a solution to \eqref{general_form_of_the_weak_formulation_for_u_2_and_3_term_splitting_H1_spaces} with $\Wb=H_0^1(\Omega)$, which we prove in Section \ref{Section_A_priori_Linfty_estimate} with the help of the a priori $L^\infty$ bound obtained there.% on the solution of \eqref{general_form_of_the_weak_formulation_for_u_2_and_3_term_splitting_H1_spaces} with $\Wb=H_0^1(\Omega)\cap L^\infty(\Omega)$ and $\Wb=C_c^\infty(\Omega)$.

\subsubsection{Some remarks on the functional \texorpdfstring{$J$}{J}}\label{Sections_remarks_on_J}
It is worth noting that for $\overline g=0$, the domain $\dom(J)$ of $J$ as defined in \eqref{definition_of_J_general_form_General_PBE} is a linear subspace of $H_0^1(\Omega)$ for $d\leq 2$ and not a linear subspace of $H_0^1(\Omega)$ if $d\ge 3$. In dimension $d\leq 2$, from the Moser-Trudinger inequality \cite{Trudinger_1967,Best_constants_in_some_exponential_Sobolev_inequalities} we know that $\me^v\in L^2(\Omega)$ for any $v\in H_0^1(\Omega)$ and thus $\me^{\lambda v_1+\mu v_2}\in L^2(\Omega)$ for any $\lambda,\mu\in\mathbb R$ and any $v_1,v_2\in H_0^1(\Omega)$.

On the other hand,  if $d\ge 3$, first observe that 
\[\dom(J)=\big\{v\in H_0^1(\Omega)\,:\, B(x,v+w)\in L^1(\Omega)\big\}.\]
For simplicity we consider the case of the PBE, i.e., $B(x,v+w)=\overline{k}^2\cosh(v+w)$. Let us consider an example situation in which $B(0,r) \subset \Omega_{ions} \subset \Omega = B(0,1)$, where $B(0,r)$ denotes the ball in $\mathbb R^d,\,d\ge 3$ with radius $r$ and a center~at~0.
We consider the function $v=\ln{\frac{1}{\abs{x}}}\in H_0^1(B(0,1))$. Since $\me^v=\frac{1}{\abs{x}}\in L^1(\Omega_{ions})$ and $\me^{\lambda v}=\frac{1}{\abs{x}^\lambda}\notin L^1(\Omega_{ions})$ for any $\lambda\ge d$, we obtain
\begin{equation*}
\begin{aligned}
\int_{\Omega}{\overline{k}^2\cosh(v+w)\dd x}&= \int_{\Omega_{ions}}{\overline{k}^2_{ions}\frac{\left(\me^{v+w}+\me^{-v-w}\right)}{2} \dd x}\\
&\leq\frac{1}{2}\overline{k}_{ions}^2\me^{\|w\|_{L^\infty(\Omega_{ions})}}\int_{\Omega_{ions}}{\left(\me^v+\me^{-v}\right)\dd x}\\
&\leq \frac{1}{2}\overline{k}_{ions}^2\me^{\|w\|_{L^\infty(\Omega_{ions})}}\Big(\int_{\Omega_{ions}}{\me^v\dd x}+\abs{\Omega_{ions}}\Big)<+\infty
\end{aligned}
\end{equation*}
but for any $\lambda > d$ we have
\begin{equation*}
\int_{\Omega}{\overline{k}^2\cosh(\lambda v+w)\dd x}\ge\frac{1}{2}\int_{\Omega_{ions}}{\overline{k}^2_{ions}\me^{\lambda v+w}\dd x}\ge \frac{1}{2}\overline{k}^2_{ions}\me^{-\|w\|_{L^\infty(\Omega_{ions})}}\int_{\Omega_{ions}}{\me^{\lambda v}\dd x}=+\infty.
\end{equation*}
This means that $v\in \dom(J)$, but $\lambda v\notin \dom(J)$ for any $\lambda\ge d$. Therefore $\dom(J)$ is not a linear space. However, $\dom(J)\subset H_0^1(\Omega)$ is a convex set. To see this, let $v_1,v_2\in \dom(J)$, i.e., $B(x,v_1+w),\,B(x,v_2+w)\in L^1(\Omega)$. Since $B(x,\cdot)$ is convex it follows that for almost every $x\in\Omega$ and every $\lambda\in [0,1]$ we have
\[B(x,\lambda v_1(x)+(1-\lambda)v_2(x)+w(x))\leq \lambda B(x,v_1(x)+w(x))+(1-\lambda)B(x,v_2(x)+w(x)).\]
By integrating the above inequality over $\Omega$, since both terms of the right hand side are finite, we get $\lambda v_1+(1-\lambda) v_2\in \dom(J)$ for all $\lambda\in[0,1]$. 

Analogously, in dimension $d=3$ the functional $\int_{\Omega}{B(x,v+w)\dd x}$ is not Gateaux differentiable at any $u\in H_0^1(\Omega)\cap L^\infty(\Omega)$. In fact $\int_{\Omega}{B(x,v+w)\dd x}$ is discontinuous at every $u\in H_0^1(\Omega)\cap L^\infty(\Omega)$. To see this, consider any $\Omega_{ions}$ and $\Omega$ such that $B(0,r) \subset \Omega_{ions} \subset \Omega$, and again for simplicity $B(x,v+w)=\overline{k}^2\cosh(v+w)$. We define $z=\psi \abs{x}^{-1/3}$, where $\psi \in C_c^\infty(\Omega)$ is equal to 1 in $B(0,r)$. Then $z\in H_0^1(\Omega)$, but $\me^{\lambda z}\notin L^1(\Omega_{ions})$ for any $\lambda >0$. To see this, notice that for any fixed $\lambda >0$, in a neighborhood of the origin (of size depending on $\lambda$) we have $\me^{\lambda z}> \abs{x}^{-3}$. In this case, for any $u\in H_0^1(\Omega)\cap L^\infty(\Omega)$ and any $\lambda >0$ we have 
\begin{equation*}
\begin{aligned}
\int_{\Omega}{\overline{k}^2\cosh(u+\lambda z+w) \dd x}&\ge \frac{1}{2}\int_{\Omega_{ions}}{\overline{k}_{ions}^2\me^{u+\lambda z+w}\dd x}\\&\ge \frac{\overline{k}_{ions}^2\me^{-\|u+w\|_{L^\infty(\Omega_{ions})}}}{2}\int_{\Omega_{ions}}{\me^{\lambda z}\dd x}=+\infty.
\end{aligned}
\end{equation*}

\subsubsection{A priori \texorpdfstring{$L^\infty$}{Linfty} estimate for the solution of \eqref{general_form_of_the_weak_formulation_for_u_2_and_3_term_splitting_H1_spaces} and existence with test space $H^1_0$}\label{Section_A_priori_Linfty_estimate}
In this section we prove a boundedness result for semilinear problems resembling \eqref{general_form_of_the_weak_formulation_for_u_2_and_3_term_splitting_H1_spaces} but under slightly more general assumptions on the nonlinearity, which is not necessarily assumed to be monotone. While such a boundedness result is not necessary to obtain existence of a solution $u$ to \eqref{general_form_of_the_weak_formulation_for_u_2_and_3_term_splitting_W1p_spaces}, it is important for the PBE for two reasons. The first is that our uniqueness analysis for \eqref{weak_formulation_General_PBE_W1p_spaces} requires $x \mapsto b(x,\phi(x))\in L^1(\Omega)$, which holds for $u \in L^\infty(\Omega)$. The second is that the 2-term and 3-term splittings are often used numerically in practice, where having standard $H^1$ formulations is advantageous. 

Results on a priori $L^\infty$ estimates for linear elliptic equations of second order appear for example in \cite{Stampacchia_1965, Kinderlehrer_Stampacchia} and for nonlinear elliptic equations  in \cite{Boccardo_Murat_Puel_1992, Boccardo_Segura_de_Leon_Trombetti_2001, Trombetti_2003, Boccardo_Brezis_2003, Boccardo_Dall_Aglio_Orsina_1998}. Vital techniques in the analysis of these papers are different adaptations of the $L^\infty$ regularity procedure introduced by Stampacchia; these make use of families of `nonlinear' test functions $G_k(u)$ derived from the solution $u$. We can write \eqref{general_form_of_the_weak_formulation_for_u_2_and_3_term_splitting_H1_spaces} in the generic semilinear form
\begin{equation} \label{generic_semilinear_problem}
A(u)+H(x,u,\nabla u)=0\quad \text {in } \Omega,\,u=\overline{g}\quad\text{on }\partial\Omega.
\end{equation} 
Now, to obtain information from such a testing procedure on \eqref{generic_semilinear_problem} one typically requires either bounds of the form $H(x,t,\xi)\leq C(x) + h(|t|)|\xi|^p$ with well-behaved $h$ that ensure the effect of the nonlinearity can be dominated by the second order term (see \cite{Boccardo_Murat_Puel_1992} for the case $h, C$ constant and $A$ a Leray-Lions differential operator, or \cite{Boccardo_Segura_de_Leon_Trombetti_2001} with $h \in L^1(\mathbb R)$ and $C=0$), a sign condition of the form $H(x,t,\xi)t \geq 0$ (see \cite{Brezis_Strauss_1973} for some cases without gradient terms), or both \cite{Bensoussan_Boccardo_Murat_1988}. Most works in the literature seem to be centered on these assumptions, but in our situation neither is available: the nonlinearity $b(x,\cdot)$ has exponential growth, and when the ionic solution is not charge neutral it also does not follow the sign of its second argument. However the full strength of the sign condition is rarely needed, for example in \cite[Sec.~3]{Brezis_Strauss_1973} it is introduced as a simplification of more detailed conditions involving the actual test functions to be used. Starting by the observation that the sign condition is clearly satisfied if $b(x,t)$ is nondecreasing in $t$ and $b(x,w)=0$, we relax this by bounds of the type $c_1(x,t)\leq b(x,t)\leq c_2(x,t)$ with $c_1, c_2$ nondecreasing in their second argument and with adequate integrability on the functions $c_1(x,w)$, $c_2(x,w)$. These conditions are applicable to the general form of $b$ in the PBE \eqref{GPBE_dimensionless} and ensure that the effect of the nonlinear term when testing with $G_k(u)$ is bounded below by a fixed $L^1$ function, which is just enough to finish the proof. Since the dependence on $\nabla u$ in \eqref{general_form_of_the_weak_formulation_for_u_2_and_3_term_splitting_H1_spaces} consists of a linear term with high summability, it can be taken care of in an ad-hoc manner and does not pose major problems.

In the result presented below, we assume a linear operator $\bm A$ and a nonlinearity $b(x,t)$ which does not depend on the gradient of the solution and which is not assumed to be nondecreasing in the second argument. We allow for a linear gradient term and a nonhomogeneous Dirichlet boundary condition on $\partial\Omega$ given by $g$, covering the case of \eqref{general_form_of_the_weak_formulation_for_u_2_and_3_term_splitting_H1_spaces}. With the assumptions we make on $b$, we prove that every weak solution $u\in H_{g}^1(\Omega)$ must be in $L^\infty(\Omega)$ with  $\|u\|_{L^\infty(\Omega)}\leq \gamma$ where $\gamma$ depends only on the data of the problem. As in \cite{Boccardo_Murat_Puel_1992}, our $L^\infty$ result seems to be optimal in the sense that when $b(x,\cdot)$ is a linear term, $u\in L^\infty(\Omega)$ for $s>d$, $r>\frac{d}{2}$ which coincides with the classical (optimal) results of Stampacchia, De Giorgi, and Moser in the linear case (see, e.g. the references in \cite{Boccardo_Murat_Puel_1992}). 

In \cite{Boccardo_Segura_de_Leon_Trombetti_2001, Trombetti_2003}, the authors prove $L^\infty$ estimates on the solution of very general nonlinear elliptic equations but with a nonlinear zeroth order term with a growth condition which seems not to cover the case of exponential nonlinearities with respect to $u$, as it is the case of the general PBE, and homogeneous Dirichlet boundary conditions. In \cite{Boccardo_Dall_Aglio_Orsina_1998, Boccardo_Brezis_2003}, $L^\infty$ estimates are proved for nonlinear elliptic equations with homogeneous Dirichlet boundary conditions and with degenerate coercivity but without a nonlinear zeroth order term. 

\begin{definition}[see, e.g. Definition 3.5 in \cite{Dacorogna}]\label{Definition_Caratheodory_function}
Let $\Omega\subset \mathbb R^n$ be an open set and let $f:\Omega\times \mathbb R\to\mathbb R\cup \{+\infty\}$. Then $f$ is said to be a Carath\'eodory function if 
\begin{itemize}
\item [(i)] $t\mapsto f(x,t)$ is continuous for almost every $x\in\Omega$,
\item [(ii)] $x\mapsto f(x,t)$ is measurable for every $t\in\mathbb R$.
\end{itemize}
\end{definition}
If $f$ is a Carath\'eodory function and $u:\Omega\to\mathbb R$ is measurable, then it follows that the function $g:\Omega\to \mathbb R\cup \{+\infty\}$ defined by $g(x)=f(x,u(x))$ is measurable (see, e.g., Proposition 3.7 in \cite{Dacorogna}).

\begin{theorem}[A priori $L^\infty$ estimate]\label{Thm_Boundedness_of_solution_to_general_semilinear_elliptic_problem}
Let $\Omega\subset\mathbb R^d,\,d\ge 2$ be a bounded Lipschitz domain and let  %(a domain for which the Sobolev embedding theorems hold and such that the trace operator is well defined)
$b(x,t):\Omega\times\mathbb R\to\mathbb R$ be a Carath\'eodory function, not necessarily nondecreasing in its second argument, such that 
\begin{equation} \label{bounds_on_b}
c_1(x,t)\leq b(x,t)\leq c_2(x,t)\quad\text{for a.e } x\in\Omega\ \mathrm{and}\ \mathrm{all}\  t\in\mathbb R,
\end{equation} 
where $c_1, c_2:\Omega\times \mathbb R\to \mathbb R$ are Carath\'eodory functions which are nondecreasing in the second argument for a.e $x\in\Omega$. Let $a(u,v)=\int_{\Omega}{\bm A\nabla u\cdot\nabla v \dd x}$, where ${\bm A=(a_{ij})}$, $a_{ij}(x)\in L^\infty(\Omega)$, and ${\bm A}$ satisfies the uniform ellipticity condition \eqref{uniform_ellipticity_of_A} 
for some positive constant $\underline \alpha$. Finally, let 
\begin{equation} \label{L_infty_estimate_more_general_nonlinear_problem}
\begin{aligned}
&u\in H_0^1(\Omega)\text{ be such that }b(x,u+\omega)v \in L^1(\Omega)\,\text{ for all } v\in \Wb \text{ and } \\
&a(u,v)+\int_{\Omega}{b(x,u+\omega)v \dd x}=\int_{\Omega}{\left(f_0v+\bm f\cdot\nabla v \right)\dd x}\,\text{ for all } v\in \Wb,
\end{aligned}
\end{equation} 
where $\bm f=(f_1,\ldots,f_d)$ and the test space $\Wb$ can be either $C_c^\infty(\Omega)$, $H_0^1(\Omega)\cap L^\infty(\Omega)$, or~$H_0^1(\Omega)$.
Provided that $\omega \in L^\infty(\Omega)$, $\bm f \in \left[L^s(\Omega)\right]^d$ with $s>d$ and $f_0$, $c_1(x,\omega)$, $c_2(x,\omega)\in L^r(\Omega)$ with $r>d/2$, then $\|u\|_{L^\infty(\Omega)}\leq\gamma$ where $\gamma$ depends only on the data, i.e, $\underline \alpha$, $|\Omega|$, $\|a_{ij}\|_{L^\infty(\Omega)}$, $\|\omega\|_{L^\infty(\Omega)}$, $\|\bm f\|_{L^s(\Omega)}$, $\|f_0\|_{L^r(\Omega)}$, $\|c_1(x,\omega)\|_{L^r(\Omega)}$, $\|c_2(x,\omega)\|_{L^r(\Omega)}$.
\end{theorem}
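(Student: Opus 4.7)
The plan is to adapt Stampacchia's truncation method, the principal difficulty being that neither monotonicity of $b$ nor the classical sign condition $b(x,t)t \ge 0$ are available here. Both will be replaced by the bracket~\eqref{bounds_on_b} together with monotonicity of the envelopes $c_1, c_2$.

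First I would establish the substitute for the sign condition. For $k \ge 0$, on the super-level set $A(k) := \{u > k\}$ one has $u+\omega \ge \omega$ a.e., so monotonicity of $c_1(x,\cdot)$ gives
\begin{equation*}
b(x,u+\omega) \ge c_1(x,u+\omega) \ge c_1(x,\omega) \in L^r(\Omega).
\end{equation*}
Symmetrically, on $\{u < -k\}$, monotonicity of $c_2$ yields $b(x,u+\omega) \le c_2(x,\omega) \in L^r(\Omega)$. This is the minimum replacement for a sign condition needed to run Stampacchia's iteration. Next I would test with $G_k(u) := (u-k)^+ \in H_0^1(\Omega)\cap L^\infty(\Omega)$. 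For $\Wb \in \{H_0^1(\Omega)\cap L^\infty(\Omega),\, H_0^1(\Omega)\}$ admissibility is immediate, while for $\Wb = C_c^\infty(\Omega)$ the weak formulation identifies $T_b := b(x,u+\omega)$ as an element of $H^{-1}(\Omega) \cap L^1_{loc}(\Omega)$; the previous step provides the one-sided $L^1$ bound $T_b \, G_k(u) \ge c_1(x,\omega)\,G_k(u) \in L^1(\Omega)$ required by Brezis--Browder (Theorem~\ref{Thm_Brezis_Browder_for_the_extension_of_bdd_linear_functional_1978}) to extend the identity to $v = G_k(u)$.

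Using ellipticity of $\bm A$ and the lower bound on $b$ I obtain
\begin{equation*}
\underline\alpha\,\|\nabla G_k(u)\|_{L^2(\Omega)}^2 \le \int_\Omega \big(f_0 - c_1(x,\omega)\big) G_k(u) \dd x + \int_\Omega \bm f \cdot \nabla G_k(u) \dd x.
\end{equation*}
With $\mu(k) := |A(k)|$, applying H\"older on the right together with the Sobolev embedding $H_0^1 \hookrightarrow L^{2^*}$, then absorbing $\|\nabla G_k(u)\|_{L^2}$ on the left, gives
\begin{equation*}
\|\nabla G_k(u)\|_{L^2(\Omega)} \le C\Big(\mu(k)^{1-1/r-1/2^*} + \mu(k)^{1/2-1/s}\Big),
\end{equation*}
with $C$ depending only on the data listed in the statement. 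Combining the Chebyshev-type bound $(h-k)^{2^*}\mu(h) \le \|G_k(u)\|_{L^{2^*}}^{2^*}$ with Sobolev once more yields the iterative inequality
\begin{equation*}
\mu(h) \le \frac{K}{(h-k)^{2^*}}\Big(\mu(k)^{\alpha_1} + \mu(k)^{\alpha_2}\Big), \qquad h > k \ge 0,
\end{equation*}
and the hypotheses $r > d/2$ and $s > d$ are precisely what force $\alpha_1, \alpha_2 > 1$. The classical Stampacchia iteration lemma then provides some $k_\infty$, depending only on the listed data, with $\mu(k_\infty) = 0$, so $u \le k_\infty$ a.e.

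The lower bound $u \ge -k'_\infty$ follows from the mirror argument using the test function $\min(u+k,0)$ and the upper envelope $c_2(x,\omega)$ in place of $c_1(x,\omega)$; combining gives $\|u\|_{L^\infty(\Omega)} \le \gamma$. I expect the main obstacle to lie in the very first step: since $b$ is assumed neither monotone nor sign-preserving, the standard route of discarding the nonlinear term after testing with $G_k(u)$ is unavailable. The bracket~\eqref{bounds_on_b} is just strong enough to replace $b(x,u+\omega)$ by $c_1(x,\omega)$ or $c_2(x,\omega)$ under the integral, effectively transferring the nonlinearity into an $L^r$ datum, which is why the integrability of these composite functions appears both in the hypotheses and in the final constant $\gamma$.
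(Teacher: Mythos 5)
Your argument matches the paper's proof essentially step for step: the envelopes $c_1(\cdot,\omega)$, $c_2(\cdot,\omega)$ substitute for the missing sign condition, Brezis--Browder (Theorem~\ref{Thm_Brezis_Browder_for_the_extension_of_bdd_linear_functional_1978}) justifies testing with truncations lying outside the given test space, and Stampacchia's iteration with $r>d/2$, $s>d$ yields the extinction via Lemma~\ref{Lemma_B1_Kinderlehrer_Stampacchia}. The paper uses the two-sided truncation $G_k(u)=\text{sign}(u)(\abs{u}-k)^+$ in a single pass instead of your two one-sided truncations, but this is cosmetic.

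Two small points. First, since $u$ is not known to be bounded a priori, $(u-k)^+$ need not lie in $L^\infty(\Omega)$, so admissibility is not ``immediate'' for $\Wb=H_0^1(\Omega)\cap L^\infty(\Omega)$: that case also requires the Brezis--Browder extension you already invoke for $C_c^\infty(\Omega)$ (the paper handles both smaller test spaces uniformly this way). Second, Lemma~\ref{Lemma_B1_Kinderlehrer_Stampacchia} requires a single power, so the two terms $\mu(k)^{\alpha_1}+\mu(k)^{\alpha_2}$ must be merged; the paper does this by first deriving a data-dependent $k_0$ with $\abs{A(k_0)}\leq 1$ from the $k=0$ estimate, whereas one can equivalently absorb a factor $\abs{\Omega}^{\abs{\alpha_1-\alpha_2}}$ into your constant $K$. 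Neither point affects the validity of the strategy.
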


\begin{remark}
We point out that in Theorem~\ref{Thm_Boundedness_of_solution_to_general_semilinear_elliptic_problem} the bilinear form $a(\cdot,\cdot)$, the nonlinearity $b(x,\omega)$ and the functions $f_0$ and $\bm f$ are more general than those for the GPBE in \eqref{general_form_of_the_weak_formulation_for_u_2_and_3_term_splitting_H1_spaces}. Moreover, we use the notation $\omega$ to distinguish it from $w$ appearing in \eqref{general_form_of_the_weak_formulation_for_u_2_and_3_term_splitting_H1_spaces}, since they do not refer to the same object. Namely, below we apply Theorem~\ref{Thm_Boundedness_of_solution_to_general_semilinear_elliptic_problem} to the particular problem \eqref{general_form_of_the_weak_formulation_for_u_2_and_3_term_splitting_H1_spaces} with $\omega=\chi_{\Omega_{ions}} w= \chi_{\Omega_{ions}} G$ in the case of the 2-term splitting and with $\omega=0$ in the case of the 3-term splitting. 
\end{remark}

\begin{remark}
Since $c_1(x,\cdot)$ and $c_2(x,\cdot)$ are nondecreasing it follows that
\begin{equation}
c_i\left(x,-\|\omega\|_{L^\infty(\Omega)}\right)\leq c_i(x,\omega)\leq c_i\left(x,\|\omega\|_{L^\infty(\Omega)}\right)\,\text{ for }\, i=1,2.
\end{equation}
Then the condition that $c_1(x,\omega),\,c_2(x,\omega)\in L^r(\Omega)$ where $r>d/2$ can be achieved if $c_1(x,t)$ and $c_2(x,t)$ define functions in $L^r(\Omega)$ for every $t\in\mathbb R$. For example, this condition will be fulfilled if $c_1(x,t)=k_1(x)a_1(t)$ and $c_2(x,t)=k_2(x)a_2(t)$ where $k_1,k_2\ge 0$, $k_1,k_2\in L^r(\Omega)$ and $a_1,a_2:\mathbb R\to\mathbb R$ are nondecreasing and continuous functions.

If $b(x,\cdot)$ is nondecreasing for almost every $x\in\Omega$ and if $b(x,\omega)\in L^r(\Omega)$ with $r>d/2$, then $c_1(x,\cdot)$ and $c_2(x,\cdot)$ can be taken equal to $b(x,\cdot)$. Notice also that there is neither sign condition nor growth condition on the nonlinearity $b(x,\cdot)$.
\end{remark}

\begin{remark}[Nonhomogeneous Dirichlet boundary condition]\label{nonhomogeneous_Dirichlet_BC}
Let $g$ be in the trace space $W^{1-1/s,s}(\partial\Omega)$ for some $s>d$ (in particular this is true if $g\in C^{0,1}(\partial\Omega)$, see Lemma~\ref{Lemma_extension_pf_Lipschitz}) and let $u_g\in W^{1,s}(\Omega)\subset L^\infty(\Omega)$ be such that  $\gamma_s(u_g)=g =\gamma_2(u_g)$. Suppose that $u$ satisfies
\begin{equation} \label{L_infty_estimate_more_general_nonlinear_problem_nonhomogeneous_Dirichlet_BC}
\begin{aligned}
&u\in H_g^1(\Omega)\text{ such that }b(x,u+\omega)v \in L^1(\Omega)\,\text{ for all } v\in \Wb \text{ and } \\
&a(u,v)+\int_{\Omega}{b(x,u+\omega)v \dd x}=\int_{\Omega}{\left(f_0v+\bm f\cdot\nabla v \right)\dd x}\,\text{ for all } v\in \Wb.
\end{aligned}
\end{equation} 
Then we can apply Theorem~\ref{Thm_Boundedness_of_solution_to_general_semilinear_elliptic_problem} to the homogenized version of problem \eqref{L_infty_estimate_more_general_nonlinear_problem_nonhomogeneous_Dirichlet_BC}, that is
\begin{equation} \label{L_infty_estimate_more_general_nonlinear_problem_nonhomogeneous_Dirichlet_BC_homogenized}
\begin{aligned}
&\text{Find }u_0\in H_0^1(\Omega)\text{ such that }b(x,u_0+u_g+\omega)v \in L^1(\Omega)\,\text{ for all } v\in \Wb \text{ and } \\
&a(u_0,v)+\int_{\Omega}{b(x,u_0+u_g+\omega)v \dd x}=\int_{\Omega}{\left(f_0v+\left(\bm f-\bm A\nabla u_g\right)\cdot\nabla v \right)\dd x}\,\text{ for all } v\in \Wb
\end{aligned}
\end{equation} 
with $\omega\rightarrow u_g+\omega\in L^\infty(\Omega)$, $\bm f\rightarrow \bm f-\bm A\nabla u_g\in \left[L^s(\Omega)\right]^d$.
\end{remark}

\begin{remark}\label{additional_regularity_of_general_semilinear_problem}
From Theorem~\ref{Thm_Boundedness_of_solution_to_general_semilinear_elliptic_problem} and Remark~\ref{nonhomogeneous_Dirichlet_BC} it follows that the nonlinearity evaluated at the solution $u$ of \eqref{L_infty_estimate_more_general_nonlinear_problem_nonhomogeneous_Dirichlet_BC} (if it exists), is in $L^\infty(\Omega)$, i.e., $b(x,u_0+u_g+\omega)\in L^\infty(\Omega)$. Therefore, if a solution exists, then the classical regularity results of De Giorgi-Nash-Moser (see, e.g., Theorem 2.12 in \cite{Bei_Hu_Blow_up_theories_for_semilinear_parabolic_equations_2011}, p. 65, Theorem 3.5 in \cite{YZ_Chen_LC_Wu_Second_order_elliptic_equations_and_systems_1998}) for linear elliptic equations  can be applied to the unique solution~$z_0$ (by the Lax-Milgram Theorem) of the linear equation arising from \eqref{L_infty_estimate_more_general_nonlinear_problem_nonhomogeneous_Dirichlet_BC_homogenized}
\begin{equation} \label{solution_of_nonlinear_problem_satisfies_a_linear_one}
a(z_0,v)=\int_{\Omega}{\left[\left(-b(x,u_0+u_g+\omega)+f_0\right)v+\left(\bm f-\bm A\nabla u_g\right)\cdot\nabla v \right]\dd x}\, \text{ for all }\, v\in H_0^1(\Omega)
\end{equation} 
and conclude that $z_0\equiv u_0$ is H\"older continuous and so is $u=u_g+u_0$, since $u_g\in W^{1,s}(\Omega)\subset C^{0,\lambda}(\overline\Omega)$ for $0<\lambda\leq 1-d/s$.

In addition, if we assume that $\bm A$ satisfies the assumptions in Theorem~\ref{Theorem_uniqueness_of_linear_elliptic_problems_with_diffusion_and_measure_rhs} with $\Omega_m$, $\Gamma=\partial\Omega_m$, and $\Omega$ as defined there, then we can apply Theorem~\ref{Thm_Optimal_Regularity_for_Elliptic_Interface_Problems} and obtain a $p>3$ such that 
$-\nabla\cdot \bm A \nabla$ is a topological isomorphism between $W_0^{1,q}(\Omega)$ and $W^{-1,q}(\Omega)$ for all $q\in (d,p)$ (since $d\in\{2,3\}$). By the assumptions on $f_0$ and $\bm f$ and taking into account the regularity of $u_g$, the right-hand side of \eqref{solution_of_nonlinear_problem_satisfies_a_linear_one} belongs to $W^{-1,q}(\Omega)$ for some $q>d$ depending on $s$ and $r$. We conclude that $z_0\in W_0^{1,\bar q}(\Omega)$ for some $\bar q>d$ (which also implies H\"older continuity of $z_0$ and consequently of $u$).
\end{remark}

\begin{proof}[Proof of Theorem~\ref{Thm_Boundedness_of_solution_to_general_semilinear_elliptic_problem}]
The proof is based on techniques introduced by Stampacchia, see e.g., the proof of Theorem B.2 in \cite{Kinderlehrer_Stampacchia}. There the $L^\infty$ estimate is proved for a linear elliptic problem tested with the space  $V=H_0^1(\Omega)$. Similarly to \cite{Kinderlehrer_Stampacchia}, we construct the following test functions
\begin{equation} \label{the_functions_G_k}
u_k:=G_k(u)=\left\{
\begin{array}{lll}
u-k,\,& \text{ a.e on } \{u(x)>k\},\\
0,\,& \text{ a.e on } \{\abs{u(x)}\leq k\},\\
u+k,\,& \text{ a.e on } \{u(x)<-k\},
\end{array}
\right.
\end{equation} 
\normalsize
for any $k\ge 0$, where $u_0:= u$.  Since $G_k(t)=\text{sign}(t)(\abs{t}-k)^+$ is Lipschitz continuous with $G_k(0)=0$ and $u\in H_0^1(\Omega)$, by Stampacchia's theorem (e.g., see \cite{Kinderlehrer_Stampacchia,Gilbarg_Trudinger_book_2001}) it follows that $G_k(u)\in H_0^1(\Omega)\,\text{ for all }\, k\ge 0$. Moreover, the weak partial derivatives are given by
\begin{equation} \label{weak_partial_derivative_of_G_k}
\frac{\partial u_k}{\partial x_i}=\left\{
\begin{array}{lll}
\frac{\partial u}{\partial x_i},\,& \text{ a.e on } \{u(x)>k\},\\
0,\,& \text{ a.e on } \{\abs{u(x)}\leq k\},\\
\frac{\partial u}{\partial x_i},\,& \text{ a.e on } \{u(x)<-k\}.
\end{array}
\right.
\end{equation} 
We have the Sobolev Embedding $H^1(\Omega)\hookrightarrow L^q(\Omega)$ where $q<\infty$ for $d=2$ and $q=\frac{2d}{d-2}$ for $d>2$. With $q'$ we will denote the H{\"o}lder conjugate to $q$. Thus $q'=\frac{q}{q-1}>1$ for $d=2$, and $q'=\frac{2d}{d+2}$ for $d>2$. With $C_E$ we denote the embedding constant in the inequality $\|u\|_{L^q(\Omega)}\leq C_E\|u\|_{H^1(\Omega)}$, which depends only on the domain $\Omega$, $d$, and $q$.

\textbf{Testing with $u_k$:} By applying Theorem~\ref{Thm_Brezis_Browder_for_the_extension_of_bdd_linear_functional_1978}, we will show that we can test equation \eqref{L_infty_estimate_more_general_nonlinear_problem} with $u_k$ for any $k>0$, as well as with $u$, which is not obvious because $u_k$ and $u$ need not be in the test space $\Wb$. For this observe that 
\begin{equation} \label{the_extension_of_Tb_is_equal_to_the_RHS}
\int_{\Omega}{b(x,u+\omega)v \dd x}=-a(u,v)+\int_{\Omega}{\left(f_0v+\bm f\cdot\nabla v\right)\dd x}\,\text{ for all }\, v\in \Wb
\end{equation} 
and that the right-hand side of \eqref{the_extension_of_Tb_is_equal_to_the_RHS} defines a bounded linear functional over $H_0^1(\Omega)$:
\begin{equation} \label{boundedness_of_a}
\abs{a(u,v)}\leq \Bigg(\sum_{i,j=1}^{d}{\|a_{ij}\|_{L^\infty(\Omega)}}\Bigg)\|u\|_{H^1(\Omega)}\|v\|_{H^1(\Omega)}\,\text{ for all }\, v\in H_0^1(\Omega)
\end{equation} 
 and 
\begin{equation} \label{boundedness_of_right_hand_side}
\begin{aligned}
&\Big|\int_{\Omega}{\left(f_0v+\bm f\cdot \nabla v\right)\dd x}\Big|\leq \|f_0\|_{L^{q'}(\Omega)}\|v\|_{L^q(\Omega)}+\|\bm f\|_{L^2(\Omega)}\|\nabla v\|_{L^2(\Omega)}\\
\leq& C_E\|f_0\|_{L^{q'}(\Omega)}\|v\|_{H^1(\Omega)}+\|\bm f\|_{L^2(\Omega)}\|v\|_{H^1(\Omega)}\,\text{ for all }\, v\in H^1(\Omega).
\end{aligned}
\end{equation} 
From \eqref{the_extension_of_Tb_is_equal_to_the_RHS}, \eqref{boundedness_of_a}, and \eqref{boundedness_of_right_hand_side}, it is clear that the linear functional $T_b$ defined by the formula $\langle T_b,v\rangle=\int_{\Omega}{b(x,u+\omega)v \dd x}\,\text{ for all }\, v\in \Wb$ is bounded in the norm of $H^1(\Omega)$ over the dense subspace\footnote{$\Wb$ is a dense subspace of $H_0^1(\Omega)$ when $\Wb=H_0^1(\Omega)\cap L^\infty(\Omega)$ or $\Wb=C_c^\infty(\Omega)$.} $\Wb$ and therefore it can be uniquely extended by continuity to a functional $\overline T_b\in H^{-1}(\Omega)$ over the whole space $H_0^1(\Omega)$. Moreover, the fact that $\int_{\Omega}{b(x,u+\omega)v \dd x}$ is finite for all $v \in C_c^\infty(\Omega)$ implies that $b(x,u+\omega) \in L^1_{loc}(\Omega)$. Therefore, if we show that $b(x,u+\omega)u_k\ge f_k(x)$ for some function $f_k\in L^1(\Omega)$, taking into account Remark \ref{remark_on_the_theorem_of_Brezis_and_Browder_for_the_extension_of_bdd_linear_functional_1978} we may apply Theorem~\ref{Thm_Brezis_Browder_for_the_extension_of_bdd_linear_functional_1978} and conclude that $b(x,u+\omega)u_k\in L^1(\Omega)$ and that $\langle \overline T_b, u_k\rangle=\int_{\Omega}{b(x,u+\omega)u_k\dd x}$. Since by density the extension $\overline T_b$ is also equal to the right hand side of \eqref{the_extension_of_Tb_is_equal_to_the_RHS}, we will also have that
\begin{equation} \label{the_weak_formulation_tested_with_the_special_test_function}
\begin{aligned}
a(u,u_k)=&-\int_{\Omega}{b(x,u+\omega)u_k \dd x}+\int_{\Omega}{\left(f_0u_k +\bm f\cdot\nabla u_k\right) \dd x} \,\text{ for all }\, k\ge 0.
\end{aligned}
\end{equation} 
By using the definition \eqref{the_functions_G_k} of $u_k$ we can write
\begin{equation*}
b(x,u+\omega)u_k=
\left\{
\begin{array}{lll}
b(x,u+\omega)(u-k),\,& \text{ a.e on } \{u(x)>k\},\\
0,\,& \text{ a.e on } \{\abs{u(x)}\leq k\},\\
b(x,u+\omega)(u+k),\,& \text{ a.e on } \{u(x)<-k\}.
\end{array}
\right.
\end{equation*}
Therefore, on the set $\{u(x)>k\}$ we obtain the estimate
\begin{equation} \label{inequality_for_b_on_the_set_where_u_g_greater_than_k}
b(x,u+\omega)(u-k)\ge c_1(x,u+\omega)(u-k)\ge c_1(x,\omega)(u-k),
\end{equation} 
and on the set $\{u(x)<-k\}$ the estimate
\begin{equation} \label{inequality_for_b_on_the_set_where_u_g_less_than_k}
b(x,u+\omega)(u+k)\ge c_2(x,u+\omega)(u+k)\ge c_2(x,\omega)(u+k).
\end{equation} 
If we define the function $f_k(x)$ through the equality
\begin{equation} 
f_k(x):=
\left\{
\begin{array}{lll}
c_1(x,\omega(x))(u(x)-k),\,& \text{ a.e on } \{u(x)>k\},\\
0,\,& \text{ a.e on } \{\abs{u(x)}\leq k\},\\
c_2(x,\omega(x))(u(x)+k),\,& \text{ a.e on } \{u(x)<-k\},
\end{array}
\right.
\end{equation} 
then $f_k$ will be in $L^1(\Omega)$ if $c_1(x,\omega)(u-k)$ and $c_2(x,\omega)(u+k)\in L^1(\Omega)$, because
\begin{equation*}
\abs{f_k(x)}\leq \abs{c_1(x,\omega(x))(u(x)-k)}+\abs{c_2(x,\omega(x))(u(x)+k)} \text{ a.e }x\in\Omega.
\end{equation*}
To ensure that $c_1(x,\omega)(u-k)\in L^1(\Omega)$ and $c_2(x,\omega)(u+k)\in L^1(\Omega)$, it is enough to require that $c_1(x,\omega)$, $c_2(x,\omega)\in L^{q'}(\Omega)$ which is true by assumption since $r>d/2>q'$. In this case, it follows that $b(x,u+\omega)u_k\in L^1(\Omega)$ for each $k\ge 0$ and by Theorem~\ref{Thm_Brezis_Browder_for_the_extension_of_bdd_linear_functional_1978} \eqref{the_weak_formulation_tested_with_the_special_test_function} holds.
%%%%%%%%%%%%%%%%%%%%%%%%%%%%%%%%%%%%%%%%%%%%%%%%%%%%%%%%%%

\textbf{Estimation of the terms in \eqref{the_weak_formulation_tested_with_the_special_test_function}:} Now the goal is to show that the measure of the set $A(k)$ becomes zero for all $k\ge k_1>0$, where for $k\ge 0$ the set $A(k)$ is defined by 
\begin{equation*}
A(k):=\{x\in\Omega\,:\,|u(x)|>k\}.
\end{equation*}
This would mean that $\abs{u}\leq k_1$ for almost every $x\in\Omega$. The idea to show this is to obtain an inequality of the form \eqref{Lemma_B1_main_inequality} in Lemma~\ref{Lemma_B1_Kinderlehrer_Stampacchia} for the nonnegative and nonincreasing function $\Theta(k):=\abs{A(k)}$. To obtain such an inequality we estimate from below the term on the left-hand side of \eqref{the_weak_formulation_tested_with_the_special_test_function} and from above the terms on the right-hand side of \eqref{the_weak_formulation_tested_with_the_special_test_function}. 

First, by using \eqref{inequality_for_b_on_the_set_where_u_g_greater_than_k} and \eqref{inequality_for_b_on_the_set_where_u_g_less_than_k} we observe that for all $k\ge 0$ it holds
\begin{equation} \label{intermediate_inequality_01_boundedness_for_General_Semilinear_Problem}
\begin{aligned}
&\int_{\Omega}{b(x,u+\omega)u_k \dd x}=\int_{A(k)}{b(x,u+\omega)u_k \dd x}\\
=&\int_{\{u>k\}}{b(x,u+\omega)(u-k) \dd x}+\int_{\{u<-k\}}{b(x,u+\omega)(u+k) \dd x}\\
\ge & \int_{\{u>k\}}{c_1(x,\omega)(u-k)\dd x}+\int_{\{u<-k\}}{c_2(x,\omega)(u+k)\dd x}=\int_{A(k)}{c(x,\omega)u_k\dd x},
\end{aligned}
\end{equation} 
where the function $c:\Omega\times \mathbb R\to \mathbb R\cup \{+\infty\}$ is defined by
\begin{equation*}
c(x,\omega(x)):=
\left\{
\begin{array}{lll}
c_1(x,\omega(x)),\,& \text{ a.e on } \{u(x)\ge 0\},\\
c_2(x,\omega(x)),\,& \text{ a.e on } \{u(x)<0\}.
\end{array}
\right.
\end{equation*}
Now, we estimate the left-hand side of \eqref{the_weak_formulation_tested_with_the_special_test_function} from below. 
First by using the expression \eqref{weak_partial_derivative_of_G_k} for the weak partial derivatives of $u_k$, then the coercivity of $a(\cdot,\cdot)$, and finally Poincar\'e's inequality, we obtain
\begin{equation} \label{intermediate_inequality_02_boundedness_for_General_Semilinear_Problem}
\begin{aligned}
&a(u,u_k)=\int_{\Omega}{\bm A\nabla u\cdot \nabla u_k \dd x}=a(u_k,u_k)\ge \underline\alpha\|\nabla u_k\|_{L^2(\Omega)}^2\ge  \frac{\underline\alpha}{C_P^2+1} \|u_k\|_{H^1(\Omega)}^2
\end{aligned}
\end{equation} 
By combining \eqref{the_weak_formulation_tested_with_the_special_test_function} with the estimates \eqref{intermediate_inequality_01_boundedness_for_General_Semilinear_Problem} and  \eqref{intermediate_inequality_02_boundedness_for_General_Semilinear_Problem} we obtain the intermediate estimate
\begin{equation} \label{LHS_RHS_in_the_L_infty_estimate_of_the_general_nonlinear_problem}
\begin{aligned}
\frac{\underline\alpha}{C_P^2+1} \|u_k\|_{H^1(\Omega)}^2&\leq \absb{\int_{A(k)}c(x,\omega)u_k \dd x}+\absb{\int_{A(k)}f_0u_k \dd x}+\absb{\int_{A(k)}\bm f\cdot\nabla u_k \dd x}.
\end{aligned}
\end{equation} 
We continue by estimating from above all terms on the right-hand side of \eqref{LHS_RHS_in_the_L_infty_estimate_of_the_general_nonlinear_problem}. By applying H\"older's and Poincar\'e's inequalities we obtain 
\begin{equation} \label{third_term1_in_RHS_in_the_L_infty_estimate_of_the_general_nonlinear_problem}
\begin{aligned}
\absb{\int_{A(k)}f_0u_k \dd x}&\leq \|f_0\|_{L^{q'}(A(k))}\|u_k\|_{L^{q}(\Omega)}\leq C_E \|f_0\|_{L^{q'}(A(k))}\|u_k\|_{H^1(\Omega)}.
\end{aligned}
\end{equation} 
Thus if $f_0\in L^r(\Omega)$ with $r>q'$, again by using H\"older's inequality we obtain 
\begin{equation*}
\|f_0\|_{L^{q'}(A(k))}^{q'}\leq\|f_0\|_{L^r(A(k))}^{q'}\abs{A(k)}^{\frac{r-q'}{r}}.
\end{equation*}
By combining the last estimate with \eqref{third_term1_in_RHS_in_the_L_infty_estimate_of_the_general_nonlinear_problem}, we obtain
\begin{equation} \label{third_term2_in_RHS_in_the_L_infty_estimate_of_the_general_nonlinear_problem}
\absb{\int_{A(k)}{f_0u_k \dd x}}\leq C_E\|f_0\|_{L^r(\Omega)}\abs{A(k)}^{\frac{r-q'}{rq'}} \|u_k\|_{H^1(\Omega)}.
\end{equation} 
Similarly, we estimate ($r>q'$)
\begin{equation} \label{first_term_in_RHS_in_the_L_infty_estimate_of_the_general_nonlinear_problem}
\begin{aligned}
\absb{\int_{A(k)}c(x,\omega)u_k \dd x}&\leq \|c(x,\omega)\|_{L^{q'}(A(k))}\|u_k\|_{L^{q}(\Omega)}\\&\leq C_E\abs{A(k)}^{\frac{r-q'}{rq'}}\|c(x,\omega)\|_{L^r(\Omega)}\|u_k\|_{H^1(\Omega)}.
\end{aligned}
\end{equation} 
We continue with the estimation of the third term in the right-hand side of \eqref{LHS_RHS_in_the_L_infty_estimate_of_the_general_nonlinear_problem}:
\begin{equation} \label{fourth_term1_in_RHS_in_the_L_infty_estimate_of_the_general_nonlinear_problem}
\absb{\int_{A(k)}\bm f\cdot \nabla u_k \dd x}\leq \|\bm{f}\|_{L^2(A(k))}\|u_k\|_{H^1(\Omega)}
\end{equation} 
If $\bm f\in \left[L^s(\Omega)\right]^d$ with $s>2$, by using H\"older's inequality we obtain 
\begin{equation*}
\|\bm{f}\|_{L^2(A(k))}^2=\int_{A(k)}{\underbrace{\abs{\bm{f}}^2}_{\in L^{\frac{s}{2}}(\Omega)}1\dd x}\leq \Big(\int_{A(k)}{\abs{\bm{f}}^s \dd x}\Big)^{\frac{2}{s}}\Big(\int_{A(k)}{1 \dd x}\Big)^{\frac{s-2}{s}}=\|\bm{f}\|_{L^s(A(k))}^2|A(k)|^{\frac{s-2}{s}},
\end{equation*}
and hence by combining with \eqref{fourth_term1_in_RHS_in_the_L_infty_estimate_of_the_general_nonlinear_problem}, we arrive at the estimate
\begin{equation} \label{fourth_term2_in_RHS_in_the_L_infty_estimate_of_the_general_nonlinear_problem}
\absb{\int_{A(k)}{\bm f\cdot \nabla u_k \dd x}}\leq \|\bm{f}\|_{L^s(\Omega)}\abs{A(k)}^{\frac{s-2}{2s}}\|u_k\|_{H^1(\Omega)}.
\end{equation} 
%%%%%%%%%%%%%%%%%%%%%%%%%%%%%%%%%%%%%%%%%%%%%%%%%%%%%%%%%%%%%%%
Combining \eqref{LHS_RHS_in_the_L_infty_estimate_of_the_general_nonlinear_problem} with the estimates \eqref{third_term2_in_RHS_in_the_L_infty_estimate_of_the_general_nonlinear_problem}, \eqref{first_term_in_RHS_in_the_L_infty_estimate_of_the_general_nonlinear_problem},  \eqref{fourth_term2_in_RHS_in_the_L_infty_estimate_of_the_general_nonlinear_problem}  for the right-hand side terms in \eqref{LHS_RHS_in_the_L_infty_estimate_of_the_general_nonlinear_problem}, and then dividing by $\|u_k\|_{H^1(\Omega)}$, we obtain 
\begin{equation} \label{LHS_RHS2_in_the_L_infty_estimate_of_the_general_nonlinear_problem}
\begin{split}
&\frac{\underline\alpha}{C_P^2+1} \|u_k\|_{H^1(\Omega)}\\
&\qquad \leq  C_E\|c(x,\omega)\|_{L^r(\Omega)}\abs{A(k)}^{\frac{r-q'}{rq'}}+ C_E\|f_0\|_{L^r(\Omega)}\abs{A(k)}^{\frac{r-q'}{rq'}}+\|\bm{f}\|_{L^s(\Omega)}\abs{A(k)}^{\frac{s-2}{2s}}.
\end{split}
\end{equation} 
Now, it is left to estimate the left-hand side of \eqref{LHS_RHS2_in_the_L_infty_estimate_of_the_general_nonlinear_problem} from below in terms of the measure of the set $A(h)$ for $h>k$. We use again the Sobolev embedding theorem and the fact that $A(k)\supset A(h)$ for all $h>k\ge 0$:
\begin{equation} \label{intermediate_inequality_03_boundedness_for_General_Semilinear_Problem}
\begin{aligned}
&\|u_k\|_{H^1(\Omega)}\ge \frac{1}{C_E}\|u_k\|_{L^{q}(\Omega)}=\frac{1}{C_E}\Big(\int_{\Omega}{\abs{u_k}^{q}\dd x}\Big)^{\frac{1}{q}}=\frac{1}{C_E}\Big(\int_{A(k)}{|\underbrace{\abs{u}-k}_{>0}|^q \dd x}\Big)^{\frac{1}{q}}\\
&= \frac{1}{C_E}\Big(\int_{A(k)\setminus A(h)}{\left(\abs{u}-k\right)^q \dd x}+\int_{A(h)}{\left(\abs{u}-k\right)^q \dd x}\Big)^{\frac{1}{q}}\\
&\ge \frac{1}{C_E}\Big(\int_{A(h)}{(h-k)^q \dd x}\Big)^{\frac{1}{q}}=\frac{1}{C_E}(h-k)\abs{A(h)}^{\frac{1}{q}}.
\end{aligned}
\end{equation} 
From \eqref{LHS_RHS2_in_the_L_infty_estimate_of_the_general_nonlinear_problem}  and \eqref{intermediate_inequality_03_boundedness_for_General_Semilinear_Problem} it follows that
\begin{equation} \label{LHS_RHS3_in_the_L_infty_estimate_of_the_general_nonlinear_problem}
\begin{aligned}
&(h-k)\abs{A(h)}^{\frac{1}{q}}\\
&\quad\leq \frac{C_E(C_P^2+1)}{\underline\alpha} \Big[C_E\|c(x,\omega)\|_{L^r(\Omega)}\abs{A(k)}^{\frac{r-q'}{rq'}}+C_E\|f_0\|_{L^r(\Omega)}\abs{A(k)}^{\frac{r-q'}{rq'}}+\|\bm{f}\|_{L^s(\Omega)}\abs{A(k)}^{\frac{s-2}{2s}}\Big]\\
&\quad\leq C_M\Big(\abs{A(k)}^{\frac{s-2}{2s}}+\abs{A(k)}^{\frac{r-q'}{rq'}}\Big),
\end{aligned}
\end{equation} 
where 
\small
\begin{equation*}
C_M:=\frac{C_E(C_P^2+1)}{\underline\alpha}\max{\left\{C_E\left(\|c(x,\omega)\|_{L^r(\Omega)}+\|f_0\|_{L^r(\Omega)}\right),\|\bm{f}\|_{L^s(\Omega)}\right\}}.
\end{equation*}
\normalsize
We have obtained the following inequality for the measure of $A(k)$:
\begin{equation} \label{final_inequaliy1_for_Dk}
(h-k)\abs{A(h)}^{\frac{1}{q}}\leq C_M\Big(\abs{A(k)}^{\frac{s-2}{2s}}+\abs{A(k)}^{\frac{r-q'}{rq'}}\Big)\,\text{ for all }\, h>k\ge 0.
\end{equation} 
Since $u$ is summable it follows that $\abs{A(k)}={\text{meas}\left(\{x\in\Omega: \abs{u(x)}>k\}\right)\to 0}$ monotonically decreasingly as $k\to \infty$. For this reason, there exists a $k_0>0$ such that ${\abs{A(k)}\leq 1\,\text{ for all }\, k\ge k_0}$ (if $\abs{\Omega}\leq 1$, this is satisfied for all $k\ge 0$). Therefore \eqref{final_inequaliy1_for_Dk} takes the form
\begin{equation*}
(h-k)\abs{A(h)}^{\frac{1}{q}}\leq 2 C_M\abs{A(k)}^{\min{\{\frac{s-2}{2s},\frac{r-q'}{rq'}\}}}\,\text{ for all } \, h>k\ge k_0,
\end{equation*}
which is equivalent to the inequality
\begin{equation} \label{final_inequaliy2_for_Dk_d_ge3}
\abs{A(h)}\leq (2 C_M)^q \frac{\abs{A(k)}^{\min{\{\frac{s-2}{2s},\frac{r-q'}{rq'}\}}q}}{(h-k)^q}\,\text{ for all }\, h>k\ge k_0.
\end{equation} 
However, we want to find a $k_0$ which depends only on the data of the problem. For this, observe that from \eqref{LHS_RHS_in_the_L_infty_estimate_of_the_general_nonlinear_problem} for $k=0$, using H{\"o}lder's inequality and the embedding $H^1(\Omega)\hookrightarrow L^q(\Omega)$, we have
\begin{equation} \label{upper_estimate_of_the_squared_H1_norm_of_u_g}
\frac{\underline\alpha}{C_P^2+1} \|u\|_{H^1(\Omega)}^2\leq C_E\|c(x,\omega)\|_{L^{q'}(\Omega)}\|u\|_{H^1(\Omega)}+ C_E\|f_0\|_{L^{q'}(\Omega)}\|u\|_{H^1(\Omega)}+\|\bm{f}\|_{L^2(\Omega)}\|u\|_{H^1(\Omega)}.
\end{equation} 
By dividing both sides of \eqref{upper_estimate_of_the_squared_H1_norm_of_u_g} by $\|u\|_{H^1(\Omega)}$, for arbitrary $k\ge 0$, we obtain
\begin{equation} \label{upper_estimate_for_the_square_root_of_the_measure_of_A_k}
k\abs{A(k)}^{\frac{1}{2}}\leq\Big(\int_{\Omega}{\abs{u}^2 \dd x}\Big)^{\frac{1}{2}}
\leq \frac{C_P^2+1}{\underline\alpha}\left(C_E\|c(x,\omega)\|_{L^{q'}(\Omega)}+C_E\|f_0\|_{L^{q'}(\Omega)}+\|\bm{f}\|_{L^2(\Omega)}\right).
\end{equation} 
If we denote by $C_D$ the constant on the right hand side of inequality \eqref{upper_estimate_for_the_square_root_of_the_measure_of_A_k}, which depends only on the data of the problem \eqref{L_infty_estimate_more_general_nonlinear_problem}, then a sufficient condition for $\abs{A(k)}\leq 1$ will be
\begin{equation*}
\frac{C_D^2}{k^2}\leq 1,
\end{equation*}
which is equivalent to $k\ge C_D=:k_0$. Here we recall that for $d=2$, $q'$ can be any number greater than 1 and for $d>2$ we have $q=\frac{2d}{d-2}$. Since we have required $r>q'$, the constant $C_D$ is well defined.
%%%%%%%%%%%%%%%%%%%%%%%%%%%%%%%%%%%%%%%%%%%
In order to apply Lemma \ref{Lemma_B1_Kinderlehrer_Stampacchia} to the nonnegative and nonincreasing function $\Theta(k)=\abs{A(k)}$ we need to ensure that
\begin{equation*}
\min{\left\{\frac{s-2}{2s},\frac{r-q'}{rq'}\right\}}>\frac{1}{q},
\end{equation*}
which is equivalent to 
\begin{equation} \label{the_two_inequalities_for_the_min_in_L_infty_general_theorem_1}
\frac{s-2}{2s}>\frac{1}{q}\quad\text{ and }\quad \frac{r-q'}{rq'}>\frac{1}{q}.
\end{equation} 
The first inequality in \eqref{the_two_inequalities_for_the_min_in_L_infty_general_theorem_1} is equivalent to $s>\frac{2q}{q-2}$ and the second to $r>\frac{q}{q-2}$. We also recall that in the course of the proof we have required that $s>2$.
\begin{itemize}
\item For $d=2$, we have $H^1(\Omega)\hookrightarrow L^q(\Omega)$ for any $q<\infty$. In this case the requirements on $s$ and $r$ become $s>2,\, r>1$.
\item For $d\ge3$, we have $H^1(\Omega)\hookrightarrow L^q(\Omega)$ where $q=\frac{2d}{d-2}$ and $q'=\frac{2d}{d+2}$. In this case the requirements on $s$ and $r$ are $s>d,\,r>\frac{d}{2}$.
\end{itemize}
We can summarize the conditions on $s$ and $r$ for $d\ge 2$ as $s>d$ and $r>\frac{d}{2}$. Now, if we denote $\beta:=\min{\{\frac{s-2}{2s},\frac{r-q'}{rq'}\}}q$ from Lemma~\ref{Lemma_B1_Kinderlehrer_Stampacchia} it follows that there exists a constant~$e$, defined by $e^q:=(2C_M)^q \abs{A(k_0)}^{\beta-1}2^{\frac{q\beta}{\beta-1}}$ such that $\abs{A(k_0+e)}=0$. Since $\abs{A(k_0)}\leq \abs{\Omega}$, we can write $\abs{A(k_1)}=0$, where $k_1:=k_0+\big((2C_M)^q \abs{\Omega}^{\beta-1}2^{\frac{q\beta}{\beta-1}}\big)^{\frac{1}{q}}=C_D+(2C_M) \abs{\Omega}^{\frac{\beta-1}{q}}2^{\frac{\beta}{\beta-1}}$. Thus, we have proved that $\|u\|_{L^\infty(\Omega)}\leq k_1$.
\end{proof}

\begin{theorem}\label{Thm_existence_and_uniqueness_and_Boundedness_of_minimizer_of_J_General_PBE}
The unique minimizer $u_{\rm{min}}\in H_{\overline{g}}^1(\Omega)$ of the variational problem \eqref{Variational_problem_for_J} provided by Theorem~\ref{Thm_existence_and_uniqueness_of_minimizer_of_J_General_PBE} coincides with the unique solution of problem \eqref{general_form_of_the_weak_formulation_for_u_2_and_3_term_splitting_H1_spaces} for the test space $H_0^1(\Omega)$.
% has a unique minimizer $u_{\rm{min}}\in H_{\overline{g}}^1(\Omega)$ which coincides with the unique solution of problem \eqref{general_form_of_the_weak_formulation_for_u_2_and_3_term_splitting_H1_spaces} for the test space $H_0^1(\Omega)$, which also includes $\Wb=C_c^\infty(\Omega)$ and $\Wb=H_0^1(\Omega)\cap L^\infty(\Omega)$.
\end{theorem}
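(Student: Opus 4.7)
The plan is to use the a priori $L^\infty$ estimate of Theorem~\ref{Thm_Boundedness_of_solution_to_general_semilinear_elliptic_problem} to upgrade the weak formulation satisfied by the minimizer $u_{\rm{min}}$, which is already known to hold for $\Wb=H_0^1(\Omega)\cap L^\infty(\Omega)$ by the preceding proposition, to the larger test space $\Wb=H_0^1(\Omega)$ via a density argument. Uniqueness for this test space was already established in Section~\ref{Section_Uniqueness_in_H1}, so once existence is shown, the claimed coincidence is automatic.

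First I would verify the hypotheses of Theorem~\ref{Thm_Boundedness_of_solution_to_general_semilinear_elliptic_problem} for the (homogenized, cf.\ Remark~\ref{nonhomogeneous_Dirichlet_BC}) version of \eqref{general_form_of_the_weak_formulation_for_u_2_and_3_term_splitting_H1_spaces}. The datum $\overline g$ is Lipschitz in either splitting, since $g_\Omega\in C^{0,1}(\partial\Omega)$ and $G$ is smooth in a neighborhood of $\Gamma$, so Lemma~\ref{Lemma_extension_pf_Lipschitz} yields an extension in $W^{1,\infty}(\Omega)$. The nonlinearity $b(x,\cdot)$ is monotone nondecreasing, so we take $c_1=c_2=b$ in Theorem~\ref{Thm_Boundedness_of_solution_to_general_semilinear_elliptic_problem}, and the crucial observation is that $b(x,\cdot)\equiv 0$ on $\Omega_m\cup\Omega_{IEL}$. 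Since $w$ equals $G$ (2-term splitting) or $0$ (3-term splitting), and $G$ is smooth in a neighborhood of $\overline{\Omega_{ions}}\subset\overline{\Omega_s}$, the effective $\omega$ lies in $L^\infty(\Omega)$, so $b(x,\omega)\in L^\infty(\Omega)\subset L^r(\Omega)$ for any $r$. Finally, $\bm f\in [L^s(\Omega)]^d$ for some $s>d$, as discussed in the footnote after \eqref{general_form_of_the_weak_formulation_for_u_2_and_3_term_splitting_W1p_spaces}, and $f_0=0$. Theorem~\ref{Thm_Boundedness_of_solution_to_general_semilinear_elliptic_problem} (applied via Remark~\ref{nonhomogeneous_Dirichlet_BC}) then gives $\|u_{\rm{min}}\|_{L^\infty(\Omega)}\le\gamma$.

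Armed with this bound, $b(x,u_{\rm{min}}+w)\in L^\infty(\Omega)$ with support in $\overline{\Omega_{ions}}$, so $b(x,u_{\rm{min}}+w)v\in L^1(\Omega)$ for every $v\in H_0^1(\Omega)\subset L^2(\Omega)$. To pass from $\Wb=H_0^1(\Omega)\cap L^\infty(\Omega)$ to $\Wb=H_0^1(\Omega)$, given $v\in H_0^1(\Omega)$ I would use the truncations $v_n := T_n(v)\in H_0^1(\Omega)\cap L^\infty(\Omega)$; by Stampacchia's theorem $v_n\to v$ in $H_0^1(\Omega)$, so the linear terms $a(u_{\rm{min}},v_n)$ and $\int_\Omega \bm f\cdot\nabla v_n\,\dd x$ converge to their limits by continuity. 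For the nonlinear term, $v_n\to v$ pointwise a.e.\ with $|v_n|\le|v|$, and $|b(x,u_{\rm{min}}+w)|\,|v|\in L^1(\Omega)$ serves as an $L^1$ dominant, so the dominated convergence theorem delivers $\int_\Omega b(x,u_{\rm{min}}+w)v_n\,\dd x\to \int_\Omega b(x,u_{\rm{min}}+w)v\,\dd x$. Hence $u_{\rm{min}}$ solves \eqref{general_form_of_the_weak_formulation_for_u_2_and_3_term_splitting_H1_spaces} for $\Wb=H_0^1(\Omega)$, and by uniqueness it is the unique such solution.

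The main obstacle I expect is purely a bookkeeping one: checking that Theorem~\ref{Thm_Boundedness_of_solution_to_general_semilinear_elliptic_problem} genuinely applies to both the 2-term and 3-term splittings without further hypotheses. This relies on the biomolecular feature that $b$ vanishes precisely on the region $\Omega_m\cup\Omega_{IEL}$ containing the point charges and the support of the singular parts of the data, so that the effective $\omega$ remains bounded. Once this is verified, the remaining steps are standard approximation arguments.
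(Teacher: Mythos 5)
Your proposal is correct and follows essentially the same route as the paper: establish $u_{\rm{min}}\in L^\infty(\Omega)$ via Theorem~\ref{Thm_Boundedness_of_solution_to_general_semilinear_elliptic_problem} (through Remark~\ref{nonhomogeneous_Dirichlet_BC}), checking the same hypotheses on $\bm f$, $\omega$, and $\overline g$ for both splittings, and then pass to $\Wb=H_0^1(\Omega)$ by density. The only difference is that you spell out the truncation-and-dominated-convergence details of the density argument, which the paper leaves implicit as ``a standard density argument''.
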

\begin{proof}
We already showed that $u_{\rm{min}}$ equals the unique solution $u$ of \eqref{general_form_of_the_weak_formulation_for_u_2_and_3_term_splitting_H1_spaces} with $\Wb=H_0^1(\Omega)\cap L^\infty(\Omega)$. If we were able to show that $u\in L^\infty(\Omega)$, it would follow that $b(x,u+w)\in L^\infty(\Omega)$ and therefore by a standard density argument we obtain that $u$ is also the unique solution of \eqref{general_form_of_the_weak_formulation_for_u_2_and_3_term_splitting_H1_spaces} with $\Wb=H_0^1(\Omega)$. 
%To use the $L^\infty$ estimate of Theorem~\ref{Thm_Boundedness_of_solution_to_general_semilinear_elliptic_problem}, we need to take care of the nonhomoeneous boundary condition on $\partial\Omega$ as in Remark~\ref{nonhomogeneous_Dirichlet_BC}. 

We would like to use the $L^\infty$ estimate of Theorem~\ref{Thm_Boundedness_of_solution_to_general_semilinear_elliptic_problem} through the modification for nonhomogeneous boundary conditions given in Remark~\ref{nonhomogeneous_Dirichlet_BC}, to the weak formulation
\begin{equation}\label{copy_RCH1}
\begin{aligned}
&\text{Find }u\in H_{\overline g}^1(\Omega) \text{ such that } b(x,u+w)v\in L^1(\Omega) \,\text{ for all } v\in \Wb \text{ and }\\
&\int_{\Omega}{\epsilon\nabla u\cdot\nabla v \dd x}+\int_{\Omega}{b(x,u+w)v \dd x}=\int_{\Omega}{{\bm f}\cdot\nabla v \dd x}\, \text{ for all } v\in \Wb,
\end{aligned}\tag{\ref{general_form_of_the_weak_formulation_for_u_2_and_3_term_splitting_H1_spaces}}
\end{equation} \noeqref{copy_RCH1}
with the choices of $\bm f$ and $\bar{g}$ corresponding to the 2-term or 3-term splitting, that is respectively
\begin{alignat}{12}
w&=G, \quad {\bm f}&=&{\bm f}_{\Gb_2}:=\chi_{\Omega_s}(\epsilon_m-\epsilon_s)\nabla G,\quad \quad &\text{ and }\quad &\overline{g}&=&g_\Omega-G &\text{ on }&\partial\Omega, \label{copy_4_2} \tag{\ref{w_barg_f_expressions_2-term_splitting}}\\
w&=0, \quad {\bm f}&=&{\bm f}_{\Gb_3}:=-\chi_{\Omega_m}\epsilon_m\nabla u^H+\chi_{\Omega_s}\epsilon_m\nabla G,\quad &\text{ and }\quad &\overline{g}&=&g_\Omega &\text{ on }&\partial\Omega.\label{copy_4_3} \tag{\ref{w_barg_f_expressions_3-term_splitting}}
\end{alignat}\noeqref{copy_4_2}\noeqref{copy_4_3}
For this, notice that on the one hand ${\bm f}_{\Gb_2} \in \left[L^s(\Omega)\right]^d$ for all $s>d$ since $\chi_{\Omega_s}\nabla G \in \left[L^\infty(\Omega)\right]^d$, $\epsilon_s \in C^{0,1}(\overline{\Omega_s})$, and $\epsilon_m$ is constant. For this case, moreover $\omega = \chi_{\Omega_{ions}} G \in L^\infty(\Omega)$. On the other hand, we also have ${\bm f}_{\Gb_3}  \in \left[L^s(\Omega)\right]^d $ for some $s>d$, since $\nabla u^H$ belongs to this space by Proposition~\ref{proposition_u^H_is_in_W1s_s_ge_d} taking into account that\footnote{Note that the assumption $\Gamma\in C^1$ is not needed to show the $L^\infty$ estimate on the regular component $u$ for the 2-term splitting.} $\Gamma \in C^1(\partial \Omega)$. Moreover, since in both cases $\overline{g} \in C^{0,1}(\partial \Omega)$ we have that its extension $u_{\overline{g}}$ from Lemma~\ref{Lemma_extension_pf_Lipschitz} belongs to $W^{1, \infty}(\Omega)$, in particular $u_{\overline{g}} \in L^\infty(\Omega)$ and $\epsilon \nabla u_{\overline{g}} \in \left[L^s(\Omega)\right]^d$ for all $s>d$.
\end{proof}

As a consequence of the previous theorem and the discussion after \eqref{what_is_Wb} we have obtained the following existence theorem for the General Poisson-Boltzmann equation \eqref{GPBE_dimensionless}.
\begin{theorem}\label{Theorem_Existence_for_full_potential_GPBE}
 There exists a weak solution $\phi$ of equation \eqref{GPBE_dimensionless} satisfying \eqref{weak_formulation_General_PBE_W1p_spaces}. A~particular $\phi$ satisfying \eqref{weak_formulation_General_PBE_W1p_spaces} can be given either in the form  $\phi=G+u$ or in the form $\phi=G+u^H+u$, where $u\in H_{\overline g}^1(\Omega)\cap L^\infty(\Omega)$ 
is the unique solution of \eqref{general_form_of_the_weak_formulation_for_u_2_and_3_term_splitting_H1_spaces} with either $\Wb=H_0^1(\Omega)$, $\Wb=H_0^1(\Omega)\cap L^\infty(\Omega)$ or $\Wb=C_c^\infty(\Omega)$, and $\overline g, w, {\bm f}$ are defined by \eqref{w_barg_f_expressions_2-term_splitting} and \eqref{w_barg_f_expressions_3-term_splitting} for the 2- and 3-term splitting, respectively.
\end{theorem}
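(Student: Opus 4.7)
The plan is to combine the previous theorem with the integral identity \eqref{Weak_formulation_for_G} satisfied by $G$ (and in the 3-term case, the harmonicity property \eqref{definition_of_uH_3-term_splitting} of $u^H$). By Theorem~\ref{Thm_existence_and_uniqueness_and_Boundedness_of_minimizer_of_J_General_PBE}, for either choice of $\overline g, w, \bm f$ from \eqref{w_barg_f_expressions_2-term_splitting} or \eqref{w_barg_f_expressions_3-term_splitting}, the minimizer $u_{\rm min}$ of $J$ is the unique element of $H_{\overline g}^1(\Omega)\cap L^\infty(\Omega)$ satisfying \eqref{general_form_of_the_weak_formulation_for_u_2_and_3_term_splitting_H1_spaces} for any of the three spaces $\Wb \in \{C_c^\infty(\Omega),\, H_0^1(\Omega)\cap L^\infty(\Omega),\, H_0^1(\Omega)\}$. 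Since $\Nb \subset H_0^1(\Omega)$ by the Sobolev embedding $W_0^{1,q}(\Omega)\hookrightarrow H_0^1(\Omega)$ for $q>d$ on a bounded domain, this $u$ a fortiori solves \eqref{general_form_of_the_weak_formulation_for_u_2_and_3_term_splitting_W1p_spaces}.

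Next I would define $\phi := G+u$ in the 2-term case and $\phi := G+u^H+u$ in the 3-term case and check membership in $\Mb_{g_\Omega}$. For both splittings $G \in \Mb_G$ and $u \in H_{\overline g}^1(\Omega) \subset \Mb_{\overline g}$; for the 3-term splitting we additionally use $u^H \in H^1(\Omega) \subset \Mb$ together with the trace condition $u^H|_{\partial\Omega} = -G|_{\partial\Omega}$ to obtain the correct boundary value $g_\Omega$ on $\partial \Omega$. To verify the integrability condition $b(x,\phi)v \in L^1(\Omega)$ for $v \in \Nb$, I use the key structural fact that $b(x,\cdot)\equiv 0$ on $\Omega_m \cup \Omega_{IEL}$, so the only active region is $\Omega_{ions}\subset \Omega_s$, where $G$ is smooth and bounded. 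Moreover, in $\Omega_{ions}$ the 3-term splitting satisfies $u^H = -G$, so that $\phi = u$ there; in either case $\phi|_{\Omega_{ions}} \in L^\infty(\Omega_{ions})$ and therefore $b(x,\phi) \in L^\infty(\Omega)$, while $v$ is continuous up to the boundary, giving the required integrability and in fact also $b(x,\phi)=b(x,u+w)$ a.e.\ on $\Omega$.

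The remaining step is to add the weak identity for $u$ to \eqref{Weak_formulation_for_G} (valid for all $v \in \Nb$) in a way that exactly reproduces \eqref{weak_formulation_General_PBE_W1p_spaces}. For the 2-term splitting, splitting $\int_\Omega \epsilon \nabla G \cdot \nabla v\,dx = \int_\Omega \epsilon_m \nabla G\cdot\nabla v\,dx + \int_{\Omega_s}(\epsilon_s-\epsilon_m)\nabla G\cdot\nabla v\,dx$ and using \eqref{Weak_formulation_for_G} on the first term produces $\langle \Fb,v\rangle$ plus a boundary-like correction that cancels exactly with $-\langle \Gb_2, v\rangle$ appearing on the right of the equation for $u$. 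For the 3-term splitting, one analogously decomposes $\int_\Omega \epsilon\nabla u^H\cdot\nabla v\,dx$ using that $u^H = -G$ on $\Omega_s$, combines with $\int_\Omega \epsilon\nabla G\cdot\nabla v\,dx$, and what survives after cancellation is $\int_{\Omega_m}\epsilon_m \nabla(G+u^H)\cdot\nabla v\,dx$; the $u^H$ contribution cancels with the term $-\int_{\Omega_m}\epsilon_m \nabla u^H\cdot\nabla v\,dx$ from the right-hand side $\langle \Gb_3, v\rangle$, and adding $\int_{\Omega_s}\epsilon_m\nabla G\cdot\nabla v\,dx$ back in (also from $\langle \Gb_3, v\rangle$) reconstructs $\int_\Omega \epsilon_m\nabla G\cdot\nabla v\,dx = \langle \Fb,v\rangle$ by \eqref{Weak_formulation_for_G}.

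The proof is essentially bookkeeping, and I do not anticipate any serious obstacle since all hard analytical work (existence of the minimizer, $L^\infty$ bound, equivalence of the various test-space formulations) has been carried out in Theorems~\ref{Thm_existence_and_uniqueness_of_minimizer_of_J_General_PBE}--\ref{Thm_existence_and_uniqueness_and_Boundedness_of_minimizer_of_J_General_PBE}. The only point requiring mild care is the 3-term algebraic identity above, where one must exploit simultaneously that $u^H$ has trace $-G$ on $\Gamma$ (ensuring $u^H \in H^1(\Omega)$), that $u^H$ is weakly harmonic in $\Omega_m$ only when tested against $H_0^1(\Omega_m)$ (not needed here since we do not integrate by parts over subdomains), and that $\bm n_\Gamma$-jump contributions never appear because we work directly with integrals over $\Omega$ in the $H^1$ weak formulation.
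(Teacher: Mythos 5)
Your proof is correct and takes essentially the same route as the paper, which presents Theorem~\ref{Theorem_Existence_for_full_potential_GPBE} as a direct consequence of Theorem~\ref{Thm_existence_and_uniqueness_and_Boundedness_of_minimizer_of_J_General_PBE} together with the discussion following \eqref{what_is_Wb}. Your additional algebra (adding \eqref{Weak_formulation_for_G} to the regular-component equation and invoking $u^H=-G$ in $\Omega_s$) simply unwinds the derivations of \eqref{Weak_formulation_for_uregular_General_PBE_W1p_spaces} and \eqref{Weak_formulation_for_uregular_3-term_splitting_General_PBE_W1p_spaces} that the paper already carried out in Section~\ref{Section_Splitting}, and your integrability check $b(x,\phi)v\in L^1(\Omega)$ matches the observations in Remark~\ref{Remark_after_Thm_Existence_for_full_potential_GPBE}.
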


\begin{remark}\label{Remark_after_Thm_Existence_for_full_potential_GPBE}
Even if $u$ is the unique solution of \eqref{general_form_of_the_weak_formulation_for_u_2_and_3_term_splitting_H1_spaces}, it might not be the unique solution of \eqref{general_form_of_the_weak_formulation_for_u_2_and_3_term_splitting_W1p_spaces} where the space of test functions, also used in \eqref{weak_formulation_General_PBE_W1p_spaces}, is smaller. To try to close this gap, in Theorem~\ref{Theorem_uniqueness_for_GPBE} we show that if the interface $\Gamma$ is $C^1$ then solutions $\phi$ of \eqref{weak_formulation_General_PBE_W1p_spaces} such that $b(x,\phi)$ is integrable\footnote{Notice that the condition $b(x,\phi)\in L^1(\Omega)$ is slightly more restrictive than the condition $b(x,\phi)v\in L^1(\Omega)$ for all test functions in $\Nb$.} are unique. Notice that the solutions provided by Theorem \ref{Theorem_Existence_for_full_potential_GPBE} satisfy this condition, since $b(x,\cdot)$ vanishes for all $x \in \Omega_m$, the Coulomb potential $G$ is by definition bounded in $\Omega \setminus \Omega_m$ and we just proved in Theorem \ref {Thm_Boundedness_of_solution_to_general_semilinear_elliptic_problem} that $u \in L^\infty(\Omega)$ as well. In case the 3-term splitting is used, $u^H$ is also bounded by Proposition \ref{proposition_u^H_is_in_W1s_s_ge_d}.
\end{remark}
	
\subsection{Uniqueness of the full potential \texorpdfstring{$\phi$}{phi}}\label{Section_uniqueness_for_GPBE}
The proof of uniqueness is based on the following two well-known results for the duality solution framework, which we are able to adapt to weak solutions in the sense of \eqref{weak_formulation_General_PBE_W1p_spaces} with just minor modifications.

\begin{lemma}[analogous to Lemma B.1 from \cite{Brezis_Marcus_Ponce_Nonlinear_Elliptic_With_Measures_Revisited}]\label{Lemma_B1}
Let $\Omega$, $\Omega_m$ and $\bm A$ be as in Theorem \ref{Theorem_uniqueness_of_linear_elliptic_problems_with_diffusion_and_measure_rhs}. Let $p:\mathbb R\to\mathbb R$, $p(0)=0$, be a nondecreasing, bounded and Lipschitz continuous function. Given $f\in L^1(\Omega)$, let $\varphi\in \Mb_0 = \bigcap_{p<\frac{d}{d-1}}{W_0^{1,p}(\Omega)}$ be the unique solution provided by Theorem~\ref{Theorem_uniqueness_of_linear_elliptic_problems_with_diffusion_and_measure_rhs} of 
\begin{equation} \label{linear_problem_with_L1_rhs}
\int_{\Omega}{\bm A\nabla \varphi\cdot\nabla v \dd x}=\int_{\Omega}{fv \dd x}\quad\text{ for all } v\in \Nb = \bigcup_{q>d} W_0^{1,q}(\Omega).
\end{equation} 
Then
\begin{equation} \label{int_fp(u)_nonnegative}
\int_{\Omega}{fp(\varphi)\dd x}\ge 0.
\end{equation} 
\end{lemma}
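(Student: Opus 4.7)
I follow an approximation strategy in the spirit of Boccardo--Gallou\"et. Since the natural test function $p(\varphi)$ need not belong to $\Nb$, I instead regularize the data: choose $\{f_n\} \subset L^2(\Omega)$ with $f_n \to f$ strongly in $L^1(\Omega)$ (e.g.\ by truncation, $f_n = T_n f$, or by mollification), and let $\varphi_n \in H_0^1(\Omega)$ be the unique Lax--Milgram solution of
\[
\int_\Omega \bm A \nabla \varphi_n \cdot \nabla v \, \dd x = \int_\Omega f_n v \, \dd x \quad \text{for all } v \in H_0^1(\Omega).
\]
Since $\Nb \subset H_0^1(\Omega)$, each $\varphi_n$ in particular solves the formulation with test space $\Nb$, and by the uniqueness part of Theorem~\ref{Theorem_uniqueness_of_linear_elliptic_problems_with_diffusion_and_measure_rhs} (which uses $\Gamma \in C^1$), any $\Mb_0$-limit of $\varphi_n$ must coincide with $\varphi$.

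\textbf{Monotone test at the regular level.} Since $p$ is Lipschitz continuous with $p(0) = 0$, the Stampacchia chain rule gives $p(\varphi_n) \in H_0^1(\Omega) \cap L^\infty(\Omega)$ with $\nabla p(\varphi_n) = p'(\varphi_n) \nabla \varphi_n$ a.e. (where $p$ is nondecreasing, so $p' \geq 0$ a.e.). Testing the equation for $\varphi_n$ against $p(\varphi_n)$ yields
\[
\int_\Omega p'(\varphi_n)\,\bm A \nabla \varphi_n \cdot \nabla \varphi_n \, \dd x = \int_\Omega f_n \, p(\varphi_n) \, \dd x,
\]
and the left-hand side is nonnegative by the ellipticity assumption \eqref{uniform_ellipticity_of_A} combined with $p' \geq 0$. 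Hence $\int_\Omega f_n\, p(\varphi_n) \, \dd x \geq 0$ for every $n$.

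\textbf{Passage to the limit.} The Boccardo--Gallou\"et approximation theorem guarantees that $\varphi_n \to \varphi$ strongly in $W_0^{1,s}(\Omega)$ for every $s < d/(d-1)$; extracting a subsequence, we may assume $\varphi_n \to \varphi$ pointwise a.e. in $\Omega$. By continuity of $p$, $p(\varphi_n) \to p(\varphi)$ a.e., and $|p(\varphi_n)| \leq \|p\|_{L^\infty(\mathbb R)}$ uniformly in $n$. Splitting
\[
\int_\Omega f_n\, p(\varphi_n) \, \dd x - \int_\Omega f\, p(\varphi) \, \dd x = \int_\Omega (f_n - f)\, p(\varphi_n) \, \dd x + \int_\Omega f\,\bigl(p(\varphi_n) - p(\varphi)\bigr) \, \dd x,
\]
the first term is bounded by $\|p\|_{L^\infty} \|f_n - f\|_{L^1(\Omega)} \to 0$, while the second converges to zero by Lebesgue dominated convergence (the integrands are dominated by $2\|p\|_{L^\infty} |f| \in L^1(\Omega)$). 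Combining with the inequality from the previous step gives $\int_\Omega f\, p(\varphi)\, \dd x \geq 0$.

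\textbf{Main obstacle.} The only genuine subtlety is that $\varphi$ lies in $\Mb_0$ but not in $\Nb$, so $p(\varphi)$ is an illegitimate test function at the level of \eqref{linear_problem_with_L1_rhs}. The proof dodges this by establishing the inequality on the level of the $H_0^1$ approximants (where the chain rule and monotone testing are standard) and relying on boundedness of $p$ plus $L^1$-convergence of $f_n$ to transfer the inequality through the limit; boundedness of $p$ is essential here, since in general the approximants $\varphi_n$ do not converge in $L^\infty$ or even in $L^r$ for $r$ large.
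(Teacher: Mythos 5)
Your proof is correct and follows essentially the same route as the paper: regularize the data $f$ to get $H_0^1$ solutions $\varphi_n$, test with $p(\varphi_n)$ using Stampacchia's chain rule and the sign of $p'$ to get $\int_\Omega f_n\,p(\varphi_n)\,\dd x \ge 0$, appeal to the Boccardo--Gallou\"et convergence plus the uniqueness of Theorem~\ref{Theorem_uniqueness_of_linear_elliptic_problems_with_diffusion_and_measure_rhs} to identify the limit as $\varphi$, and pass to the limit using boundedness of $p$ and dominated convergence. The only cosmetic divergence is in the splitting at the last step: the paper groups $|f_n|\,|p(\varphi_n)-p(\varphi)| + |f_n-f|\,|p(\varphi)|$ and needs an $L^1$ majorant $h$ for $\{f_n\}$, while you group $(f_n-f)\,p(\varphi_n) + f\,(p(\varphi_n)-p(\varphi))$ and dominate directly by $|f|$, which is marginally tidier but not a different argument.
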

\begin{proof}
Let $\{f_n\}\subset L^\infty(\Omega)$ be a sequence such that $f_n\to f$ in $L^1(\Omega)$ with $\|f_n\|_{L^1(\Omega)}\leq \|f\|_{L^1(\Omega)}$ for all $n\ge 1$ ($f_n$ can be chosen in $C_c^\infty(\Omega)$ by mollification - see, e.g., Corollary 4.23 in \cite{Brezis_FA}). Then, we know that there is a unique $\varphi_n\in H_0^1(\Omega)$ which satisfies the problem\footnote{Note that from Theorem~\ref{Thm_Optimal_Regularity_for_Elliptic_Interface_Problems} it follows that there is some $\bar q>d$ such that  $\varphi_n\in W_0^{1,\bar q}(\Omega)$ for all $n\ge 1$. Therefore, \eqref{linear_problem_with_Linfty_rhs} also holds for all $v\in W_0^{1,\bar q'}(\Omega)$ with $\bar q'=\bar q/(\bar q-1)<d/(d-1)$.}
\begin{equation} \label{linear_problem_with_Linfty_rhs}
\int_{\Omega}{\bm A\nabla \varphi_n\cdot\nabla v \dd x}=\int_{\Omega}{f_nv \dd x}\quad\text{ for all } v\in  H_0^1(\Omega).
\end{equation} 
Since $p\in C^{0,1}(\mathbb R)$, $p(0)=0$, and $\varphi_n\in H_0^1(\Omega)$ by Stampacchia's superposition theorem it follows that $p(\varphi_n)\in H_0^1(\Omega)$  and we can test \eqref{linear_problem_with_Linfty_rhs} with it. Thus,
\begin{equation} \label{int_fp(un)_nonnegative}
\int_{\Omega}{f_n p(\varphi_n) \dd x}=\int_{\Omega}{p'(\varphi_n) \bm A \nabla \varphi_n\cdot \nabla \varphi_n \dd x}\ge 0.
\end{equation} 
Now, our goal is to pass to the limit in \eqref{int_fp(un)_nonnegative}. From Theorem 4.9 in \cite{Brezis_FA} it follows that there exists some $h\in L^1(\Omega)$ and a subsequence (not renamed) for which $f_n(x)\to f(x)$ a.e. and $\abs{f_n(x)}\leq h(x)$ a.e.. Also, from the proof of Theorem 1 in \cite{Boccardo_Gallouet_1989} (in particular equation (20) there) we know that $\varphi_n\rightharpoonup \varphi$ weakly in $W^{1,p}(\Omega)$ for every $p<d/(d-1)$. Thus, up to another subsequence (again not relabeled) one has $\varphi_n\to \varphi$ strongly in $L^p(\Omega)$ and hence also pointwise almost everywhere in~$\Omega$. With this in mind we obtain
\begin{equation} \label{passing_to_the_limit_fnp(un)}
\begin{aligned}
\absb{\int_{\Omega}{f_np(\varphi_n) \dd x}-\int_{\Omega}{fp(\varphi) \dd x}}&\leq \int_{\Omega}{\abs{f_n}\abs{p(\varphi_n)-p(\varphi)}\dd x} + \int_{\Omega}{\abs{f_n-f}\abs{p(\varphi)} \dd x}.
\end{aligned}
\end{equation} 
The first term in \eqref{passing_to_the_limit_fnp(un)} converges to zero by the Lebesgue dominated convergence theorem since we have pointwise convergence of the integrand and also $\abs{f_n}\abs{p(\varphi_n)-p(\varphi)}\leq 2h M\in L^1(\Omega)$, where $M:=\max_{t\in\mathbb R}\abs{p(t)}$. The second term in \eqref{passing_to_the_limit_fnp(un)} converges to zero because $p(\varphi)\in L^\infty(\Omega)$ and $f_n\to f$ in $L^1(\Omega)$.
\end{proof}

We define the function ${\rm sgn}:\mathbb R\to\mathbb R$ by ${\rm sgn}(t)=1$ if $t>0$, ${\rm sgn}(t)=-1$ if $t<0$ and ${\rm sgn}(t)=0$ if $t=0$. By $\mu^+$, $\mu^-\in \Mp(\Omega)$ we denote the positive and negative parts of $\mu$, obtained by the Jordan decomposition (see Theorem B.71 in \cite{Leoni_2017}), and such that $\mu=\mu^+-\mu^-$.

\begin{proposition}[analogous to Proposition B.3 from \cite{Brezis_Marcus_Ponce_Nonlinear_Elliptic_With_Measures_Revisited}]\label{Proposition_B3}
Let $\Omega$, $\Omega_m$ and $\bm A$ be as in Theorem \ref{Theorem_uniqueness_of_linear_elliptic_problems_with_diffusion_and_measure_rhs}. and let $f\in L^1(\Omega)$, $\mu\in \Mp(\Omega)$. Let $z\in \bigcap_{p<\frac{d}{d-1}}W_0^{1,p}(\Omega)$ be the unique solution of 
\begin{equation} \label{linear_problem_with_L1_term_and_measure_rhs}
\int_{\Omega}{\bm A\nabla z\cdot\nabla v \dd x} +\int_{\Omega}{fv\dd x}=\int_{\Omega}{v \dd\mu}\quad \text{ for all } v\in \bigcup_{q>d}{W_0^{1,q}(\Omega)}.
\end{equation} 
Then,
\begin{equation} \label{PropB3_auxiliary_inequalities}
\int_{\left[z>0\right]}{f \dd x}\leq \|\mu^+\|_{\Mp(\Omega)}\quad \text{ and }\quad  -\int_{\left[z<0\right]}{f \dd x}\leq \|\mu^-\|_{\Mp(\Omega)},
\end{equation} 
and therefore 
\begin{equation} \label{PropB3_main_result}
\int_{\Omega}{f\,{\rm sgn}(z) \dd x}\leq \|\mu\|_{\Mp(\Omega)}.
\end{equation} 
\end{proposition}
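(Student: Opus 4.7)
The third inequality is an immediate consequence of the first two, since ${\rm sgn}(z) = \chi_{[z>0]} - \chi_{[z<0]}$ and $\|\mu\|_{\Mp(\Omega)} = \|\mu^+\|_{\Mp(\Omega)} + \|\mu^-\|_{\Mp(\Omega)}$. The second inequality follows from the first by replacing $(z,f,\mu)$ with $(-z,-f,-\mu)$, so it suffices to prove
\begin{equation}
\int_{[z>0]} f \dd x \leq \|\mu^+\|_{\Mp(\Omega)}.
\end{equation}
The natural idea is to test the equation for $z$ against a function approximating $\chi_{[z>0]}$. Lemma~\ref{Lemma_B1} gives exactly a tool of this kind, but with $L^1$ data rather than measure data on the right-hand side, so a double approximation is needed.

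\textbf{Approximating the measure.} Write the Jordan decomposition $\mu = \mu^+ - \mu^-$. Mollify $\mu^+$ and $\mu^-$ separately (e.g.\ by convolution with a standard nonnegative kernel $\rho_n$, extending by zero outside $\Omega$ and restricting, after a small inward translation if needed) to obtain nonnegative functions $\mu_n^+, \mu_n^- \in L^\infty(\Omega)$ such that $\mu_n^\pm \rightharpoonup^* \mu^\pm$ weakly-$*$ in $\Mp(\Omega)$ and $\|\mu_n^\pm\|_{L^1(\Omega)} \leq \|\mu^\pm\|_{\Mp(\Omega)}$. Set $\mu_n := \mu_n^+ - \mu_n^- \in L^\infty(\Omega) \subset L^1(\Omega)$ and let $z_n$ be the unique solution (by Theorem~\ref{Theorem_uniqueness_of_linear_elliptic_problems_with_diffusion_and_measure_rhs}, which in this case actually lies in $H^1_0(\Omega) \subset \Mb_0$ by Lax--Milgram) of
\begin{equation}
\int_\Omega \bm{A} \nabla z_n \cdot \nabla v \dd x + \int_\Omega f v \dd x = \int_\Omega \mu_n v \dd x \quad \text{for all } v \in \Nb.
\end{equation}
By the Boccardo--Gallou\"et stability (Theorem~3 in \cite{Boccardo_Gallouet_1989}, applied to the difference problem with right-hand side $\mu_n - \mu \to 0$ in $\Mp$), $z_n \to z$ strongly in $W_0^{1,p}(\Omega)$ for every $p < \frac{d}{d-1}$, hence a.e.\ along a subsequence.

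\textbf{Approximating the Heaviside and combining.} For $\delta > 0$, let $H_\delta: \R \to [0,1]$ be nondecreasing, Lipschitz, with $H_\delta(t) = 0$ for $t \leq 0$, $H_\delta(t) = 1$ for $t \geq \delta$, and $H_\delta(t) \nearrow \chi_{(0,\infty)}(t)$ as $\delta \downarrow 0$. Applying Lemma~\ref{Lemma_B1} with the $L^1$ datum $\mu_n - f$ (and the same $\bm A$), with solution $z_n$, and with $p = H_\delta$ gives
\begin{equation}
0 \leq \int_\Omega (\mu_n - f)\, H_\delta(z_n) \dd x, \quad \text{i.e.,} \quad \int_\Omega f H_\delta(z_n) \dd x \leq \int_\Omega \mu_n H_\delta(z_n) \dd x.
\end{equation}
Since $0 \leq H_\delta \leq 1$ and $\mu_n^\pm \geq 0$,
\begin{equation}
\int_\Omega \mu_n H_\delta(z_n) \dd x \leq \int_\Omega \mu_n^+ H_\delta(z_n) \dd x \leq \|\mu_n^+\|_{L^1(\Omega)} \leq \|\mu^+\|_{\Mp(\Omega)}.
\end{equation}

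\textbf{Passing to the limit.} Since $z_n \to z$ a.e.\ and $H_\delta$ is continuous, $H_\delta(z_n) \to H_\delta(z)$ a.e.; dominated convergence (with bound $|f|$) yields $\int_\Omega f H_\delta(z) \dd x \leq \|\mu^+\|_{\Mp(\Omega)}$. Letting $\delta \downarrow 0$ and using $H_\delta(z) \to \chi_{[z>0]}$ pointwise together with another dominated convergence (again with dominant $|f| \in L^1$) delivers the desired estimate. The main delicate point in the argument is the first step: one must handle the positive and negative parts of $\mu$ separately in the mollification, as only this guarantees $\|\mu_n^+\|_{L^1} \leq \|\mu^+\|_{\Mp}$; a direct mollification of the signed measure $\mu$ could let mass migrate between positive and negative parts and would only control $\|\mu_n\|_{L^1} \leq \|\mu\|_{\Mp}$, which is insufficient to separate the two half-inequalities in \eqref{PropB3_auxiliary_inequalities}.
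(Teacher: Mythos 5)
Your proposal follows the paper's own strategy almost step for step: mollify $\mu^+$ and $\mu^-$ \emph{separately} (which you correctly identify as the crucial point), solve the resulting $L^\infty$-data problems, apply Lemma~\ref{Lemma_B1} with a nondecreasing bounded Lipschitz function taking values in $[0,1]$, and then pass to the limit twice — first in $n$, then in the approximation of the Heaviside function. The reduction of \eqref{PropB3_main_result} to \eqref{PropB3_auxiliary_inequalities} and of the second inequality to the first by the substitution $(z,f,\mu)\mapsto(-z,-f,-\mu)$ is also exactly the paper's.

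There is, however, a flaw in the justification of $z_n\to z$. You claim that the Boccardo--Gallou\"et stability result applies to the difference problem with right-hand side ``$\mu_n-\mu\to 0$ in $\Mp$.'' This is false in general: mollifications of a measure converge only weak-$*$, not in total variation. For instance, if $\mu$ charges a point then $\rho_n*\mu$ is absolutely continuous and hence mutually singular with the atomic part of $\mu$, so $\|\rho_n*\mu-\mu\|_{\Mp(\Omega)}$ does not tend to zero. The claimed strong $W^{1,p}$ convergence therefore does not follow from the argument as you give it. What one actually has — and what the paper invokes, by referring back to the proof of Lemma~\ref{Lemma_B1} and through it to the proof of Theorem~1 of \cite{Boccardo_Gallouet_1989} — is a uniform bound on $\{z_n\}$ in $W_0^{1,p}(\Omega)$ for each fixed $p<\tfrac{d}{d-1}$, coming from the uniform bound on $\|\mu_n\|_{L^1(\Omega)}+\|f\|_{L^1(\Omega)}$. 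One then extracts a subsequence converging weakly in $W_0^{1,p}(\Omega)$, passes to the limit in the weak formulation against each fixed $v\in\Nb\hookrightarrow C_0(\overline\Omega)$ using the weak-$*$ convergence $\mu_n\stackrel{*}{\rightharpoonup}\mu$ and the weak convergence of $\nabla z_n$, identifies the limit with $z$ by the uniqueness in Theorem~\ref{Theorem_uniqueness_of_linear_elliptic_problems_with_diffusion_and_measure_rhs}, and finally gets a.e.\ convergence along a further subsequence via the compact embedding into $L^p(\Omega)$. That is enough for your two dominated-convergence passages, and with this repair the remainder of the argument is sound.
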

\begin{proof}
Since problem \eqref{linear_problem_with_L1_term_and_measure_rhs} is linear, it suffices to prove only the first inequality in \eqref{PropB3_auxiliary_inequalities}.  Let $\left\{\mu_n^+\right\}$, $\left\{\mu_n^-\right\}$ be sequences in $L^\infty(\Omega)$ such that $\mu_n^+(x),\,\mu_n^-(x)\ge 0$ a.e., $\mu_n^+\stackrel{\ast}{\rightharpoonup}\mu^+$, $\mu_n^-\stackrel{\ast}{\rightharpoonup}\mu^-$, and $\|\mu_n^+\|_{L^1(\Omega)}\leq \|\mu^+\|_{\Mp(\Omega)}$, $\|\mu_n^-\|_{L^1(\Omega)}\leq \|\mu^-\|_{\Mp(\Omega)}$ ($\mu_n^+$ and $\mu_n^-$ can even be chosen in $C_c^\infty(\Omega)$, see e.g. Problem 24 in \cite{Brezis_FA}). Let $z_n$ denote the solution, unique by Theorem~\ref{Theorem_uniqueness_of_linear_elliptic_problems_with_diffusion_and_measure_rhs}, of \eqref{linear_problem_with_L1_term_and_measure_rhs} with $\mu$ replaced by $\mu_n:=\mu_n^+-\mu_n^-$, i.e., $z_n\in \bigcap_{p<\frac{d}{d-1}}W_0^{1,p}(\Omega)$ satisfies
\begin{equation} \label{linear_problem_with_L1_term_and_regularized_rhs}
\int_{\Omega}{\bm A\nabla z_n\cdot\nabla v \dd x} +\int_{\Omega}{fv\dd x}=\int_{\Omega}{\mu_n v\dd x}\quad \text{ for all } v\in \bigcup_{q>d}{W_0^{1,q}(\Omega)}.
\end{equation} 
 If $p:\mathbb R\to\mathbb R$ is a nondecreasing bounded Lipschitz continuous function satisfying $p(0)=0$, then by Lemma~\ref{Lemma_B1} we have
\begin{equation} 
\int_{\Omega}{(\mu_n-f)p(z_n)\dd x}\ge 0.
\end{equation} 
If we further assume that $0\leq p(t)\leq 1$ for all $t\in\mathbb R$, then by using the facts that $\mu_n^+(x),\,\mu_n^-(x)\ge 0$ a.e. and $\|\mu_n^+\|_{L^1(\Omega)}\leq \|\mu^+\|_{\Mp(\Omega)}$ we obtain
\begin{equation} \label{intermediate_result_01_in_PropB3}
\begin{aligned}
\int_{\Omega}{fp(z_n)\dd x}&\leq\int_{\Omega}{\mu_np(z_n)\dd x}=\int_{\Omega}{\mu_n^+p(z_n)\dd x}-\int_{\Omega}{\mu_n^-p(z_n)\dd x}\\
&\leq \int_{\Omega}{\mu_n^+p(z_n)\dd x}\leq \|\mu^+\|_{\Mp(\Omega)}.
\end{aligned}
\end{equation} 
Our goal now is to pass to the limit in \eqref{intermediate_result_01_in_PropB3}. First, observe that for all $ v\in C_0(\overline\Omega)$ we have
\begin{equation*}
\int_{\Omega}{\big(\mu_n^+-\mu_n^-\big)\,v\dd x}=\int_{\Omega}{\mu_n^+v\dd x}-\int_{\Omega}{\mu_n^-v\dd x}\to \langle\mu^+,v\rangle-\langle \mu^-,v\rangle=\langle\mu,v\rangle
\end{equation*}
and 
\begin{equation*}
\|\mu_n\|_{L^1(\Omega)}\leq \|\mu_n^+\|_{L^1(\Omega)}+\|\mu_n^-\|_{L^1(\Omega)}\leq \|\mu^+\|_{\Mp(\Omega)}+\|\mu^-\|_{\Mp(\Omega)}=\|\mu\|_{\Mp(\Omega)}.
\end{equation*}
Therefore, as in the proof of the previous lemma and up to another subsequence again denoted $\{z_n\}$, we have $z_n\to z$ pointwise almost everywhere in $\Omega$. Since we also have $\abs{fp(z_n)}\leq \abs{f}\in L^1(\Omega)$, by dominated convergence, from \eqref{intermediate_result_01_in_PropB3} we obtain 
\begin{equation} \label{intermediate_result_02_in_PropB3}
\int_{\Omega}{fp(z)\dd x}\leq \|\mu^+\|_{\Mp(\Omega)}.
\end{equation} 
Now, we apply \eqref{intermediate_result_02_in_PropB3} to a sequence of nondecreasing Lipschitz continuous functions $\{p_n\}$ such that $p_n(s)=0$ for $s\leq 0$ and $p_n(s)=1$ for $s\ge\frac{1}{n}$. As $n\to\infty$, again by dominated convergence we obtain the first inequality in \eqref{PropB3_auxiliary_inequalities}. By changing $f$ with $-f$ and $\mu$ with $-\mu$ in \eqref{linear_problem_with_L1_term_and_measure_rhs} and then applying the first inequality in \eqref{PropB3_auxiliary_inequalities} we easily obtain the second one. Finally, summing up both of these inequalities gives \eqref{PropB3_main_result}.
\end{proof}

In the next theorem we show that if we additionally impose the condition $b(x,\phi)\in L^1(\Omega)$ in Definition~\ref{definition_weak_formulation_full_GPBE_dimensionless}, then one can show that there is only one such $\phi$ that satisfies~\eqref{weak_formulation_General_PBE_W1p_spaces}. 
\begin{theorem}[Uniqueness of the weak solution of the GPBE]\label{Theorem_uniqueness_for_GPBE}
Under Assumption~\ref{Assumption_Domain_permittivity}, there can only be one solution $\phi$ to problem \eqref{weak_formulation_General_PBE_W1p_spaces} (where $b$ is defined in \eqref{definition_b_GPBE}) such that $b(x,\phi)\in L^1(\Omega)$.
\end{theorem}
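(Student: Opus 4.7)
The plan is to adapt the classical uniqueness argument for semilinear elliptic problems with measure data due to Brezis--Marcus--Ponce, already prepared via Lemma~\ref{Lemma_B1} and Proposition~\ref{Proposition_B3}. Let $\phi_1,\phi_2$ be two solutions of \eqref{weak_formulation_General_PBE_W1p_spaces} satisfying $b(x,\phi_i)\in L^1(\Omega)$, and set $w:=\phi_1-\phi_2\in \Mb_0$ and $f:=b(x,\phi_1)-b(x,\phi_2)\in L^1(\Omega)$. Subtracting the two weak formulations (both tested by the same $v\in\Nb$) yields
\[
\int_\Omega \epsilon\,\nabla w\cdot\nabla v \dd x + \int_\Omega f v \dd x = 0 \quad\text{for all } v\in \Nb.
\]
Under Assumption~\ref{Assumption_Domain_permittivity}, $\epsilon$ is constant on $\Omega_m$ and Lipschitz on $\overline{\Omega_s}$ with $\Gamma\in C^1$, so the hypotheses of Proposition~\ref{Proposition_B3} are fulfilled with $\bm A = \epsilon\, \bm I$ and $\mu=0$; applying it gives
\[
\int_\Omega f\,{\rm sgn}(w)\dd x \leq 0.
\]

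On the other hand, since $t\mapsto b(x,t)$ is nondecreasing for every $x\in\Omega$, one has the pointwise bound $f(x)\,{\rm sgn}(w(x))\ge 0$ almost everywhere in $\Omega$. Combined with the integral inequality above, this forces $f\,{\rm sgn}(w)= 0$ a.e.\ in $\Omega$. To leverage this, split $\Omega$ into its natural pieces. On $\Omega_m\cup\Omega_{IEL}$ the nonlinearity $b$ vanishes identically by \eqref{definition_b_GPBE}, so $f\equiv 0$ there automatically. On $\Omega_{ions}$, direct differentiation of \eqref{definition_b_GPBE} yields $\partial_t b(x,t) = \tfrac{4\pi e_0^2}{k_BT}\sum_j M_j\,\xi_j^2\, \me^{-\xi_j t} > 0$, i.e.\ $b(x,\cdot)$ is \emph{strictly} increasing, so the only way the identity $f(x)\,{\rm sgn}(w(x))=0$ can hold is by $w(x)=0$ (equivalently $f(x)=0$) at almost every point of $\Omega_{ions}$. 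Consequently $f\equiv 0$ a.e.\ in $\Omega$, and $w\in \Mb_0$ solves
\[
\int_\Omega \epsilon\,\nabla w\cdot\nabla v\dd x = 0 \quad\text{for all } v\in \Nb.
\]
A final invocation of the uniqueness part of Theorem~\ref{Theorem_uniqueness_of_linear_elliptic_problems_with_diffusion_and_measure_rhs} (with $c=0$, $\mu=0$, $g=0$) then gives $w=0$, proving $\phi_1=\phi_2$.

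The only delicate point is justifying the use of Proposition~\ref{Proposition_B3} in our anisotropic setting, which in turn relies on the uniqueness part of Theorem~\ref{Theorem_uniqueness_of_linear_elliptic_problems_with_diffusion_and_measure_rhs}; this is precisely where the $C^1$ regularity of $\Gamma$ is consumed, via the optimal elliptic transmission regularity it enables. Once the sign/monotonicity machinery is in place, the crux of the argument is the structural observation (emphasized throughout the paper) that $b$ is supported strictly away from the point charges, and is \emph{strictly} monotone on the only region where it is not identically zero, which together let us convert the integral sign identity into a pointwise vanishing of $w$ on $\Omega_{ions}$ and then propagate it to all of $\Omega$ through the linear uniqueness theorem.
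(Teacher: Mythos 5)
Your proof is correct and follows essentially the same route as the paper: subtract the two weak formulations, apply Proposition~\ref{Proposition_B3} with $\mu=0$ to get $\int_\Omega f\,{\rm sgn}(w)\dd x\le 0$, combine with monotonicity of $b(x,\cdot)$ and its strict monotonicity on $\Omega_{ions}$ to conclude $f\equiv 0$, then invoke the linear uniqueness result of Theorem~\ref{Theorem_uniqueness_of_linear_elliptic_problems_with_diffusion_and_measure_rhs} on the remaining linear problem.
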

\begin{proof}
Let $\phi_1,\phi_2\in \Mb_{g_\Omega}$ be two solutions of \eqref{weak_formulation_General_PBE_W1p_spaces} such that $b(x,\phi_1),\,b(x,\phi_2)\in L^1(\Omega)$.  Subtracting the corresponding weak formulations for $\phi_1$ and $\phi_2$ we get $\phi_1-\phi_2\in \Mb_0$ and 
\begin{equation} \label{difference_weak_formulations_General_PBE_W1p_spaces}
\int_{\Omega}{\epsilon\nabla (\phi_1-\phi_2)\cdot\nabla v \dd x}+\int_{\Omega}{\left(b(x,\phi_1)-b(x,\phi_2)\right)v \dd x}=0 \, \text{ for all } \,v\in \Nb.
\end{equation} 
By applying Proposition~\ref{Proposition_B3} with $f=b(x,\phi_1)-b(x,\phi_2)\in L^1(\Omega)$  and $\mu=0$ we obtain
\begin{equation} \label{application_of_PropB3}
\int_{\Omega}{\left(b(x,\phi_1)-b(x,\phi_2)\right)\,{\rm sgn}(\phi_1-\phi_2)\dd x}\leq 0.
\end{equation} 
Recalling the definition of $b(x,\cdot)$ in \eqref{definition_b_GPBE} we have $b(x,t)=0$ for all $x\in \Omega \setminus \Omega_{ions}$, and $b(x,\cdot)$ strictly increasing whenever $x\in \Omega_{ions}$, with $\Omega = \Omega_m \cup \Gamma \cup \Omega_{IEL} \cup \Omega_{ions}$, where the union is pairwise disjoint and $\Gamma = \partial \Omega_m$. Taking this into account,  \eqref{application_of_PropB3} implies
\begin{equation} \label{intermediate_result_01_Theorem_uniqueness_GPBE}
\left(b(x,\phi_1)-b(x,\phi_2)\right)\,{\rm sgn}(\phi_1-\phi_2)=0 \text{ a.e. } x\in\Omega,
\end{equation} 
which in turn gives $\phi_1(x)=\phi_2(x)$ for a.e. $x\in\Omega_{ions}$, but provides no information on $\Omega \setminus \Omega_{ions}$. 

To see that $\phi_1=\phi_2$ a.e. in the whole domain $\Omega$, note that the second integral on the left hand side of \eqref{difference_weak_formulations_General_PBE_W1p_spaces} is zero. This allows us to apply Theorem~\ref{Theorem_uniqueness_of_linear_elliptic_problems_with_diffusion_and_measure_rhs} to the resulting linear problem on the complete $\Omega$ and conclude that it has a unique solution. Moreover, it clearly  admits the trivial solution as well, so $\phi_1 - \phi_2=0$.
\end{proof}

We notice that in the proof of this theorem, only two features of the nonlinearity have been used: for $x \in \Omega \setminus \overline{\Omega_{ions}}$ we have that $b(x, \cdot)\equiv 0$, and for $x \in \Omega_{ions}$ we have that $b(x, s)$ is strictly monotone in $s$. This kind of behaviour allows us to infer uniqueness in the semilinear problem from the linear one, also for more general coefficient matrices $\bm A\in \left[L^\infty(\Omega)\right]^{d\times d}$. In particular, we get the following:

\begin{corollary}Let $\Omega\subset \mathbb R^d$ with $d\in \{2, 3\}$ be a bounded domain with Lipschitz boundary, and $\Omega_0\subset\Omega$ a subdomain with $C^1$ boundary and $\dist(\Omega_0, \, \partial \Omega)>0$. Let $\bm A$ be a $d\times d$ symmetric matrix valued function on $\Omega$ satisfying the uniform ellipticity condition \eqref{uniform_ellipticity_of_A} and which is uniformly continuous on both $\Omega_0$ and $\Omega\setminus \overline{\Omega_0}$. Assume further that $\Omega_1$ is a measurable subset of $\Omega$ and that $b:\Omega\times\R\to\R\cup\{+\infty\}$ is a Carath\'eodory function such that $t \mapsto b(x,t)$ is strictly monotone for almost every $x\in\Omega_1$ and vanishes identically for almost every $x\in \Omega\setminus\Omega_1$. Then for $g \in W^{1-1/p, p}(\partial \Omega)$ with $p=\frac{d}{d-1}$ and $\mu \in \Mp(\Omega)$ the problem
\begin{equation}\label{weak_formulation_general_semilinear_measure_RHS}
\begin{aligned}
&\text{Find }z \in \bigcap_{p<\frac{d}{d-1}}{W_{g}^{1,p}(\Omega)}\,\text{ with }\, {b(x,z)\in L^1(\Omega)}\, \text{ such that }\\
&\int_{\Omega}{\bm A \nabla z\cdot\nabla v \dd x}+\int_{\Omega}{b(x,z)v \dd x}=\int_{\Omega}{v \dd\mu} \, \text{ for all } \,v\in \bigcup_{q>d}{W_0^{1,q}(\Omega)}
\end{aligned}
\end{equation} 
has at most one solution.
\end{corollary}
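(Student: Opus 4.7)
My plan is to mirror the argument of Theorem~\ref{Theorem_uniqueness_for_GPBE} essentially verbatim, since the hypotheses of the corollary have been crafted to axiomatize exactly the two properties of $b$ that were actually used there: vanishing outside a prescribed set and strict monotonicity inside it. Let $z_1, z_2$ be two solutions of \eqref{weak_formulation_general_semilinear_measure_RHS}. Set $w := z_1 - z_2$, which lies in $\bigcap_{p<d/(d-1)} W_0^{1,p}(\Omega)$ because both solutions share the same trace $g$. Subtracting the weak formulations gives
\[
\int_\Omega \bm A \nabla w \cdot \nabla v \, dx + \int_\Omega f \, v \, dx = 0 \qquad \text{for all } v \in \bigcup_{q>d} W_0^{1,q}(\Omega),
\]
where $f := b(x, z_1) - b(x, z_2) \in L^1(\Omega)$ by hypothesis.

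The next step is to apply Proposition~\ref{Proposition_B3} with this $f$ and with $\mu = 0$. Its use is legitimate because $\bm A$ satisfies the assumptions of Theorem~\ref{Theorem_uniqueness_of_linear_elliptic_problems_with_diffusion_and_measure_rhs}: it is piecewise uniformly continuous with respect to the $C^1$ interface $\partial \Omega_0$ compactly contained in $\Omega$, and the bilinear form is uniformly elliptic. Moreover $w$ is the unique solution of the above linear problem by Theorem~\ref{Theorem_uniqueness_of_linear_elliptic_problems_with_diffusion_and_measure_rhs}. The conclusion of Proposition~\ref{Proposition_B3} is
\[
\int_\Omega f \cdot \mathrm{sgn}(w) \, dx \le 0.
\]
Now I exploit the structural properties of $b$: on $\Omega \setminus \Omega_1$ the integrand vanishes identically since $b(x, \cdot) \equiv 0$, whereas on $\Omega_1$ the strict monotonicity of $t \mapsto b(x, t)$ forces $(b(x, z_1) - b(x, z_2)) \, \mathrm{sgn}(z_1 - z_2) \ge 0$ pointwise, with strict inequality precisely where $z_1(x) \ne z_2(x)$. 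Hence the integrand is nonnegative a.e.\ on $\Omega$, so the displayed inequality compels it to vanish a.e. In particular $z_1 = z_2$ a.e.\ on $\Omega_1$, and consequently $f = 0$ a.e.\ on $\Omega$ (on $\Omega \setminus \Omega_1$ both $b(x,z_1)$ and $b(x,z_2)$ vanish, while on $\Omega_1$ they coincide because $z_1 = z_2$).

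With $f = 0$, the function $w$ satisfies the purely linear, homogeneous problem
\[
\int_\Omega \bm A \nabla w \cdot \nabla v \, dx = 0 \qquad \text{for all } v \in \bigcup_{q>d} W_0^{1,q}(\Omega),
\]
with zero trace. A second invocation of Theorem~\ref{Theorem_uniqueness_of_linear_elliptic_problems_with_diffusion_and_measure_rhs} (with $\mu = 0$, $c = 0$, $g = 0$) then yields $w = 0$, i.e.\ $z_1 = z_2$.

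I do not expect any substantive obstacle in executing this plan. The only conceptual point worth emphasizing is that the first application of Proposition~\ref{Proposition_B3} and the structural hypotheses on $b$ together yield the a.e.\ equality of the two solutions only on the (possibly strict) subset $\Omega_1$; a second application of the linear uniqueness theorem is genuinely needed to propagate the equality to all of $\Omega$. This two-step structure is precisely what allows the argument to bypass any Lipschitz or growth condition on $b$, relying solely on the $L^1$ regularity built into the definition of solution.
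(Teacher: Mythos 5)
Your proposal is correct and essentially reproduces the paper's intended proof: the corollary is stated in the paper immediately after Theorem~\ref{Theorem_uniqueness_for_GPBE} with no separate argument, precisely because the proof of that theorem uses only the two structural features of $b$ that you have isolated (vanishing outside a fixed set, strict monotonicity inside it), and you have carried that argument over verbatim with $\Omega_1$ replacing $\Omega_{ions}$ and $\Omega_0$ replacing $\Omega_m$. The two-step structure you emphasize --- Proposition~\ref{Proposition_B3} to kill the nonlinear term on $\Omega_1$, then the linear uniqueness Theorem~\ref{Theorem_uniqueness_of_linear_elliptic_problems_with_diffusion_and_measure_rhs} to propagate the conclusion to all of $\Omega$ --- is exactly the structure of the paper's proof, and your verification that the hypotheses on $\Omega$, $\Omega_0$, $\bm A$ match those required by Theorem~\ref{Theorem_uniqueness_of_linear_elliptic_problems_with_diffusion_and_measure_rhs} and Proposition~\ref{Proposition_B3} is appropriate. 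One small remark: the argument tacitly reads ``strictly monotone'' as ``strictly increasing'' (as does the paper), since for a strictly decreasing $b(x,\cdot)$ the product $(b(x,z_1)-b(x,z_2))\,\mathrm{sgn}(z_1-z_2)$ would be nonpositive and the inequality from Proposition~\ref{Proposition_B3} would yield nothing; this is an imprecision in the corollary's statement rather than a gap in your proof.
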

%More generally, from the proofs of Lemma~\ref{Lemma_B1}, Proposition~\ref{Proposition_B3} and Theorem~\ref{Theorem_uniqueness_for_GPBE}, it is clear that if $\bm A\in \left[L^\infty(\Omega)\right]^{d\times d}$ is an arbitrary symmetric matrix valued function satisfying the uniform ellipticity condition \eqref{uniform_ellipticity_of_A}, then the semilinear problem \eqref{weak_formulation_general_semilinear_measure_RHS} has at most one solution whenever the linear problem \eqref{General_linear_problem_with_measure_RHS} has a unique solution.

To conclude, we reiterate that having such a uniqueness result for the weak formulation \eqref{weak_formulation_General_PBE_W1p_spaces} ensures that both the 2-term and 3-term splittings lead to the same full potential $\phi$, as would any other decomposition compatible with this natural notion of weak solution for PDE with measure data.

\section*{Acknowledgments}
The second author is grateful for the financial support received from the Austrian Science Fund (FWF) through the Doctorate College program ``Nano-Analytics of Cellular Systems (NanoCell)'' with grant number W1250 and project P 33154-B, and from Johannes Kepler University in conjunction with the State of Upper Austria through projects LIT-2017-4-SEE-004 and LIT-2019-8-SEE-120. The first author is partially supported by the State of Upper Austria. We would like to thank Hannes Meinlschmidt for helpful comments on a preliminary version of this manuscript.
%%%%%%%%%%%%%%%%%%%%%%%%%%%%%%%%%%%%%%%%%%%%%%%%%%%%%%%%%%
\appendix
\section{Auxiliary results}\label{Auxiliary_results}
\renewcommand*{\thesection}{\Alph{section}}
A key element in the proof of classical boundedness results for solutions for elliptic problems such as those of \cite{Stampacchia_1965} (see also Theorem B.2 in \cite{Kinderlehrer_Stampacchia}) is the following `extinction' lemma, which we also use for our $L^\infty$ estimate in Theorem \ref{Thm_Boundedness_of_solution_to_general_semilinear_elliptic_problem}.
\begin{lemma}[Lemma B.1 in \cite{Kinderlehrer_Stampacchia}]\label{Lemma_B1_Kinderlehrer_Stampacchia}
Let $\Theta(t)$ denote a function which is nonnegative and nonincreasing for $k_0\leq t<\infty$. Further, assume that
\begin{equation} \label{Lemma_B1_main_inequality}
\Theta(t)\leq C\frac{\Theta(k)^\beta}{(t-k)^\alpha},\,\mathrm{for}\ \mathrm{all}\ t>k>k_0,
\end{equation} 
where $C$ and $\alpha$ are positive constants and $\beta>1$.
If $t_e \in  \mathbb R$ is defined by $t_e^\alpha:=C \Theta(k_0)^{\beta-1}2^{\frac{\alpha\beta}{\beta-1}}$, then $\Theta(k_0+t_e)=0$.
\end{lemma}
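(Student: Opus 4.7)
\textbf{Proof plan for Lemma~\ref{Lemma_B1_Kinderlehrer_Stampacchia}.} My plan is the classical Stampacchia geometric-iteration argument. I will construct a strictly increasing sequence $\{k_n\}_{n\ge 0}\subset [k_0, k_0+t_e)$ and show by induction that $\Theta(k_n)$ decays geometrically fast enough that, combined with the monotonicity of $\Theta$, the limit $\Theta(k_0+t_e)$ must vanish.

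Concretely, set $k_n := k_0 + t_e(1-2^{-n})$, so that $k_0$ is recovered at $n=0$, the sequence is strictly increasing, and $k_n\nearrow k_0+t_e$. Note that $k_{n+1}-k_n = t_e\,2^{-(n+1)}$. Let $\mu := \alpha/(\beta-1)$, which is positive because $\alpha>0$ and $\beta>1$. I will prove by induction the estimate
\begin{equation}\label{plan_induction}
\Theta(k_n)\le \Theta(k_0)\,2^{-n\mu}\qquad\text{for all }n\ge 0.
\end{equation}
The base case $n=0$ is trivial. For the inductive step, assume \eqref{plan_induction} at level $n$ and apply the hypothesis \eqref{Lemma_B1_main_inequality} with $t=k_{n+1}$ and $k=k_n$:
\begin{equation}
\Theta(k_{n+1}) \le C\,\frac{\Theta(k_n)^\beta}{(k_{n+1}-k_n)^\alpha}
= \frac{C\,2^{\alpha(n+1)}}{t_e^\alpha}\Theta(k_n)^\beta
\le \frac{C\,\Theta(k_0)^{\beta-1}}{t_e^\alpha}\,2^{\alpha(n+1)}\,2^{-n\mu\beta}\,\Theta(k_0).
\end{equation}
The choice $t_e^\alpha = C\,\Theta(k_0)^{\beta-1}\,2^{\alpha\beta/(\beta-1)}$ makes $C\Theta(k_0)^{\beta-1}/t_e^\alpha = 2^{-\alpha\beta/(\beta-1)}$, and since $\mu\beta = \mu+\alpha$ by definition of $\mu$, the exponent of $2$ collapses to $\alpha(n+1)-n\mu\beta-\tfrac{\alpha\beta}{\beta-1} = -(n+1)\mu$. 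Thus $\Theta(k_{n+1})\le \Theta(k_0)\,2^{-(n+1)\mu}$, closing the induction.

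Taking $n\to\infty$ in \eqref{plan_induction} gives $\Theta(k_n)\to 0$. Because $\Theta$ is nonincreasing and $k_n < k_0+t_e$, we have $0\le \Theta(k_0+t_e)\le \Theta(k_n)\to 0$, forcing $\Theta(k_0+t_e)=0$. The one cosmetic subtlety is that the hypothesis is stated for $t>k>k_0$ (strict), whereas the inductive step at $n=0$ uses $k=k_0$; this is harmless since either one interprets the hypothesis as holding for $k\ge k_0$ (the standard convention for Stampacchia's lemma), or one applies it with $k=k_0+\delta$ and lets $\delta\to 0^+$ using monotonicity of $\Theta$. The main ``obstacle'', which is really just bookkeeping, is making sure the exponent $\mu=\alpha/(\beta-1)$ and the constant $2^{\alpha\beta/(\beta-1)}$ match exactly the definition of $t_e$; the calculation above is engineered to produce precisely these values.
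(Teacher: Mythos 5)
The paper does not prove this lemma; it is stated as a cited auxiliary result from Kinderlehrer--Stampacchia. Your argument is the standard Stampacchia geometric iteration with $k_n = k_0 + t_e(1-2^{-n})$ and the induction $\Theta(k_n)\le \Theta(k_0)2^{-n\alpha/(\beta-1)}$, which is exactly the proof in that reference; the algebra checks out, and your remark on the $k=k_0$ base step (versus the strict $k>k_0$ hypothesis) correctly identifies and dispatches the only cosmetic wrinkle.
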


The following Theorem of optimal regularity of linear elliptic interface problems is central to many of our arguments, since it applies to the realistic PBE situation with $d=3$:
\begin{theorem}[Optimal regularity of elliptic interface problems, Theorem 1.1 in~\cite{Optimal_regularity_for_elliptic_transmission_problems}]\label{Thm_Optimal_Regularity_for_Elliptic_Interface_Problems} %\cite{W1q_Regularity_Elliptic_Transmission_Problems_on_Polyhedra_2007}]
Assume that $\Omega\subset \mathbb R^d$ is a bounded Lipschitz domain and let $\Omega_0\subset\Omega$ be another domain with a $C^1$ boundary, which does not touch the boundary of $\Omega$. Let $\bm\mu$ be a function on $\Omega$ with values in the set of real, symmetric $d\times d$ matrices which is uniformly continuous on both $\Omega_0$ and $\Omega\setminus \overline{\Omega_0}$. Additionally, $\bm\mu$ is supposed to satisfy the usual ellipticity condition 
\begin{equation*}
\underline{c}\abs{\xi}^2\leq \bm\mu(x)\xi\cdot \xi\ \text{ for some }\underline{c}>0,\text{ all } \xi=(\xi_1,\ldots,\xi_d)\in\mathbb R^d\text{ and }a.e.\, x\in \Omega.
\end{equation*}
Then there is a $p>3$ such that for every $\lambda\ge 0$,
\begin{equation*}
-\nabla\cdot \bm\mu\nabla + \lambda: W_0^{1,q}(\Omega)\to W^{-1,q}(\Omega)
\end{equation*}
is a topological isomorphism for all $q\in (p',p)$ with $p'$ being the H{\"o}lder conjugate of $p$. If $\Omega$ itself is also a $C^1$ domain, then $p$ may be taken as $+\infty$.
\end{theorem}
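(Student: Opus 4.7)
\section*{Proof proposal}

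The plan is to combine three types of local isomorphism results, patched together by a partition of unity argument, and then to exploit Šne\u ćko--Gehring reverse Hölder perturbation (Meyers' technique) to obtain an open interval $(p',p)$ of exponents with $p>3$.

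\textbf{Starting point ($q=2$).} For each $\lambda\ge 0$ the operator $-\nabla\cdot\bm\mu\nabla+\lambda$ is a continuous, symmetric and, by the ellipticity condition, coercive bilinear form on $H_0^1(\Omega)$; hence the Lax--Milgram theorem gives the isomorphism property in the self-dual case $q=2$. The job is then to extend this to an open interval of exponents around $2$ whose upper endpoint exceeds $3$.

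\textbf{Local models.} I would distinguish three regions and prove a local $W^{1,q}$ isomorphism for each:
\begin{enumerate}
\item \emph{Interior of the two phases.} On any ball $B\Subset \Omega_0$ or $B\Subset\Omega\setminus\overline{\Omega_0}$ the matrix $\bm\mu$ is uniformly continuous, so the classical Calderón--Zygmund / Agmon--Douglis--Nirenberg $L^p$-theory gives local $W^{2,p}$ (hence $W^{1,p}$) estimates for every $p\in(1,\infty)$.
\item \emph{Across the $C^1$ interface $\partial\Omega_0$.} Locally flatten $\partial\Omega_0$ by a $C^1$ chart; in the flattened coordinates the coefficients are uniformly continuous on each side of a flat hyperplane. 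Freezing the coefficients at a boundary point yields a two-phase constant coefficient transmission problem across a hyperplane, which can be solved explicitly (by reflection and partial Fourier transform) and whose solution operator is bounded $W^{-1,q}\to W^{1,q}_{\mathrm{loc}}$ for every $q\in(1,\infty)$. A standard commutator/perturbation argument, using the uniform continuity of the coefficients and the $C^1$ smoothness of the interface (which controls the pullback of $\bm\mu$ as a small $BMO$ or $C^0$ perturbation of the frozen matrix), transfers the estimate to the variable coefficient transmission problem locally.
\item \emph{At the Lipschitz outer boundary $\partial\Omega$.} Here $\bm\mu$ is uniformly continuous up to $\partial\Omega$ (since $\partial\Omega_0\cap\partial\Omega=\emptyset$). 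For Lipschitz domains in dimensions $d\in\{2,3\}$ with continuous coefficients one has a Jerison--Kenig / Gröger type isomorphism $-\nabla\cdot\bm\mu\nabla:W_0^{1,q}(\Omega)\to W^{-1,q}(\Omega)$ in an open range $(p_0',p_0)$ around $2$, with $p_0>3$ when $d\le 3$. If $\Omega$ itself is $C^1$, Calderón--Zygmund gives the full range $(1,\infty)$.
\end{enumerate}

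\textbf{Globalization and Meyers perturbation.} Choose a finite open cover of $\overline\Omega$ by sets of the three types above and a subordinate smooth partition of unity $\{\chi_i\}$. For $u\in H_0^1(\Omega)$ with $(-\nabla\cdot\bm\mu\nabla+\lambda)u=f\in W^{-1,q}(\Omega)$, the function $\chi_i u$ solves a local problem of the corresponding model type with right-hand side in $W^{-1,q}$ plus commutator terms of the form $\bm\mu\nabla\chi_i\cdot\nabla u$ and $u\nabla\cdot(\bm\mu\nabla\chi_i)$, both controlled by $\|u\|_{W^{1,s}}$ for suitable $s<q$ obtained from the preceding step in a bootstrap. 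Combining the local estimates gives a global a priori bound
\[
\|u\|_{W_0^{1,q}(\Omega)}\le C\bigl(\|f\|_{W^{-1,q}(\Omega)}+\|u\|_{L^2(\Omega)}\bigr)
\]
for $q$ in a neighborhood of $2$. A Šne\u ćko--Gehring reverse Hölder / Meyers argument then opens up an interval $(p',p)$ containing $2$ on which the a priori bound, together with density and the $L^2$ isomorphism, upgrades to the isomorphism property. The lower endpoint $p'$ of the isomorphism interval for the primal problem coincides with the Hölder dual of the upper endpoint $p$ of the isomorphism interval for the adjoint (which has the same structure by symmetry of $\bm\mu$), so establishing $p>3$ on the upper side automatically yields $q>p'$ on the lower side. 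The shift for $\lambda>0$ follows at once since adding a nonnegative multiplication operator strictly improves coercivity and preserves all local estimates uniformly in $\lambda\ge 0$.

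\textbf{Main obstacle.} The hard step is (2): proving the local isomorphism across the interface with exponent $q>3$. The constant coefficient two-phase problem on a half-space is straightforward (explicit Poisson-type kernels), but transferring the estimate to variable continuous coefficients on a $C^1$ interface requires controlling the commutator between $\bm\mu$ and a freezing/flattening, and the argument is sharp: one can lose only a small amount of regularity under the $C^1$ (as opposed to $C^{1,\alpha}$) assumption, which is exactly why the theorem only claims \emph{some} $p>3$ rather than an explicit exponent. The careful balance between the regularity of the interface, the continuity modulus of $\bm\mu$, and the Lipschitz character of $\partial\Omega$ is what makes the result delicate and why we cite it rather than reprove it.
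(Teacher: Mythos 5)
This statement is not proved in the paper at all: it is quoted verbatim (as Theorem~1.1 of \cite{Optimal_regularity_for_elliptic_transmission_problems}) and used as a black box in the appendix, so there is no internal proof to compare yours against. Judged on its own, your sketch does follow the broad strategy of the cited work (Lax--Milgram at $q=2$, localization into interior, interface and outer-boundary charts, flat constant-coefficient transmission model problems, perturbation and globalization), so as a roadmap it is sensible.

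However, as a proof it has a genuine gap precisely at the step that carries the whole content of the theorem: the exponent $p>3$. Neither of the two mechanisms you invoke for enlarging the range around $2$ delivers it. Gr\"oger-type results for Lipschitz domains give only \emph{some} $p>2$, and a Meyers/Gehring reverse-H\"older perturbation likewise yields only a small, non-quantified window around $2$; so your ``Globalization and Meyers perturbation'' paragraph cannot produce $3$. The exponent $3$ comes from the Jerison--Kenig theory for the Dirichlet problem on Lipschitz domains, which you cite only in passing for the \emph{constant-coefficient} Laplacian; the real work is to show that this range with upper endpoint above $3$ survives (a) the passage to merely uniformly continuous coefficients up to the Lipschitz boundary, (b) the $C^1$ (not $C^{1,\alpha}$) flattening of the interface, and (c) your partition-of-unity bootstrap with its commutator terms, without the endpoint collapsing back towards $2$. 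That transfer is asserted, not argued, in your step (3), and it is exactly what \cite{Optimal_regularity_for_elliptic_transmission_problems} establishes. A further, smaller inaccuracy: your parenthetical ``$p_0>3$ when $d\le 3$'' misattributes the origin of the number $3$ --- it is the Lipschitz character of $\partial\Omega$ (the Jerison--Kenig range $3+\varepsilon$, valid in every dimension $d\ge 3$, with an even larger range for $d=2$), not the restriction $d\in\{2,3\}$, that produces it, and the cited theorem is stated for general $d$. The remaining ingredients (flat two-phase model problem solvable for all $q\in(1,\infty)$, compactness of the $\lambda$-term, duality/\v{S}ne\u{\i}berg interpolation to get an interval $(p',p)$) are plausible and essentially correct, but without a genuine argument for item (3) the claimed conclusion $p>3$ is not established.
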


In the proofs of uniqueness and a priori $L^\infty$ estimates for the regular component $u$ in the 2- and 3-term splittings of the full potential $\phi$, we use the following result due to Br{\'e}zis and Browder, which is very useful for the analysis of semilinear elliptic equations which do not have any growth conditions on the nonlinearity, such as the PBE.
 
\begin{theorem}[A property of Sobolev spaces, H. Br{\'e}zis and F. Browder, 1978, \cite{Brezis_Browder_1978_One_Property_of_Sobolev_Spaces}]\label{Thm_Brezis_Browder_for_the_extension_of_bdd_linear_functional_1978}
Let $\Omega$ be an open set in $\mathbb R^d$, $T$ a distribution such that $T\in H^{-1}(\Omega)\cap L_{loc}^1(\Omega)$, and $v\in H_0^1(\Omega)$. If there exists a function $f\in L^1(\Omega)$ such that $T(x)v(x)\ge f(x)$, a.e in $\Omega$, then $Tv\in L^1(\Omega)$ and the duality product $\langle T,v\rangle$ in $H^{-1}(\Omega)\times H_0^1(\Omega)$ coincides with $\int_{\Omega}{Tv\dd x}$.
\end{theorem}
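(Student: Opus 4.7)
The plan is to combine two ingredients: (i) the identity $\langle T, w\rangle = \int_\Omega T w \dd x$ for compactly supported bounded test functions $w \in H_0^1(\Omega) \cap L^\infty(\Omega)$, obtained by mollification, and (ii) a Stampacchia truncation scheme $v_k := T_k(v)$ together with cutoffs, through which the hypothesis $T v \geq f$ transfers into a pointwise lower bound $T v_k \geq -|f|$ that allows Fatou's lemma to drive the argument.

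First, since $T \in L_{loc}^1(\Omega)$ is a genuine function, the product $T v$ is a well-defined measurable function on $\Omega$, and the hypothesis $T v \geq f$ immediately gives $(T v)^- \leq f^- \in L^1(\Omega)$; the remaining task is to show $(T v)^+ \in L^1(\Omega)$ and that the duality pairing equals the integral. For any $w \in H_0^1(\Omega) \cap L^\infty(\Omega)$ with $\mathrm{supp}(w) \Subset \Omega$, the standard mollifications $w_\varepsilon := w * \eta_\varepsilon \in C_c^\infty(\Omega)$ satisfy $\|w_\varepsilon\|_{L^\infty} \leq \|w\|_{L^\infty}$, have supports in a common compact $\widetilde K \Subset \Omega$, and converge to $w$ both in $H_0^1(\Omega)$ and a.e.; since $\langle T, w_\varepsilon\rangle = \int_\Omega T w_\varepsilon \dd x$ holds elementarily, passing $\varepsilon \to 0$ via $H^{-1}/H_0^1$ duality on the left and dominated convergence on the right (with majorant $\|w\|_{L^\infty} |T| \chi_{\widetilde K} \in L^1(\Omega)$) yields $\langle T, w\rangle = \int_\Omega T w \dd x$ for all such $w$.

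Next I would apply Stampacchia's superposition theorem to obtain $v_k := T_k(v) \in H_0^1(\Omega) \cap L^\infty(\Omega)$ with $v_k \to v$ in $H_0^1(\Omega)$ and a.e., and introduce a family of cutoffs $\zeta_j \in C_c^\infty(\Omega)$ with $0 \leq \zeta_j \leq 1$ and $\zeta_j v_k \to v_k$ in $H_0^1(\Omega)$, so that $v_k^{(j)} := \zeta_j v_k \in H_0^1(\Omega) \cap L^\infty(\Omega)$ has compact support in $\Omega$. The key pointwise estimate $T v_k \geq -|f|$ is obtained by a sign-by-sign case analysis using $T v \geq f$: on $\{|v| \leq k\}$ it reduces to the hypothesis, while on $\{|v| > k\}$ one divides $T v \geq f$ by $|v| > k > 0$ (tracking the sign of $v$) and multiplies by $k$, checking each combination of signs of $T$ and $v$. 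Consequently $T v_k^{(j)} = \zeta_j T v_k \geq -|f|$ uniformly in $j$, and $|T v_k^{(j)}| \leq |T v_k|$. Applying the identity from the previous paragraph to $v_k^{(j)}$ gives $\langle T, v_k^{(j)}\rangle = \int_\Omega T v_k^{(j)} \dd x$; Fatou's lemma on $T v_k^{(j)} + |f| \geq 0$ combined with $\langle T, v_k^{(j)}\rangle \to \langle T, v_k\rangle$ forces $T v_k \in L^1(\Omega)$, and dominated convergence with dominant $|T v_k|$ then upgrades this to the equality $\langle T, v_k\rangle = \int_\Omega T v_k \dd x$.

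Finally, the passage $k \to \infty$ repeats the same Fatou-plus-DCT logic at the outer level: since $|T_k(v)| \leq |v|$ gives the pointwise estimate $|T v_k| \leq |T v|$, Fatou's lemma applied to $T v_k + |f| \geq 0$ together with $\langle T, v_k\rangle \to \langle T, v\rangle$ produces $T v \in L^1(\Omega)$ and $\int_\Omega T v \dd x \leq \langle T, v\rangle$, after which dominated convergence with the integrable dominant $|T v|$ promotes this to the desired equality $\langle T, v\rangle = \int_\Omega T v \dd x$. The only substantive ingredient beyond routine approximation theory is the sign-by-sign pointwise estimate $T v_k \geq -|f|$, which is elementary but does require all four sign combinations of $T$ and $v$ on $\{|v|>k\}$ to be inspected; everything else is a standard interplay of mollification, cutoffs, Stampacchia's superposition theorem, and Fatou/dominated convergence.
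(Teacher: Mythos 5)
This statement is quoted by the paper from Br\'ezis--Browder \cite{Brezis_Browder_1978_One_Property_of_Sobolev_Spaces} as an auxiliary result (Theorem~\ref{Thm_Brezis_Browder_for_the_extension_of_bdd_linear_functional_1978}); the paper itself gives no proof, only the explanatory Remark~\ref{remark_on_the_theorem_of_Brezis_and_Browder_for_the_extension_of_bdd_linear_functional_1978}, so the comparison is with the original argument. Your outline follows it closely: the mollification identity $\langle T,w\rangle=\int_\Omega Tw\,\dd x$ for bounded, compactly supported $w\in H^1_0(\Omega)$, the pointwise lower bound $T\,T_k(v)\ge -|f|$ (your sign-by-sign analysis is correct, since on $\{|v|>k\}$ one has $T\,T_k(v)=\tfrac{k}{|v|}\,Tv$ with $0<\tfrac{k}{|v|}<1$), and the Fatou-then-dominated-convergence passages in $j$ and in $k$ are all sound.

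The one step you assert without justification is the existence of cutoffs $\zeta_j\in C_c^\infty(\Omega)$ with $0\le\zeta_j\le 1$ and $\zeta_j v_k\to v_k$ in $H_0^1(\Omega)$. The theorem is stated for an \emph{arbitrary} open set $\Omega$, and there this approximation property is not routine: the naive argument (take $\varphi$ close to $v_k$ in $H^1$ and $\zeta=1$ on its support) fails because $\|\nabla\zeta\|_{L^\infty}$ blows up as the support approaches $\partial\Omega$, and the quantitative substitute $\int_\Omega |v_k|^2|\nabla\zeta_j|^2\dd x\to 0$ rests on a Hardy-type inequality, which is available for the bounded Lipschitz domains used in the body of the paper but not for general open sets; in general one is pushed into capacity-theoretic considerations of a depth comparable to the theorem itself. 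The standard repair --- and the device of the original proof --- is to take any $\varphi_n\in C_c^\infty(\Omega)$ with $\varphi_n\to v$ in $H_0^1(\Omega)$ and set $w_n:=\min(v^+,\varphi_n^+)-\min(v^-,\varphi_n^-)$. These are bounded, compactly supported in $\Omega$, converge to $v$ in $H_0^1(\Omega)$ (continuity of the lattice operations in $H^1$), and satisfy $w_n=\theta_n v$ pointwise with measurable $0\le\theta_n\le 1$, which is all your scheme ever uses from the multipliers: it gives both $Tw_n\ge -|f|$ and $|Tw_n|\le |Tv|$, while smoothness of the multiplier plays no role in the mollification step. With this replacement your Fatou/dominated-convergence argument goes through verbatim, and the intermediate truncation level $T_k(v)$ becomes unnecessary.
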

\begin{remark}\label{remark_on_the_theorem_of_Brezis_and_Browder_for_the_extension_of_bdd_linear_functional_1978}
In other words, we have the following situation (see the proof of the Theorem in \cite{Brezis_Browder_1978_One_Property_of_Sobolev_Spaces}): a locally summable function $b\in L_{loc}^1(\Omega)$ defines a bounded linear functional $T_b$ over the dense subspace $D(\Omega)\equiv C_c^\infty(\Omega)$ of $H_0^1(\Omega)$ through the integral formula $\langle T_b, \varphi\rangle =\int_{\Omega}{b\varphi \dd x}$. It is clear that the functional $T_b$ is uniquely extendable by continuity to a bounded linear functional $\overline T_b$ over the whole space $H_0^1(\Omega)$. Now the question is whether this extension is still representable by the same integral formula for any $v\in H_0^1(\Omega)$ (if the integral makes sense at all). If the function $v\in H_0^1(\Omega)$ is fixed, then Theorem \ref{Thm_Brezis_Browder_for_the_extension_of_bdd_linear_functional_1978} gives a sufficient condition for $bv$ to be summable and for the extension $\overline T_b$ evaluated at $v$ to be representable with the same integral formula as above, i.e $\langle \overline T_b, v \rangle =\int_{\Omega}{bv\dd x}$.
\end{remark}

\bibliographystyle{plain}
\bibliography{references_new_v05}
\end{document}